\documentclass[a4paper,11pt]{article}

\usepackage{amsmath}
\usepackage{amssymb}
\usepackage{amsthm}
\usepackage{amsfonts}
\usepackage{mathtools}
\usepackage{graphicx}
\usepackage{tikz,pgfplots}
\usepackage[utf8]{inputenc}
\usepackage{pgfplots}
\usepackage{tikz}
\pgfplotsset{compat=1.9}
\usepackage{subcaption}
\usepackage{caption}

\usepackage{enumerate}
\usepackage{titlesec}
\usepackage{enumitem}
\usepackage{mathtools}
\usepackage{listings}
\usepackage{bbm}
\usepackage{xcolor}
\usepackage{hyperref}
\usepackage{caption}
\usepackage{mathrsfs}
\usepackage{listings}
\usepackage{booktabs}
\usepackage{floatrow}
\newfloatcommand{capbtabbox}{table}[][\FBwidth]
\usepackage{blindtext}
\usepackage{colortbl}

\definecolor{asparagus}{rgb}{0.53, 0.66, 0.42}
\definecolor{ballblue}{rgb}{0.13, 0.67, 0.8}
\definecolor{cadmiumgreen}{rgb}{0.0, 0.42, 0.24}
\definecolor{cobalt}{rgb}{0.0, 0.28, 0.67}
\definecolor{darklavender}{rgb}{0.45, 0.31, 0.59}
\definecolor{green(pigment)}{rgb}{0.0, 0.65, 0.31}
\definecolor{blue(ncs)}{rgb}{0.0, 0.53, 0.74}
\definecolor{brandeisblue}{rgb}{0.0, 0.44, 1.0}
\definecolor{darkterracotta}{rgb}{0.8, 0.31, 0.36}
\definecolor{cobalt}{rgb}{0.0, 0.28, 0.67} 
\definecolor{ceruleanblue}{rgb}{0.16, 0.32, 0.75}
\definecolor{dodgerblue}{rgb}{0.12, 0.56, 1.0} 

\usepackage{geometry}
\geometry{verbose,a4paper,tmargin=32mm,bmargin=25mm,lmargin=23mm,rmargin=23mm}

\newtheorem{theorem}{Theorem}[section]

\newtheorem{lemma}[theorem]{Lemma}

\newtheorem{definition}[theorem]{Definition}
\newtheorem{conclusion}[theorem]{Conclusion}

\renewcommand{\Re}{\mbox{\rm Re}}
\renewcommand{\Im}{\mbox{\rm Im}}

\newcommand{\bfu}{\boldsymbol{u}}
\newcommand{\bfv}{\boldsymbol{v}}
\newcommand{\bfw}{\boldsymbol{w}}
\newcommand{\bfz}{\boldsymbol{z}}

\newcommand{\bfy}{\boldsymbol{y}}

\makeatletter
\newcommand*{\rom}[1]{\expandafter\@slowromancap\romannumeral #1@}
\makeatother

\newcommand{\eps}{\varepsilon}

\newcommand{\dx}{\hspace{2pt}\mbox{d}x}

\newcommand{\bfL}{{\mathbf L}}
\newcommand{\bfH}{{\mathbf H}}
\newcommand{\D}{\mathcal{D}}

\newcommand{\calA}[1]{\mathcal{A}(#1)}
\newcommand{\calJ}[1]{\mathcal{J}(#1)}
\newcommand{\calJsigma}[1]{\mathcal{J}_{\sigma}(#1)}
\newcommand{\C}{\mathbb{C}}
\newcommand{\R}{\mathbb{R}}
\newcommand{\N}{\mathbb{N}}
\newcommand{\conju}[1]{{\ensuremath{\overline{u}_{#1}}}}
\newcommand{\conjw}[1]{{\ensuremath{\overline{w}_{#1}}}} 
\newcommand{\conjv}[1]{{\ensuremath{\overline{v}_{#1}}}}
\newcommand{\ci}{\mathrm{i}} 
\renewcommand{\S}{\mathbb{S}}
\newcommand{\diff}{\mathop{}\!\mathrm{d}}
\newcommand{\boldC}{\boldsymbol{C}}
\newcommand{\boldM}{\boldsymbol{M}}
\newcommand{\boldU}{\boldsymbol{U}}
\newcommand{\boldV}{\boldsymbol{V}}

\newcommand{\dist}{\operatorname{dist}}

\begin{document}
	\begin{center}
		{\LARGE 
				{\LARGE 
				Nonlinear Inverse Iterations for\\ Spin--Orbit Coupled Quantum Gases.\renewcommand{\thefootnote}{\fnsymbol{footnote}}\setcounter{footnote}{0}
				\hspace{-7pt}\footnote{The authors acknowledge the support by the Deutsche Forschungsgemeinschaft (DFG, German Research Foundation) where P. Henning and L. Huynh are supported through the project grant 551527112.}}\\
	}
\end{center}

\begin{center}
	{\large Patrick Henning,\hspace{-2pt}\footnote[1]{\label{affiliation}Department of Mathematics, Ruhr-University Bochum, DE-44801 Bochum, Germany.\\ Email: \href{mailto:patrick.henning@rub.de}{patrick.henning@rub.de}, \href{mailto:Laura.Huynh@rub.de}{laura.huynh@rub.de}} \,\,
		Laura Huynh\textsuperscript{\ref{affiliation}} 
	}\\[2em]
\end{center}

\begin{abstract}
This work concerns the computation of ground states of \emph{two-component spin--orbit coupled Bose--Einstein condensates} (SO-coupled BECs), modelled by a coupled nonlinear eigenvalue problem of Gross--Pitaevskii type. Spin--orbit coupling gives rise to fascinating phenomena, including supersolid-like phases with spatially modulated densities. However, in such complex settings, conventional numerical approaches, such as generalized inverse iterations or gradient descent, often converge very slowly. To overcome this issue, we apply the concept of the $J$-method \textit{[E.~Jarlebring, S.~Kvaal, W.~Michiels. SIAM~J.~Sci.~Comput.~36-4,~2014]} to construct a nonlinear inverse iteration scheme whose convergence can be accelerated through spectral shifting, analogous to techniques used for linear eigenproblems. For a fixed shift parameter, we establish local linear convergence rates determined by spectral gaps in the neighbourhood of each quasi-unique ground state. With adaptively chosen shifts, superlinear convergence is observed, which we verify through numerical experiments.
\end{abstract}

\section{Introduction}
When bosonic gases are cooled down to temperatures near absolute zero, a fascinating exotic state of matter can be observed: the \emph{Bose--Einstein condensate}, cf.~\cite{Bos24, Ein24, PiS03}. 
In this state of matter, atoms become indistinguishable from each other and collectively behave as a single quantum entity, allowing for the observation of remarkable physical phenomena such as \emph{superfluidity} and \emph{supersolidity}, see~\cite{Li17}. 
Supersolids combine crystalline order with superfluidity, i.e.,\ frictionless flow, giving rise to complex structures in the ground-state density of the condensate. 

In this work, we are concerned with two-component \emph{spin--orbit coupled Bose--Einstein condensates} (SO-coupled BECs) as studied in~\cite{Stanescu-et-al-08,Wang-et-al-10}. 
\emph{Spin--orbit coupling} occurs when an atom's spin, i.e.,\ its intrinsic angular momentum, is coupled to its orbital motion in space. 
This effect is typically engineered using Raman lasers~\cite{SO-coupled-BEC-experiment}. 
The classical motivation for studying (Raman-induced) spin--orbit coupling in two-component BECs is to model and understand complex spin--orbit--driven phenomena such as topological phases, supersolidity, and unconventional superfluidity within a tunable quantum-gas platform that serves as a quantum simulator for solid-state systems~\cite{ZhaiSOcoupledBECs2015}.

The combination of SO-coupling and nonlinear particle interactions leads to complex energy landscapes, 
making the mathematical analysis and computation of such states challenging. 
In particular, we are interested in computing the \emph{ground states} of the energy functional \(E\) associated with the condensate. 
Mathematically, a \emph{ground state} is a minimizer of \(E\) that lies on a Riemannian Hilbert manifold, 
which corresponds to a fixed mass constraint for the condensate, and represents the most stable configuration of the system.

Owing to their stability, ground states provide a natural framework for both rigorous mathematical analysis and physical interpretation. 
Building on the analytical setting established in \cite{BC15}, the energy functional describing the SO-coupled BEC takes the form
\begin{align*}
	E(u_1,u_2) \,&:=\, \frac{1}{2}\int_{\D} \sum_{j = 1}^2 
	\Big( \frac{1}{2} |\nabla u_j|^2 + V_j(x) |u_j|^2 \Big)
	+ \frac{\delta}{2}\big(|u_1|^2 - |u_2|^2\big) + \Omega \, \Re (u_1 \conju{2}) \\
	&\quad+\, \ci k_0 \big( \conju{1}\partial_{x_1} u_1 - \conju{2}\partial_{x_1} u_2 \big)
	+ \frac{\beta_{11}}{2}|u_1|^4 + \frac{\beta_{22}}{2}|u_2|^4 + \beta_{12}|u_1|^2|u_2|^2 \, \dx,
\end{align*}	
where the tuple $(u_1,u_2)$ models the quantum states of the two components of the BEC on a computational domain $\mathcal{D}\subset \mathbb{R}^{d}$ for $d=2,3$. The functions \(V_1(x),V_2(x)\in L^\infty(\D,\mathbb{R}_{\geq 0})\) in the definition of $E$ are external, real-valued trapping potentials confining the atoms in space, while the real constant \(\delta\) depicts the detuning parameter associated with the Raman transition. The strength of the spin-orbit coupling is determined by the two real parameters \(\Omega\) and \(k_0\), which describe the Rabi frequency and the wave number of the Raman lasers respectively. Finally, the nonlinear interaction parameters \(\beta_{11},\beta_{12}, \beta_{22}\geq 0\) model intra- and inter-component interactions between the atoms. \\
	Due to the variety of physical effects, the energy landscape may contain numerous local minima, making the search for ground states challenging. 

Minimizers of \(E\) on the constraint manifold 
\begin{align*}
\S = \Bigl\{ \bfu = (u_1,u_2) \in \bfH^1_0(\D) \,\Big|\, \int_\D \big(|u_1|^2 + |u_2|^2\big)\, \dx = 1 \Bigr\}
\end{align*}
can alternatively be characterized via the corresponding Euler--Lagrange equations. 
These can be viewed as a nonlinear eigenvalue problem, which seeks an eigenvalue \(\lambda \in \mathbb{R}_{\ge 0}\) 
and an eigenfunction \(\bfu = (u_1,u_2) \in \S\) satisfying
\begin{equation}\label{so-coupling-ev-problem-strong}
	\begin{aligned}
		\lambda u_1 &= \Bigg[-\frac{1}{2}\Delta + V_1(x) + \ci k_0 \partial_{x_1} + \frac{\delta}{2} 
		+ \big(\beta_{11}|u_1|^2 + \beta_{12}|u_2|^2\big)\Bigg] u_1 + \frac{\Omega}{2} u_2, \\[0.5em]
		\lambda u_2 &= \Bigg[-\frac{1}{2}\Delta + V_2(x) - \ci k_0 \partial_{x_1} - \frac{\delta}{2} 
		+ \big(\beta_{12}|u_1|^2 + \beta_{22}|u_2|^2\big)\Bigg] u_2 + \frac{\Omega}{2} u_1.
	\end{aligned}
\end{equation}
Common iterative methods for finding minimizers can therefore approach the problem either 
from the perspective of functional minimization/optimization (in particular, based on gradient flows), 
or from the perspective of nonlinear eigenvalue problems. 
Methods of the first type are usually based on (Sobolev-)gradient descent or Riemannian optimization 
and are well studied for single-component BECs, cf.~\cite{AHYY24,APS22,AGL24,ALT17,BaoCaiSurvey,BaoDu04,CDLX23,CLYZ24,CLYZ25,DaK10,DaP17,FengTang2025,FengTangWangIMA,HauckLiangPeterseim2024,HSW21,HeP20,PHMY242,Wang22}. 
Methods of the second type often generalize the inverse power method to nonlinear settings including self-consistent field iterations, 
cf.~\cite{AHP20,CKL21,CaLeBris00,jarlebring2012,PH23,UpJaRu21} again for the single-component case. Methods that do not strictly fall into either category include, for example, Newton-type approaches \cite{APS23Newton,WuNewton17,XuXieNewton}. For a recent survey and a discussion of connections between the different approaches, see \cite{HJ25}.

Despite the extensive literature on the single-component case, the numerical computation of ground states for multi-component BECs has been studied considerably less. Systems of multi-component BECs without magnetization are considered in \cite{Bao04,101093imanumdraf046,HSY2025}, while spinor multi-component BECs with magnetization are addressed in \cite{BaoCai2018SpinorReview,BCZ13,BaoLim2008,BaW07,LiuCai21,CaiLiu21JCOMP,TiCaWuWe20,WuLiuCai25}. Two-component BECs with Josephson coupling are treated in \cite{BaoCai2011,FengLiuTang2025}, and spin--orbit coupled pseudo-spin-1/2 BECs (also including Josephson coupling) are investigated in \cite{BC15,HHP25}.
All the aforementioned works approach the problem from the perspective of gradient flows and Riemannian optimization, though the numerical realizations differ substantially in their formulation. Among these approaches, convergence proofs for the proposed numerical methods (under certain settings) have so far been established only in \cite{FengLiuTang2025,101093imanumdraf046} and, in a broader sense, in \cite{HHP25}.

In this contribution, we adopt the perspective of a nonlinear eigenvalue problem~\eqref{so-coupling-ev-problem-strong} to compute ground states of SO-coupled BECs by means of a generalized inverse iteration. To this end, we employ the so-called \emph{$J$-method}, first introduced by Jarlebring \emph{et al.}~\cite{jarlebring2012} and later extended to variational settings in~\cite{AHP20}. 
Instead of linearizing the eigenvalue problem by freezing the densities $|u_1|^2$ and $|u_2|^2$ in~\eqref{so-coupling-ev-problem-strong} at a previous approximation and then performing an inverse-iteration step with the resulting linearized operator, the $J$-method follows a different linearization strategy. This approach is based on the Jacobian of a scaling-invariant reformulation of the differential operator on the right-hand side of the eigenvalue problem~\eqref{so-coupling-ev-problem-strong}. 
As explained in~\cite{HJ25}, this construction mimics a Newton-type iteration while preserving the structure of an inverse iteration. In this way, the $J$-method overcomes the slow local convergence that is typically observed for alternative approaches in settings where the condensate is in the \emph{supersolid phase}, characterized by very thin, high-contrast stripes in the density. This numerically challenging regime occurs for large Raman recoil momentum $k_0$ and small (but non-zero) Rabi coupling $\Omega$.

In previous works, the $J$-method has been applied exclusively to rotating single-component BECs, 
whereas in this paper, we extend the approach to spin--orbit coupled pseudo-spin-$1/2$ BECs. 
As a major new step, we present a corresponding local error analysis that addresses the challenge 
posed by the fact that the linearized $J$-operator possesses a nontrivial kernel. 
This kernel arises from the global phase invariance of the energy with respect to phase shifts 
of the components $(u_1,u_2)$. As the main result of our analysis, we establish local linear convergence for the densities 
$(|u_1|^2, |u_2|^2)$, which are invariant under global phase shifts, 
and demonstrate that, as in the single-component setting, the convergence of the $J$-method 
can be arbitrarily accelerated by spectral shifting. Our error analysis is based on an auxiliary iteration that freezes the global phase, 
a technique originally developed in~\cite{PHMY242} in the context of Riemannian gradient descent methods for single-component BECs.

{\it Outline.} The paper is structured as follows.  After introducing the mathematical setting for the minimization problem in Section \ref{section:2} we proceed with presenting the \(A\)-method, a globally converging gradient descent method that will be used as a benchmark in comparison to the \(J\)-method in Section \ref{section:3}. The aforementioned \(J\)-method will be introduced in Section \ref{section:4}, along with a theorem on its locally linear convergence rate. Section \ref{section:5} will be dedicated to establishing properties of the \(J\)-operator that are needed to prove the convergence rate from Section \ref{section:4}. In Section \ref{sec:proof-loc-convergence}, we proceed with proving the local convergence rate by using Ostrowski's theorem. For this, it is necessary to formulate an auxiliary fixed-point iteration, since the canonical fixed-point iteration does not fulfill the conditions for Ostrowski's theorem. We conclude the paper with numerical experiments in Section \ref{section:7}, where we observe linear convergence for fixed spectral shifts and superlinear convergence for adaptively chosen spectral shifts.

\section{Mathematical setting and problem formulation}\label{section:2}

\subsection*{Notation}

Before introducing the mathematical setting, we first fix the basic notation that will be used throughout the paper.  
The spatial dimension is denoted by \(d\in\{2,3\}\).  
To distinguish between single- and two-component functions, we use boldface letters and adopt the identifications
\begin{align*}
\bfu = (u_1,u_2), \qquad \bfv = (v_1,v_2), \qquad \bfw = (w_1,w_2),
\end{align*}
for functions \(u_j,v_j,w_j : \D \to \C\) (\(j=1,2\)) defined on the computational domain \(\D \subset \R^d\).  
This convention will be used throughout the work without further mention.

We denote by \(L^2(\D,\C)\) and \(H^1_0(\D,\C)\) the standard complex-valued Lebesgue and Sobolev spaces, respectively.  
The corresponding spaces for two-component functions are defined as
\begin{align*}
\bfL^2(\D) := L^2(\D,\C^2)
= \bigl\{\, \bfv = (v_1,v_2) \;\big|\; v_j \in L^2(\D,\C), \; j=1,2 \,\bigr\},
\qquad
\bfH^1_0(\D) := H^1_0(\D,\C^2).
\end{align*}
The space \(\bfL^2(\D)\) is equipped with the real inner product
\begin{align*}
(\bfv,\bfw)_{\bfL^2(\D)}
:= 
\Re \int_\D \bfv \cdot \overline{\bfw} \, \dx
= \Re \int_\D (v_1 \conjw{1} + v_2 \conjw{2}) \, \dx,
\end{align*}
and induced norm $\|\bfv\|_{\bfL^2(\D)} := (\bfv,\bfv)_{\bfL^2(\D)}^{1/2}$.  
Similarly, we endow $\bfH^1_0(\D)$ with the inner product 
\begin{align*}
(\bfv,\bfw)_{\bfH^1_0(\D)}
:= \Re \int_\D \big( \nabla v_1 \cdot \nabla \conjw{1} + \nabla v_2 \cdot \nabla \conjw{2} \big) \, \dx,
\end{align*}
and corresponding norm \(\|\bfv\|_{\bfH^1_0(\D)} := (\bfv,\bfv)_{\bfH^1_0(\D)}^{1/2}\).  
This choice of \emph{real} inner products ensures that \(\bfL^2(\D)\) and \(\bfH^1_0(\D)\) are real Hilbert spaces, which is convenient for the subsequent variational and optimization formulations.\\[0.4em]
For notational simplicity, we write \(x=(x_1,x_2)\) when \(\D\subset\R^2\) and \(x=(x_1,x_2,x_3)\) when \(\D\subset\R^3\).


\subsection{Minimization problem}
\label{sec:minimization-problem}

As stated in the introduction, we are concerned with minimizing the following energy functional 
\(E:\bfH^1_0(\D)\to \mathbb{R}\) associated with a spin--orbit coupled pseudo-spin-\(1/2\) 
Bose--Einstein condensate:  
\begin{align}
\label{eq:SO-energy}
E(\bfu)
&:= \frac{1}{2}\int_{\D}\Bigg[
\underbrace{\tfrac{1}{2}(|\nabla u_1|^2 + |\nabla u_2|^2)}_{\text{kinetic}}
+ \underbrace{V_1(x)|u_1|^2 + V_2(x)|u_2|^2}_{\text{trap}}
+ \underbrace{\frac{\delta}{2}(|u_1|^2 - |u_2|^2)
+ \Omega\,\Re(u_1\bar u_2)}_{\text{spin coupling}} \\
\nonumber
&\quad
+\, \underbrace{\ci k_0 (\bar u_1\,\partial_{x_1} u_1 - \bar u_2\,\partial_{x_1} u_2)}_{\text{spin--orbit term}}
+ \underbrace{\frac{\beta_{11}}{2}|u_1|^4
+ \frac{\beta_{22}}{2}|u_2|^4
+ \beta_{12}|u_1|^2|u_2|^2}_{\text{interaction}}
\Bigg]\,\dx.
\end{align}
To ensure mass conservation, we minimize $E$ over the normalization manifold
\begin{align*}
\mathbb{S} 
:= \biggl\{
\bfu =(u_1,u_2) \in \bfH^1_0(\D) 
\;\Big|\;
\int_\D (|u_1|^2 + |u_2|^2)\, \dx = 1
\biggr\}.
\end{align*}
The corresponding minimization problem,
\begin{align}
\label{energy-minimization-problem-SO-coupled-BEC}
\bfu \;=\; \underset{ \bfv \in \mathbb{S}}{\mathrm{arg\,min}} \; E(\bfv),
\end{align}
defines a ground state of the condensate.

\medskip
\noindent
For the remainder of this work, we impose the following assumptions.

\begin{enumerate}[label={(A\arabic*)}]
\item\label{A1} 
\(\D\subset \mathbb{R}^d\) is a bounded Lipschitz domain with \(d\in\{2,3\}\).
\item\label{A2} 
\(V_j\in L^\infty(\D,\mathbb{R}_{\ge 0})\) satisfy 
\begin{align*}
V_j(x) - \frac{|\delta| + |\Omega| + 2k_0^2}{2} \ge 0
\quad \text{for } j=1,2.
\end{align*}
\item\label{A3} 
The nonlinear interaction parameters \(\beta_{11}, \beta_{12}, \beta_{22} \ge 0\) are real and nonnegative 
(corresponding to repulsive interactions).
\end{enumerate}

\medskip
\noindent
Assumption~\ref{A3} ensures that all nonlinear interactions are repulsive, i.e., the quartic terms in~\eqref{eq:SO-energy} are nonnegative.  
This prevents the energy from becoming unbounded towards $-\infty$, which could otherwise occur for strongly attractive (i.e., negative) interaction parameters.  
Hence the energy functional $E$ is bounded from below on $\mathbb{S}$, and the minimization problem \eqref{energy-minimization-problem-SO-coupled-BEC} admits at least one ground state (cf.~\cite{BC15}).

Assumption~\ref{A2} ensures that the trapping potentials \(V_j\) are sufficiently strong 
to dominate the possibly destabilizing effects of detuning~\(\delta\), 
Rabi coupling~\(\Omega\), and the spin--orbit term~\(k_0\).
Physically, this means that the external potentials confine the condensate 
and prevent delocalization caused by the coupling mechanisms.
Mathematically, \ref{A2} implies that the energy density is nonnegative and thus \(E(\bfu)\ge0\), 
as shown below.

\begin{lemma}[Positivity of the energy functional]
\label{lem:positivity-E}
Let $\bfu = (u_1,u_2) \in \bfH^1_0(\D)$, and suppose that assumptions~\ref{A1}--\ref{A3} hold.  
Then the energy functional $E$ defined in~\eqref{eq:SO-energy} is nonnegative, i.e.,
\begin{align*}
E(\bfu) \ge 0 \qquad \text{for all } \bfu \in \bfH^1_0(\D).
\end{align*}
\end{lemma}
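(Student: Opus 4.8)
The plan is to discard the manifestly nonnegative quartic interaction terms and then to show that the remaining quadratic part of the energy density is, after integration, bounded below by zero, with the trapping potentials $V_j|u_j|^2$ supplying exactly the positivity needed to absorb the detuning, Rabi, and spin--orbit contributions by virtue of assumption~\ref{A2}.

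To begin, I would observe that, although the integrand of $E$ is complex-valued pointwise (only the spin--orbit term $\ci k_0(\bar u_1\partial_{x_1}u_1 - \bar u_2\partial_{x_1}u_2)$ fails to be real), the quantity $E(\bfu)$ itself is real: integrating by parts and using $u_j\in H^1_0(\D)$ shows that $\int_\D \bar u_j\,\partial_{x_1}u_j\,\dx$ is purely imaginary, so the spin--orbit term contributes a real number. In any event, for the lower bound it suffices to estimate the pointwise real part of the integrand, since $|\Re z|\le|z|$. By assumption~\ref{A3}, the terms $\tfrac{\beta_{11}}{2}|u_1|^4$, $\tfrac{\beta_{22}}{2}|u_2|^4$ and $\beta_{12}|u_1|^2|u_2|^2$ are nonnegative and may be dropped, leaving the quadratic part to be controlled.

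Next I would bound the three coupling terms componentwise. For the spin--orbit term, Cauchy--Schwarz followed by Young's inequality gives $|\Re(\ci k_0\,\bar u_j\,\partial_{x_1}u_j)|\le|k_0|\,|u_j|\,|\nabla u_j|\le\tfrac14|\nabla u_j|^2 + k_0^2|u_j|^2$, so a quarter of the kinetic density absorbs the gradient contribution and only $-k_0^2|u_j|^2$ survives. For the detuning term, $\tfrac{\delta}{2}(|u_1|^2-|u_2|^2)\ge-\tfrac{|\delta|}{2}(|u_1|^2+|u_2|^2)$, and for the Rabi term, $\Omega\,\Re(u_1\bar u_2)\ge-|\Omega|\,|u_1||u_2|\ge-\tfrac{|\Omega|}{2}(|u_1|^2+|u_2|^2)$. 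Collecting everything, the quadratic part of the integrand is bounded below by $\tfrac14(|\nabla u_1|^2+|\nabla u_2|^2) + \sum_{j=1}^{2}\bigl(V_j-\tfrac{|\delta|+|\Omega|+2k_0^2}{2}\bigr)|u_j|^2$, which is pointwise nonnegative by assumption~\ref{A2}. Integrating over $\D$ and multiplying by $\tfrac12$ yields $E(\bfu)\ge0$.

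The only genuine obstacle, and it is a mild one, is the spin--orbit term, the sole term coupling a derivative to the unknown: one must split its contribution between the kinetic and potential ``reservoirs'' with the right weight (here Young's inequality with constant $\tfrac14$) so that the leftover $k_0^2|u_j|^2$ is precisely covered by the $2k_0^2/2$ margin built into~\ref{A2}. An equivalent and slightly sharper route is to complete the square, $\tfrac12|\nabla u_j|^2+\Re(\ci k_0\bar u_j\partial_{x_1}u_j)=\tfrac12\bigl|(-\ci\nabla-k_0\mathbf{e}_1)u_j\bigr|^2-\tfrac{k_0^2}{2}|u_j|^2$ with $\mathbf{e}_1$ the first coordinate unit vector, which bypasses Young's inequality altogether but rests on the same mechanism.
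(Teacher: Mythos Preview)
Your proof is correct and follows essentially the same approach as the paper: both drop the nonnegative quartic terms via~\ref{A3}, bound the detuning and Rabi terms by $-\tfrac{|\delta|+|\Omega|}{2}(|u_1|^2+|u_2|^2)$, and use Young's inequality on the spin--orbit term to sacrifice a quarter of the kinetic energy against $k_0^2|u_j|^2$, after which~\ref{A2} yields pointwise nonnegativity of the remaining integrand. The completing-the-square remark at the end is a nice alternative not mentioned in the paper, but the core argument is the same.
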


\begin{proof}
Expanding the definition of $E$ and using
$|\Re(u_1\bar u_2)| \le \tfrac{1}{2}(|u_1|^2 + |u_2|^2)$,
we estimate the coupling and detuning terms as
\begin{align*}
\frac{\delta}{2}(|u_1|^2 - |u_2|^2)
+ \Omega\,\Re(u_1\bar u_2)
\ge
-\frac{|\delta| + |\Omega|}{2} (|u_1|^2 + |u_2|^2).
\end{align*}
Similarly, by Young's inequality,
\begin{align*}
\Re\big(\ci k_0 (\bar u_1\,\partial_{x_1} u_1 - \bar u_2\,\partial_{x_1} u_2)\big)
\ge -k_0^2(|u_1|^2 + |u_2|^2)
- \tfrac{1}{4}(|\partial_{x_1} u_1|^2 + |\partial_{x_1} u_2|^2).
\end{align*}
Inserting these bounds into~\eqref{eq:SO-energy} yields
\begin{align*}
E(\bfu)
&\ge \frac{1}{2}\int_{\D}
\sum_{j=1}^2
\Big(
\frac{1}{4}|\nabla u_j|^2
+ \Big( V_j(x) - \frac{|\delta| + |\Omega| + 2k_0^2}{2} \Big) |u_j|^2
+ \frac{\beta_{jj}}{2}|u_j|^4
\Big)
+ \beta_{12}|u_1|^2|u_2|^2 \, \dx.
\end{align*}
Under assumption~\ref{A2}, the coefficient of $|u_j|^2$ is nonnegative, and by~\ref{A3} all quartic terms are nonnegative as well.  
Hence each integrand is nonnegative a.e., implying \(E(\bfu)\ge 0\).
\end{proof}

Positivity of $E$ is, however, not essential for the existence or structure of the ground state.  
Since the minimization problem \eqref{energy-minimization-problem-SO-coupled-BEC} is invariant under constant energy shifts, one may always replace $E(\bfu)$ by
$E(\bfu)+C\,\|\bfu\|_{\bfL^2(\D)}^2$ for any $C\in\R$ without changing the minimizer on $\mathbb{S}$.  
Indeed, $\|\bfu\|_{\bfL^2(\D)}^2=1$ for $\bfu\in\mathbb{S}$, so this modification adds the same constant to all admissible energies.  
From a physical point of view, this reflects that only energy differences, not absolute values, are observable.

\medskip
\noindent
\textbf{Phase-rotation invariance.}
To complement the discussion of existence, we note that the ground state is not unique. If $\bfu$ minimizes $E$, then for any $\omega \in \R$ the phase-shifted function 
$\exp(\ci\omega)\bfu$ is also a minimizer with the same energy, $E(\exp(\ci\omega)\bfu) = E(\bfu)$.  
Thus the ground state is at most \emph{unique only up to a global phase rotation}.  
All such states share the same physical densities $|u_1|^2$ and $|u_2|^2$, which are the observable quantities.


\subsection{First and second order conditions for minimizers}
\label{subsection-first-second-order-conditions}

Minimizers $\bfu \in \mathbb{S}$ of the energy functional $E$ under the normalization $\| \bfu \|_{L^2(\D)}^2=1$ satisfy the usual first- and second-order optimality conditions. In the following, we first present these conditions in variational form and then introduce a convenient linearized operator.

\medskip
\noindent
\textbf{First-order (Euler--Lagrange) condition.}  
Introducing a Lagrange multiplier $\lambda\in\mathbb{R}$ for the mass constraint, we define the Lagrangian
$$
\mathscr{L}(\bfu,\lambda) := E(\bfu) - \frac{\lambda}{2} \Big( (\bfu,\bfu)_{\mathbf{L}^2(\D)} - 1 \Big).
$$
A necessary condition for $\bfu \in \mathbb{S}$ to be a critical point of $E$ on $\mathbb{S}$ is that the first variation vanishes:
\begin{align}
\label{necessary-cond-min}
\langle E'(\bfu), \bfv \rangle = \lambda\, (\bfu, \bfv)_{\mathbf{L}^2(\D)}
\qquad \text{for all } \bfv \in \mathbf{H}^1_0(\D).
\end{align}
This is a nonlinear eigenvalue problem, commonly referred to as the \emph{spin-orbit coupled Gross--Pitaevskii equation} (SO-GPE).

\medskip
\noindent
\textbf{Second-order condition.}  
Let the tangent space to $\mathbb{S}$ at $\bfu$ be
$$
T_{\bfu}\mathbb{S} := \{ \bfv \in \mathbf{H}^1_0(\D) \;|\; (\bfu,\bfv)_{\mathbf{L}^2(\D)} = 0 \}.
$$
A necessary condition for $\bfu$ to be a (local) minimizer is that the second variation of the Lagrangian is nonnegative on the tangent space:
$$
\langle E''(\bfu) \bfv, \bfv \rangle - \lambda (\bfv,\bfv)_{\mathbf{L}^2(\D)} \ge 0
\qquad \text{for all } \bfv \in T_{\bfu}\mathbb{S}.
$$
In other words, the constrained Hessian is positive semi-definite on $T_{\bfu}\mathbb{S}$.

\medskip
\noindent
Recall that the energy functional $E$ is invariant under global phase rotations:
\begin{align}
\label{phase-shifts-E}
E(\exp(\ci \,\omega)\bfu) = E(\bfu) \qquad \text{for all } \omega \in \mathbb{R}.
\end{align}
Hence, differentiating with respect to $\omega$ at $\omega = 0$ shows that the direction $\ci \bfu \in T_{\bfu}\mathbb{S}$ is always a neutral direction for the second variation:
\begin{align}\label{secE-iu-ev}
\langle E''(\bfu)\, [\ci \bfu], \bfv \rangle - \lambda (\ci \bfu, \bfv)_{\mathbf{L}^2(\D)} = 0
\qquad \text{for all } \bfv \in T_{\bfu}\mathbb{S}.
\end{align}
This is the variational manifestation of the $\mathrm{U}(1)$ phase symmetry.

In the light of the above discussion, a local minimizer $\bfu \in \mathbb{S}$ of $E$ is called 
\emph{quasi-isolated} (cf.~\cite{HY25MathComp,PHMY242}) if it satisfies the following condition:
\begin{enumerate}[resume,label={(A\arabic*)}]
\item\label{A4} The constrained Hessian of $E$ at $\bfu$ is strictly positive 
on the subspace orthogonal to both $\bfu$ and $\ci\bfu$, i.e.,
\begin{align*}
\langle E''(\bfu)\bfv,\bfv\rangle - \lambda(\bfv,\bfv)_{\bfL^2(\D)} > 0
\qquad \text{for all } \bfv \in T_{\bfu}\mathbb{S} \cap T_{\ci\bfu}\mathbb{S}.
\end{align*}
\end{enumerate}
This means that the only neutral direction of the second variation is the phase direction $\ci\bfu$, 
so $\bfu$ is locally isolated as a minimizer on $\mathbb{S}$ up to the global phase symmetry.

\medskip
\noindent
\textbf{Linearized operator $\calA{\bfu}$.}  
It is convenient to introduce a linearized operator of $E'$ at $\bfu$, denoted by $\calA{\bfu}$, which is \emph{real-scaling invariant} in the sense that $\calA{\alpha \bfu} = \calA{\bfu}$ for all $\alpha \in \mathbb{R}\setminus\{0\}$.  For $\bfu, \bfv, \bfw \in \mathbf{H}^1_0(\D)$, we define
\begin{align}
\nonumber\langle \calA{\bfu} \bfv , \bfw \rangle
&= \Re \int_\D \sum_{j=1}^2 \Big( \frac12 \nabla v_j \cdot \nabla \conjw{j} + V_j v_j \conjw{j} \Big)
+ \frac{\delta}{2} (v_1 \conjw{1} - v_2 \conjw{2}) + \frac{\Omega}{2} (v_1 \conjw{2} + v_2 \conjw{1}) \, dx \\
\label{def-Lu}&\quad + \Re \int_\D \ci k_0 (\conjw{1}\partial_{x_1} v_1 - \conjw{2} \partial_{x_1} v_2) \, dx \\
\nonumber&\quad + \Re \int_\D \sum_{j=1}^2 \frac{\hspace{-23pt}\beta_{jj}}{\|\bfu\|_{\mathbf{L}^2(\D)}^2} |u_j|^2 v_j \conjw{j}
+ \frac{\hspace{-23pt}\beta_{12}}{\|\bfu\|_{\mathbf{L}^2(\D)}^2} (|u_2|^2 v_1 \conjw{1} + |u_1|^2 v_2 \conjw{2}) \, dx.
\end{align}
One checks directly that for any real \(\alpha\neq0\) the right-hand side is unchanged when \(\bfu\mapsto\alpha\bfu\), so the operator is indeed real-scaling invariant in \(\bfu\). This (artificial) scaling invariance of $\calA{\bfu}$ will be crucial for the construction of the $J$-method.

With the definition of $\calA{\bfu}$, one also verifies that, for each $\bfu \in \mathbb{S}$, it holds
$$
\langle E'(\bfu), \bfv \rangle = \langle \calA{\bfu} \bfu, \bfv \rangle
\qquad \text{for all } \bfv \in \mathbf{H}^1_0(\D),
$$
so that the first-order condition (SO-GPE) can be compactly rewritten as
$$
\langle \calA{\bfu} \bfu, \bfv \rangle = \lambda (\bfu, \bfv)_{\mathbf{L}^2(\D)}.
$$
Finally, for $\bfu \in \mathbb{S}$, the second derivative of $E$ in directions $\bfv, \bfw \in \mathbf{H}^1_0(\D)$ can be expressed as
\begin{align}\label{sec-der-E}
\nonumber \langle E''(\bfu) \bfv , \bfw \rangle \,\,=\,\, &\langle \calA{\bfu} \bfv , \bfw \rangle 
\,\, +\,\, 2 \, \Re \int_\D \sum_{j=1}^2 \beta_{jj} \Re(u_j \bar v_j) u_j \conjw{j}\, dx \\
 &\quad + 2 \, \Re \int_\D \beta_{12} \left(\Re(u_1 \bar v_1) u_2 \conjw{2} + \Re(u_2 \bar v_2) u_1 \conjw{1}\right) \, dx.
\end{align}
The first term corresponds to the linearized operator \(\calA{\bfu}\), which is real-scaling invariant, while the remaining terms account for the nonlinear intra- and inter-component quartic interactions.

\medskip
\noindent
\textbf{Quotient metric modulo global phase.}
Due to the invariance~\eqref{phase-shifts-E} and the identical densities
$|\exp(\ci\,\omega)\bfu|^2 = |\bfu|^2$,
it is natural to measure errors in the quotient metric induced by global phase rotations.  
With $S^1=\{\,\exp(\ci\,\omega)\;|\;\omega \in \R\,\}$ denoting the group of complex scalars of modulus one, we define for $\bfu,\bfv\in\mathbb{S}$
\begin{align}
\label{quotient-metric}
\dist_{S^1}^{\bfH^1}(\bfu,\bfv)
:= \min_{\omega \in \R}\;\|\exp(\ci\,\omega)\bfu-\bfv\|_{\bfH^1(\D)}.
\end{align}
This induces a well-defined metric on the quotient manifold $\mathbb{S}/S^1$, i.e., on normalized states modulo global phase rotations. 
Since $(\cdot,\cdot)_{\bfL^2(\D)}$ is real-valued, the minimizing phase $\omega_*$ satisfies
\begin{align*}
\omega_* = \arg\max_{\omega\in\R}\;(\exp(\ci\,\omega)\bfu,\bfv)_{\bfL^2(\D)}
= \arg\max_{\omega\in\R}\;\Re\int_{\D} 
\big(u_1\conjv{1}+u_2\conjv{2}\big)\,e^{\ci\,\omega}\,\dx,
\end{align*}
that is, $\omega_*$ aligns $\bfu$ with $\bfv$ in the sense of maximizing their real-valued correlation.  
Equivalently, one may write
\begin{align*}
\dist_{S^1}^{\bfH^1}(\bfu,\bfv)^2
= \|\bfu\|_{\bfH^1(\D)}^2 + \|\bfv\|_{\bfH^1(\D)}^2
- 2\max_{\omega\in\R}(\exp(\ci\,\omega)\bfu,\bfv)_{\bfH^1(\D)}.
\end{align*}

\section{The $A$-method}\label{section:3}
Before we turn to the $J$-method, we first discuss how the canonical (nonlinear) inverse iteration can be applied to the spin-orbit coupled Gross--Pitaevskii equation (SO-GPE). We refer to this classical approach as the \emph{$A$-method}.  

Recall from Section~\ref{subsection-first-second-order-conditions} that we are seeking an eigenfunction $\bfu \in \mathbb{S}$ and a corresponding eigenvalue $\lambda \in \mathbb{R}$ such that
$$
\langle \calA{\bfu} \bfu, \bfv \rangle = \lambda (\bfu, \bfv)_{\mathbf{L}^2(\D)} 
\qquad \text{for all } \bfv \in \bfH^1_0(\D).
$$
The continuity of the operator $\calA{\bfu}$ for each $\bfu \in \mathbb{S}$ follows directly from the Sobolev embedding. Furthermore, $\calA{\bfu}$ is coercive on $\bfH^1_0(\D)$, which guarantees the existence of a continuous inverse
$$
\calA{\bfu}^{-1} : \bfH^{-1}(\D) \to \bfH^1_0(\D).
$$
This follows from the next lemma, which is essentially a consequence of \cite[Conclusion 4.3]{HHP25}.

\begin{lemma}[Coercivity of $\calA{\bfu}$]
\label{lemma-coercivity-Au}
Assume \ref{A1}--\ref{A3} and let $\bfu \in \mathbb{S}$ be arbitrary. Then,
\begin{align*}
\langle \calA{\bfu} \bfv , \bfv \rangle \ge \frac{1}{4} \, \| \bfv \|_{\bfH^1(\D)}^2
\qquad \mbox{for all } \bfv \in \bfH^1_0(\D).
\end{align*}
\end{lemma}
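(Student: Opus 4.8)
The plan is to test the bilinear form against \(\bfw=\bfv\) in~\eqref{def-Lu} and to estimate the indefinite contributions from below exactly as in the proof of Lemma~\ref{lem:positivity-E}. Substituting \(\bfw=\bfv\) produces a real quadratic form whose structure mirrors the energy density in~\eqref{eq:SO-energy}, the only difference being that the quartic self-interaction terms \(\tfrac{\beta_{jj}}{2}|v_j|^4\) and \(\beta_{12}|v_1|^2|v_2|^2\) are replaced by the frozen, scaling-invariant densities \(\tfrac{\beta_{jj}}{\|\bfu\|_{\bfL^2(\D)}^2}|u_j|^2|v_j|^2\) and \(\tfrac{\beta_{12}}{\|\bfu\|_{\bfL^2(\D)}^2}\bigl(|u_2|^2|v_1|^2+|u_1|^2|v_2|^2\bigr)\). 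By assumption~\ref{A3} these are pointwise nonnegative, so I would simply discard them in the lower bound.

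For the remaining indefinite terms I would reuse the two estimates from Lemma~\ref{lem:positivity-E}. The bound \(|\Re(v_1\bar v_2)|\le\tfrac12(|v_1|^2+|v_2|^2)\) gives
\[
\tfrac{\delta}{2}(|v_1|^2-|v_2|^2)+\tfrac{\Omega}{2}(v_1\bar v_2+v_2\bar v_1)\ \ge\ -\tfrac{|\delta|+|\Omega|}{2}\,(|v_1|^2+|v_2|^2),
\]
and Young's inequality applied to the spin--orbit term in the form \(k_0|v_j|\,|\partial_{x_1}v_j|\le k_0^2|v_j|^2+\tfrac14|\partial_{x_1}v_j|^2\) yields
\[
\Re\bigl(\ci k_0(\bar v_1\partial_{x_1}v_1-\bar v_2\partial_{x_1}v_2)\bigr)\ \ge\ -k_0^2(|v_1|^2+|v_2|^2)-\tfrac14\bigl(|\partial_{x_1}v_1|^2+|\partial_{x_1}v_2|^2\bigr).
\]
This particular choice of the Young parameter is what leaves exactly one quarter of the gradient on the right-hand side, and is the origin of the constant \(\tfrac14\) in the claim.

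Combining these bounds with the (nonnegative) kinetic and trapping terms, I would arrive at
\[
\langle \calA{\bfu}\bfv,\bfv\rangle\ \ge\ \int_\D\sum_{j=1}^2\Bigl(\tfrac12|\nabla v_j|^2-\tfrac14|\partial_{x_1}v_j|^2+\bigl(V_j-\tfrac{|\delta|+|\Omega|+2k_0^2}{2}\bigr)|v_j|^2\Bigr)\diff x.
\]
Since \(|\partial_{x_1}v_j|^2\le|\nabla v_j|^2\), the gradient part is bounded below by \(\tfrac14\sum_j|\nabla v_j|^2\); by assumption~\ref{A2} the coefficient of \(|v_j|^2\) is nonnegative, so the zeroth-order part may be dropped. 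What remains is \(\langle\calA{\bfu}\bfv,\bfv\rangle\ge\tfrac14\int_\D(|\nabla v_1|^2+|\nabla v_2|^2)\diff x=\tfrac14\|\bfv\|_{\bfH^1(\D)}^2\).

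I do not expect a genuine obstacle here: this is essentially the computation of Lemma~\ref{lem:positivity-E} performed with the scaling-invariant densities of \(\calA{\bfu}\) in place of the quartic terms, and the coercivity itself also follows from \cite[Conclusion~4.3]{HHP25}. The only point I would watch carefully is the bookkeeping of constants — the choice of Young parameter together with the elementary inequality \(|\partial_{x_1}v_j|\le|\nabla v_j|\) — so that the explicit constant \(\tfrac14\) comes out exactly as stated.
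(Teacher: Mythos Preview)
Your proposal is correct and is exactly the natural argument: the paper itself does not give a proof but simply cites \cite[Conclusion~4.3]{HHP25}, and your derivation reproduces that result by reusing the pointwise estimates from Lemma~\ref{lem:positivity-E} with the frozen nonnegative densities in place of the quartic terms. The bookkeeping you flag (Young parameter and $|\partial_{x_1}v_j|^2\le|\nabla v_j|^2$) is indeed what yields the constant $\tfrac14$, and your identification $\|\bfv\|_{\bfH^1(\D)}^2=\int_\D(|\nabla v_1|^2+|\nabla v_2|^2)\diff x$ matches the paper's convention for the $\bfH^1_0$-inner product.
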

\noindent
Hence, by the Lax--Milgram theorem, the inverse $\calA{\bfu}^{-1}$ exists and is continuous. 
While not required for the existence of minimizers, assumption~\ref{A2} is crucial for the coercivity of $\calA{\bfu}$.

\noindent
To formalize the nonlinear inverse iteration, we introduce the Riesz-type identification
$$
\mathcal{I} \bfw := (\bfw, \cdot)_{\mathbf{L}^2(\D)} \in \bfH^{-1}(\D)
\quad \text{for any } \bfw \in \bfH^1_0(\D).
$$
Then, for a given $\bfw \in \bfH^1_0(\D)$, we define
$$
\bfz := \calA{\bfu}^{-1} \mathcal{I} \bfw \in \bfH^1_0(\D)
$$
as the unique solution to
\begin{align*}
\langle \calA{\bfu} \, \bfz , \bfv \rangle = (\bfw, \bfv)_{\mathbf{L}^2(\D)} 
\qquad \text{for all } \bfv \in \bfH^1_0(\D).
\end{align*}
This operator is the cornerstone of the $A$-method, which iteratively approximates eigenpairs $(\bfu,\lambda)\in \S\times\mathbb{R}$ of the SO-GPE by solving a sequence of linear problems involving $\calA{\bfu}$:
\begin{definition}[$A$-method]
Let $\bfu^{0} \in \mathbb{S}$ denote an initial guess.  
Then the iterates of the $A$-inverse iteration are defined for $n \ge 0$ by
\begin{align}
\label{A-iteration}
\bfu^{n+1} 
:= 
\frac{ \hspace{-24pt}\calA{\bfu^n}^{-1} \mathcal{I} \bfu^n}
{\|  \calA{\bfu^n}^{-1} \mathcal{I} \bfu^n \|_{\mathbf{L}^2(\D)} }.
\end{align}
\end{definition}

\noindent
Each iteration step \eqref{A-iteration} thus requires the solution of one linear problem to compute $\calA{\bfu^n}^{-1} \mathcal{I} \bfu^n$.  
By construction, the normalization in \eqref{A-iteration} ensures $\bfu^{n} \in \mathbb{S}$ for all $n \in \mathbb{N}$.  
Consequently, the normalization factors $\| \bfu \|_{\mathbf{L}^2(\D)}$ appearing in the definition of $\calA{\bfu}$ in \eqref{def-Lu} are constant (equal to one) along the iteration and hence \emph{inactive}.

In order to ensure global convergence of the $A$-method to an eigenfunction and to accelerate its convergence, the $A$-method can be improved by introducing a damping mechanism. In this variant, the next iterate $\bfu^{n+1}$ is obtained as a suitable combination of the previous approximation $\bfu^n$ and the standard iterate $\calA{\bfu^n}^{-1} \mathcal{I} \bfu^n$. Both convex combinations and over-relaxations are admissible. The following definition makes this precise.

\begin{definition}[Damped $A$-method]
For an initial guess $\bfu^{0} \in \mathbb{S}$ and a sequence of positive damping parameters $(\tau_n)_{n\in \mathbb{N}} \subset [\varepsilon,2)$ (bounded away from zero for some small $\varepsilon>0$), the iterates of the damped $A$-inverse iteration are defined for $n \ge 0$ by
\begin{align}
\label{damped-A-iteration}
\bfu^{n+1} 
:= 
\frac{ \hspace{-24pt}(1-\tau_n) \bfu^n + \tau_n \, \gamma_n \, \calA{\bfu^n}^{-1} \mathcal{I} \bfu^n }
{\big\| (1-\tau_n) \bfu^n + \tau_n\, \gamma_n \, \calA{\bfu^n}^{-1} \mathcal{I} \bfu^n \big\|_{\mathbf{L}^2(\D)} },
\end{align}
where 
$$
\gamma_n := \big( \calA{\bfu^n}^{-1}\mathcal{I} \bfu^n , \bfu^n \big)_{\mathbf{L}^2(\D)}^{-1}.
$$
\end{definition}

\noindent
The factor $\gamma_n$ is chosen such that the update direction
$$
\mathbf{d}^n := - \bfu^n + \gamma_n \, \calA{\bfu^n}^{-1} \mathcal{I} \bfu^n
$$
is tangent to the constraint manifold $\mathbb{S}$, i.e., $\mathbf{d}^n \in T_{\bfu^n}\mathbb{S}$, and it constitutes a descent direction for the energy. Hence, \eqref{damped-A-iteration} can be equivalently written as a Riemannian gradient-type iteration on $\mathbb{S}$,
\begin{align*}
\bfu^{n+1} 
= 
\frac{ \hspace{-24pt}\bfu^n + \tau_n \, \mathbf{d}^n }
{\| \bfu^n + \tau_n \, \mathbf{d}^n  \|_{\mathbf{L}^2(\D)} }.
\end{align*}
This is precisely the perspective taken in \cite{HHP25}, where \eqref{damped-A-iteration} is derived (analogously to \cite{HeP20}) as a Sobolev gradient descent method with an energy-adaptive metric. In this framework, the vector $\mathbf{d}^n$ arises as the projection of a Sobolev gradient onto the tangent space $T_{\bfu^n}\mathbb{S}$. 
Note that $\calA{\bfu}$ is defined in \cite{HHP25} without the explicit real-scaling normalization; however, since $\bfu^n \in \mathbb{S}$ for all $n$, both formulations coincide on the constraint manifold. 
For a general discussion of the connections between damped inverse iteration and gradient descent approaches, we refer to the review \cite{HJ25}.

By exploiting these connections (in particular, that $\mathbf{d}^n \in T_{\bfu^n}\mathbb{S}$ is a descent direction), it can be shown that the damped inverse iteration converges \emph{globally} to an eigenfunction of the SO-GPE. The following theorem is established in \cite[Theorem~4.5]{HHP25}.

\begin{theorem}\label{theorem:energy_diss-SO-BEC}
Assume \ref{A1}--\ref{A3} and consider the iterates of the damped $A$-method \eqref{damped-A-iteration} for a sequence of damping parameters $\tau_n$ and a starting value $\bfu^0 \in \mathbb{S}$. Then there exists a step-size interval $[\varepsilon, \tau_{\max}] \subset (0,2)$ such that if $\tau_n \in [\varepsilon, \tau_{\max}]$ for all $n$, the iterates $\bfu^n \in \mathbf{H}^1_0(\D)$ satisfy:
\begin{enumerate}[label={(\roman*)}]
   \item\label{enum:1} The energy decreases in each iteration. In particular, there exists \(C_\tau>0\) such that 
   $$
   E(\bfu^n)- E(\bfu^{n+1})\geq C_\tau \| \bfu^{n+1} - \bfu^n\|_{\mathbf{H}^1(\D)}^2.
   $$
   \item\label{enum:2} The energy converges to a limit value, i.e., 
   $$
   \lim_{n\to \infty}E(\bfu^n) = E_0 \in \mathbb{R}_{\ge 0}.
   $$
   \item \label{enum:3}The sequence $\{\bfu^n\}_{n\in\mathbb{N}}$ possesses a subsequence \( \{ \bfu^{n_j}\}_{j\in \mathbb{N}}\) that converges strongly in $\mathbf{H}^1_0(\D)$ to an eigenfunction $\bfu\in \mathbb{S}$ of the SO-GPE, i.e.,
   $$
   \lim_{j\to\infty} \| \bfu^{n_j} - \bfu\|_{\mathbf{H}^1(\D)} = 0,
   $$
   and $\bfu$ satisfies
   $$
   \langle \calA{\bfu} \bfu, \bfv \rangle = \lambda (\bfu, \bfv)_{\mathbf{L}^2(\D)} 
   \qquad \text{for all } \bfv \in \mathbf{H}^1_0(\D),
   $$
   for the eigenvalue 
   $$
   \lambda :=  \langle E^{\prime}(\bfu) , \bfu \rangle = \lim_{j\rightarrow \infty} \big( \calA{\bfu^{n_j}}^{-1} \mathcal{I}\bfu^{n_j}, \bfu^{n_j} \big)_{\mathbf{L}^2(\D)}^{-1}.
   $$
   If $\bfu \in \S$ is an isolated local minimizer in the sense of \ref{A4}, then the full sequence $\bfu^n$ converges to $\bfu$ in the quotient metric \eqref{quotient-metric}.
\end{enumerate}
\end{theorem}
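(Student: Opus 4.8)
The iteration \eqref{damped-A-iteration} is a Riemannian gradient descent for $E$ on $\mathbb{S}$ with the energy-adaptive metric induced by $\calA{\bfu^n}$, so the plan is to run the standard energy-dissipation/compactness programme and then, at the end, to accommodate the global phase symmetry. \emph{Step 1 (the quantitative descent estimate \ref{enum:1}).} I would first confirm that $\mathbf{d}^n=-\bfu^n+\gamma_n\calA{\bfu^n}^{-1}\mathcal{I}\bfu^n$ is a descent direction tangent to $\mathbb{S}$: writing $\bfz^n:=\calA{\bfu^n}^{-1}\mathcal{I}\bfu^n$, the symmetry of $\calA{\bfu^n}$ together with $\bfu^n\in\mathbb{S}$ gives $\langle\calA{\bfu^n}\bfz^n,\bfu^n\rangle=1$ and $\langle\calA{\bfu^n}\bfz^n,\bfz^n\rangle=\gamma_n^{-1}$, hence $\mathbf{d}^n\in T_{\bfu^n}\mathbb{S}$ and, using $\langle E'(\bfu),\bfv\rangle=\langle\calA{\bfu}\bfu,\bfv\rangle$ on $\mathbb{S}$, one finds the identity $\langle E'(\bfu^n),\mathbf{d}^n\rangle=-\langle\calA{\bfu^n}\mathbf{d}^n,\mathbf{d}^n\rangle\le-\tfrac14\|\mathbf{d}^n\|_{\bfH^1(\D)}^2$ by Lemma~\ref{lemma-coercivity-Au}. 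A second-order Taylor expansion of $E$ along the $\bfL^2$-retraction $\tau\mapsto(\bfu^n+\tau\mathbf{d}^n)/\|\bfu^n+\tau\mathbf{d}^n\|_{\bfL^2(\D)}$, combined with the local Lipschitz continuity of $E'$ on $\bfH^1$-bounded sets (which follows from the explicit formula \eqref{sec-der-E} for $E''$ and the compact embedding $\bfH^1_0(\D)\hookrightarrow\bfL^4(\D)$ valid for $d\in\{2,3\}$), then yields $E(\bfu^{n+1})\le E(\bfu^n)-c\,\tau_n(1-C\tau_n)\|\mathbf{d}^n\|_{\bfH^1(\D)}^2$ with constants depending only on $E(\bfu^0)$; fixing $\tau_{\max}$ small enough and using $\|\bfu^{n+1}-\bfu^n\|_{\bfH^1(\D)}\lesssim\tau_n\|\mathbf{d}^n\|_{\bfH^1(\D)}$ gives \ref{enum:1}. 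This is essentially the construction of \cite{HeP20,HHP25}.

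\emph{Step 2--3 (energy limit, a priori bounds, identification of the limit).} By \ref{enum:1}, $E(\bfu^n)$ is non-increasing and bounded below by $0$ (Lemma~\ref{lem:positivity-E}), hence converges to some $E_0\ge0$, proving \ref{enum:2}; telescoping \ref{enum:1} also gives $\sum_n\|\bfu^{n+1}-\bfu^n\|_{\bfH^1(\D)}^2<\infty$, so $\|\bfu^{n+1}-\bfu^n\|_{\bfH^1(\D)}\to0$. The lower bound $E(\bfv)\ge\tfrac18\|\bfv\|_{\bfH^1(\D)}^2$ (read off from the proof of Lemma~\ref{lem:positivity-E}) and $E(\bfu^n)\le E(\bfu^0)$ yield a uniform $\bfH^1_0(\D)$-bound, so along a subsequence $\bfu^{n_j}\rightharpoonup\bfu$ in $\bfH^1_0(\D)$ and, by Rellich compactness on the bounded Lipschitz domain $\D$, strongly in $\bfL^2(\D)$ and $\bfL^4(\D)$; the same then holds for $\bfu^{n_j+1}$. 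One checks via Lemma~\ref{lemma-coercivity-Au} that $\mathbf{d}^n$ and $\gamma_n$ stay bounded, that $\mathbf{d}^{n}\to0$ in $\bfH^1(\D)$ (this uses $\|\bfu^{n+1}-\bfu^n\|_{\bfH^1(\D)}\to0$, the normalization identity for the retraction, and $\tau_n\ge\varepsilon$), and passes to a further subsequence with $\gamma_{n_j}\to\lambda$. Rewriting the step as $\langle\calA{\bfu^{n_j}}(\gamma_{n_j}\bfz^{n_j}),\bfv\rangle=\gamma_{n_j}(\bfu^{n_j},\bfv)_{\bfL^2(\D)}$ with $\gamma_{n_j}\bfz^{n_j}=\bfu^{n_j}+\mathbf{d}^{n_j}\to\bfu$ in $\bfH^1(\D)$, and using that the lower-order and Sobolev-subcritical nonlinear coefficients of $\calA{\bfu^{n_j}}$ converge strongly, one passes to the limit to obtain $\langle\calA{\bfu}\bfu,\bfv\rangle=\lambda(\bfu,\bfv)_{\bfL^2(\D)}$ for all $\bfv\in\bfH^1_0(\D)$. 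Testing against $\bfv=\gamma_{n_j}\bfz^{n_j}$ and invoking coercivity once more upgrades weak to strong convergence, so $\bfu^{n_j}\to\bfu$ in $\bfH^1_0(\D)$, whence $\bfu\in\mathbb{S}$ and $\lambda=\langle\calA{\bfu}\bfu,\bfu\rangle=\langle E'(\bfu),\bfu\rangle=\lim_j\gamma_{n_j}$, which is \ref{enum:3}.

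\emph{Step 4 (full-sequence convergence under \ref{A4}).} Here the invariance \eqref{phase-shifts-E} means the limit is determined only up to the orbit $S^1\cdot\bfu$, so convergence must be measured in the quotient metric \eqref{quotient-metric}. The plan is to show that once one subsequence enters a small quotient-ball around $\bfu$ in which, by \ref{A4}, the only critical points of $E$ on $\mathbb{S}$ are phase rotations of $\bfu$, the monotonicity \ref{enum:1} together with $\|\bfu^{n+1}-\bfu^n\|_{\bfH^1(\D)}\to0$ traps the whole tail of $\{\bfu^n\}$ in that ball and forces every subsequential limit onto $S^1\cdot\bfu$, i.e. $\dist_{S^1}^{\bfH^1}(\bfu^n,\bfu)\to0$. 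A more robust alternative is to establish a Łojasiewicz--Simon inequality for $E$ on the quotient manifold $\mathbb{S}/S^1$ near the quasi-isolated minimizer $\bfu$ --- the nondegeneracy furnished by \ref{A4} transverse to the phase direction $\ci\bfu$ is exactly what is required --- and to combine it with \ref{enum:1} as in \cite{PHMY242,HY25MathComp}, which also gives a convergence rate if wanted.

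\emph{Main obstacle.} The two delicate points are the uniform step-size interval in \ref{enum:1}, which needs the Taylor remainder and the retraction to be controlled uniformly over the (a priori bounded) iterates, and the full-sequence statement under \ref{A4}: because the limit set is the whole phase orbit $S^1\cdot\bfu$ rather than a single point, the argument cannot be carried out in $\bfH^1_0(\D)$ directly but must be phrased on $\mathbb{S}/S^1$, using \ref{A4} either to exclude nearby critical points or to produce the Łojasiewicz exponent.
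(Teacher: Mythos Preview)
Your sketch is correct and is, in fact, more than the paper itself provides: the paper does not prove this theorem but simply cites \cite[Theorem~4.5]{HHP25}, remarking that the damped $A$-method coincides with the energy-adaptive Sobolev gradient descent of that reference. Your Steps~1--3 reproduce precisely the energy-dissipation/compactness argument of \cite{HeP20,HHP25} (descent identity $\langle E'(\bfu^n),\mathbf d^n\rangle=-\langle\calA{\bfu^n}\mathbf d^n,\mathbf d^n\rangle$, Taylor expansion along the retraction with a uniform Lipschitz bound on $E'$, telescoping, Rellich compactness, and passage to the limit in the linearized equation), and your Step~4 correctly identifies that the full-sequence statement must be phrased modulo the $S^1$-orbit and can be obtained either by a trapping argument or a \L{}ojasiewicz--Simon inequality transverse to $\ci\bfu$, as in \cite{PHMY242,HY25MathComp}. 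One small simplification: $\mathbf d^n\to0$ in $\bfH^1(\D)$ follows directly from the descent inequality $E(\bfu^n)-E(\bfu^{n+1})\gtrsim\|\mathbf d^n\|_{\bfH^1}^2$ and the summability of the energy drops, without routing through $\|\bfu^{n+1}-\bfu^n\|_{\bfH^1}$.
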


\noindent
In practice, the damping parameter $\tau_n$ is typically chosen by a line search so as to minimize the energy along the descent direction $\mathbf{d}^n$, i.e.,
$$
\tau_n
=
\underset{0<\tau < 2}{\operatorname{arg\,min}} \,\, E\!\left( \bfu^{n+1}(\tau) \right).
$$
This choice guarantees $\tau_n \in [\varepsilon, \tau_{\max}]$, so the assumptions of Theorem~\ref{theorem:energy_diss-SO-BEC} are satisfied.

Although globally convergent, the method may exhibit slow local convergence in the vicinity of ground states, particularly in the supersolid regime. To overcome this bottleneck, we next turn to the $J$-method of the inverse iteration.

\section{The J-method}\label{section:4}

\subsection{The $J$-formulation of the SO-GPE}

To motivate the $J$-method, we follow the approach in \cite{jarlebring2012,AHP20}. 
We first recall the $A$-formulation of the SO-GPE, which seeks $\bfu \in \mathbb{S}$ and $\lambda \in \mathbb{R}$ such that
$$
\langle \calA{\bfu} \bfu, \bfv \rangle = \lambda (\bfu, \bfv)_{\mathbf{L}^2(\D)} 
\qquad \text{for all } \bfv \in \bfH^1_0(\D),
$$
where $\calA{\bfu} = \calA{\alpha \bfu}$ for all $\alpha \in \R \setminus \{0\}$. 
The corresponding $\mathcal{J}$-operator is obtained as the Fr\'echet derivative of $\calA{\bfv}\bfv$, i.e., 
\begin{align}
\label{def-J-op}
\calJ{\bfv} := \frac{\mbox{d}}{\mbox{d}\bfv}[ \calA{\bfv} \bfv].
\end{align}
Since $\calA{\bfv}\bfv : \bfH^1_0(\D) \to \bfH^{-1}(\D)$, we have $\calJ{\bfv} : \bfH^1_0(\D)\to \bfH^{-1}(\D)$ for each fixed $\bfv \neq \mathbf 0$.
Recalling that all spaces are $\R$-Hilbert spaces and that all Fr\'echet derivatives are understood as $\R$-Fr\'echet derivatives, the scaling invariance of $\calA{\bfv}$ yields, for any $\bfv \neq \mathbf{0}$,
\begin{align}
\label{identity-A-J}
 \calJ{\bfv} \bfv 
= \lim_{\eps \to 0} \frac{1}{\eps} \Big(  \calA{\bfv + \eps \bfv } ( \bfv + \eps \bfv ) - \calA{\bfv} \bfv \Big)
= \calA{\bfv } \bfv.
\end{align}
Consequently, we have $\langle \calJ{\bfu} \bfu , \bfv \rangle= \langle \calA{\bfu} \bfu , \bfv \rangle$ for all $\bfv\in\bfH^1_0(\D)$, and the SO-GPE can equivalently be formulated with the $\mathcal{J}$-operator:
\begin{equation}
	\label{eq:eigenvalue_J}
	\langle \calJ{\bfu} \bfu,\bfv \rangle = \lambda\,(\bfu,\bfv)_{\bfL^2(\D)}
	\qquad \mbox{for all } \bfv \in \bfH^1_0(\D).
\end{equation}
Using the explicit formula for $\calA{\bfu}$ in our setting, a straightforward calculation yields
\begin{align}
\label{eq:J}
\nonumber
\langle \calJ{\bfu}\bfv, \bfw \rangle
	&= \langle \calA{\bfu} \bfv, \bfw \rangle \,+ \, \Re\!\int_\D 
	\frac{\hspace{-23pt}\beta_{12}}{\|\bfu\|_{\bfL^2(\D)}^2}
	\Big[
	\big(u_2\conjv{2}+v_2\conju{2}\big)u_1\conjw{1}
	+ \big(u_1\conjv{1}+v_1\conju{1}\big)u_2\conjw{2}
	\Big] \, \dx \\
	&\quad+ \Re\!\int_\D 
	\sum_{j=1}^2
	\frac{\hspace{-23pt}\beta_{jj}}{\|\bfu\|_{\bfL^2(\D)}^2}
	\big(u_j\conjv{j}+v_j\conju{j}\big)u_j\conjw{j}\, \dx \\
\nonumber	&\quad
	- 2\,\frac{(\bfu,\bfv)_{\bfL^2(\D)}}{\|\bfu\|_{\bfL^2(\D)}^4}\,
	\Re\!\int_\D
	\Big(
	\sum_{j=1}^2 \beta_{jj}|u_j|^2 u_j\conjw{j}
	+ \beta_{12}\big(|u_1|^2 u_2\conjw{2} + |u_2|^2 u_1\conjw{1}\big)
	\Big)\dx.
\end{align}
Note that, in contrast to a Hessian of a real energy functional, $\langle \calJ{\bfu}\bfv, \bfw \rangle$ is generally not symmetric, i.e.,
\(\langle \calJ{\bfu}\bfv, \bfw \rangle \neq \langle \calJ{\bfu}\bfw, \bfv \rangle\).

\medskip
\noindent
\textbf{Connection to the canonical Hessian $E''(\bfu)$.}
Comparing the formulas for $\calJ{\bfu}$ in \eqref{eq:J} and for $E''(\bfu)$ in \eqref{sec-der-E}, 
and using that for $\bfu \in \mathbb{S}$ one has $\|\bfu\|_{\bfL^2(\D)} = 1$ as well as 
$(\bfu,\bfv)_{\bfL^2(\D)} = 0$ for all $\bfv \in T_{\bfu}\mathbb{S}$, we find that the quadratic forms
associated with $\calJ{\bfu}$ and $E''(\bfu)$ coincide on $T_{\bfu}\mathbb S$:
\begin{align}
\label{identity-Ju-Eprimeprime}
\langle \calJ{\bfu}\bfv, \bfv \rangle 
	\,\,\, = \,\,\, 
\langle E''(\bfu)\bfv, \bfv \rangle
\qquad \text{for all } \bfu \in \mathbb{S} \text{ and } \bfv \in T_{\bfu}\mathbb{S}.
\end{align}
Intuitively, $\calJ{\bfu}$ can be interpreted as the \emph{tangent-space Hessian} of $E$ at $\bfu$,
i.e., the composition
\begin{align*}
\calJ{\bfu} = P_{\bfu} \, E''(\bfu) \, P_{\bfu},
\end{align*}
where $P_{\bfu}$ is the $\bfL^2$--orthogonal projection from $\bfH^1_0(\D)$ onto the tangent space 
$T_{\bfu}\mathbb{S}$. The operator $E''(\bfu)$ naturally maps $\bfH^1_0(\D) \to \bfH^{-1}(\D)$, 
and the pairing $\langle E''(\bfu)\bfv,\bfw\rangle$ in \eqref{identity-Ju-Eprimeprime} is understood 
as the duality pairing between $\bfH^1_0(\D)$ and its dual. Formally, in sufficiently smooth situations, 
this duality reduces to the $\bfL^2$ inner product appearing in the explicit formulas for 
$\langle E''(\bfu)\cdot,\cdot\rangle$ and $\langle \calJ{\bfu}\cdot,\cdot\rangle$.  

Consequently, the spectrum of $\calJ{\bfu}$ coincides with the spectrum of $E''(\bfu)$ restricted to 
the tangent space: all eigenvalues corresponding to eigenfunctions orthogonal to $\bfu$ 
are preserved, while the radial direction $\operatorname{span}\{\bfu\}$ is removed by the projection. 
We will make this statement rigorous when exploiting the identity \eqref{identity-Ju-Eprimeprime} 
for the spectral analysis of $\calJ{\bfu}$.

\subsection{$J$-inverse iteration with spectral shifting}
Based on the $J$-formulation \eqref{eq:eigenvalue_J} of the SO-GPE, we can now formulate a corresponding inverse iteration. In contrast to the $A$-inverse iteration \eqref{A-iteration}, which does not benefit from spectral shifting for problems of Gross–Pitaevskii type (cf.~\cite{PH23,HJ25}), spectral shifting can accelerate the $J$-method tremendously. For this reason, we introduce the shifted $J$-operator for some shift $\sigma \in \R$ by
\begin{align*}
\calJsigma{\bfu} := \calJ{\bfu} - \sigma \mathcal{I} : \bfH^1_0(\D) \rightarrow \bfH^{-1}(\D),
\end{align*}
where we recall the notation $\mathcal{I}\bfv = (\bfv,\cdot)_{\bfL^2(\D)}$.\\[0.5em]
For the formulation of the inverse power method involving $\calJsigma{\bfu}$, we must ensure that the operator admits a bounded inverse. We will later show that this holds if the shift $\sigma$ is chosen appropriately. Assuming this, we define
$$
\bfz := \calJsigma{\bfu}^{-1} \mathcal{I} \bfw \in \bfH^1_0(\D),
$$
for a given $\bfw \in \bfH^1_0(\D)$, which is the unique solution to
\begin{align*}
\langle \calJsigma{\bfu} \, \bfz , \bfv \rangle = (\bfw, \bfv)_{\mathbf{L}^2(\D)} 
\qquad \text{for all } \bfv \in \bfH^1_0(\D).
\end{align*}
With this preparation, we define the shifted $J$-method as follows.
\begin{definition}[$J$-method with shifting]
Let $\bfu^{0} \in \mathbb{S}$ denote an initial guess.  
Then the iterates of the $J$-inverse iteration with shift $\sigma \in \R$ are defined for $n \ge 0$ by
\begin{align}
\label{J-iteration-shift}
\bfu^{n+1} 
:= 
\operatorname{sgn}(\lambda - \sigma) 
\frac{ \hspace{-24pt}\calJsigma{\bfu^n}^{-1} \mathcal{I} \bfu^n}
{\|  \calJsigma{\bfu^n}^{-1} \mathcal{I} \bfu^n \|_{\mathbf{L}^2(\D)} },
\end{align}
where the factor $\operatorname{sgn}(\lambda - \sigma)$ prevents sign oscillations in $\bfu^n$.
\end{definition}
In practice, $\operatorname{sgn}(\lambda - \sigma)$ is either omitted (accepting that the sign of $\bfu^n$ may flip in each iteration) or heuristically set to $-1$ if sign oscillations are observed, and to $1$ otherwise. In both cases, the convergence of the densities $|\bfu^n|^2$ and the eigenvalue approximations are unaffected. From a theoretical perspective, however, the factor $\operatorname{sgn}(\lambda - \sigma)$ is required to enable a direct convergence analysis that avoids technical case distinctions.

As for the shift, $\sigma$ is chosen such that it is close to (but not equal to) the target eigenvalue $\lambda$. This claim will be substantiated by our main result on the convergence of the $J$-method, formulated in the next subsection. To obtain a suitable approximation $\sigma$ of $\lambda$ such that $|\lambda - \sigma|$ is small, one can perform a few iterations of the (globally convergent) $A$-method.

\subsection{Local convergence rate of the shifted \(J\)-method}
The \(J\)-method converges to an eigenfunction \(\bfu\) of the SO-GPE \(\calA{\bfu} \bfu = \lambda \mathcal{I} \bfu\) if the starting function and the shift are chosen to be close enough to the target eigenpair \((\bfu,\lambda)\in\S\times\mathbb{R}\). For this reason, a globally converging method, such as the damped $A$-method \eqref{damped-A-iteration} should be used beforehand to get close enough to the eigenpair in question. After a few iterations, e.g. based on the size of the residual or the observed energy reduction per iteration, one can then switch to the shifted $J$-method. This will typically result in a significant speed up compared to only using the globally converging method. With global convergence of the damped $A$-method established in Theorem \ref{theorem:energy_diss-SO-BEC}, the next result establishes the rate-explicit local convergence of the $J$-method.

\begin{theorem}[Local convergence rate]\label{theo:local}
Assume \ref{A1}--\ref{A3}, and let $(\bfu,\lambda)\in\mathbb{S}\times\R$ be a
ground-state eigenpair of the SO--GPE that is quasi-isolated in the sense of
\ref{A4}.  
For a shift $\sigma\neq\lambda$ consider the $J$--method iterates
\begin{align*}
  \bfu^{n+1}
  = \operatorname{sgn}(\lambda-\sigma)\,
    \frac{\hspace{-24pt}\calJsigma{\bfu^n}^{-1}\mathcal I\bfu^n}
         {\|\calJsigma{\bfu^n}^{-1}\mathcal I\bfu^n\|_{\bfL^2(\D)}} .
\end{align*}
Let
\begin{align*}
  \Lambda(\bfu)
  := \{\,\mu\in\R : \exists\,\mathbf0\neq\bfv\in\bfH_0^1(\D)
        \text{ with }\calJ{\bfu}\bfv=\mu\,\mathcal I\bfv \,\}
\end{align*}
denote the (discrete) set of real eigenvalues of $\calJ{\bfu}$.  
If $\sigma\in\R$ satisfies
\begin{align*}
  |\lambda-\sigma|
  < \min_{\mu\in\Lambda(\bfu),\ \mu\neq\lambda} |\mu-\sigma|
  =: |\mu_0-\sigma|
  \qquad\text{(equivalently: } 
    \frac{|\lambda-\sigma|}{|\mu_0-\sigma|} < 1\text{)},
\end{align*}
then for every $\varepsilon>0$ there exist a neighbourhood
$U_\varepsilon\subset\bfH^1(\D)$ of $\bfu$ and a constant $C_\varepsilon>0$
such that for all $\bfu^0 \in U_\varepsilon$ and all $n\ge1$,
\begin{align*}
  \dist_{S^1}^{\bfH^1}(\bfu,\bfu^n)
  \;\le\;
  C_\varepsilon\,
  \left(\frac{|\lambda-\sigma|}{|\mu_0-\sigma|}+\varepsilon\right)^{n}
  \|\bfu^0-\bfu\|_{\bfH^1(\D)}.
\end{align*}
In particular, the iterates converge locally linearly to $\bfu$ in the
quotient metric modulo phase \eqref{quotient-metric}.  
Moreover, the particle densities converge at the same linear rate:
\begin{align*}
  \bigl\||\bfu^n|^{2}-|\bfu|^{2}\bigr\|_{\bfL^2(\D)}
  \;\le\;
  C_\varepsilon\,
  \left(\frac{|\lambda-\sigma|}{|\mu_0-\sigma|}+\varepsilon\right)^{n}
  \|\bfu^0-\bfu\|_{\bfH^1(\D)}.
\end{align*}
\end{theorem}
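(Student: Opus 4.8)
The plan is to recast the $J$-method as a fixed-point iteration and apply Ostrowski's theorem at a suitable fixed point, while factoring out the global phase symmetry that makes the canonical iteration degenerate. First I would lift the iteration to the sphere: define the fixed-point map $\Phi_\sigma(\bfw) := \operatorname{sgn}(\lambda-\sigma)\,\calJsigma{\bfw}^{-1}\mathcal I\bfw / \|\calJsigma{\bfw}^{-1}\mathcal I\bfw\|_{\bfL^2(\D)}$, which is well-defined and smooth on a neighbourhood of $\bfu$ once we know $\calJsigma{\bfw}^{-1}$ exists there. Existence and boundedness of $\calJsigma{\bfw}^{-1}$ near $\bfu$ follows from the spectral analysis of $\calJ{\bfu}$ promised in Section~\ref{section:5}: the hypothesis $|\lambda-\sigma|<|\mu_0-\sigma|$ guarantees $\sigma\notin\Lambda(\bfu)$, hence $\calJsigma{\bfu}$ is invertible, and a perturbation argument (continuity of $\bfw\mapsto\calJ{\bfw}$ in operator norm, which is routine from the explicit formula \eqref{eq:J} and Sobolev embedding) extends this to a neighbourhood. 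The eigenpair $\bfu$ is a fixed point of $\Phi_\sigma$ by \eqref{identity-A-J} and \eqref{eq:eigenvalue_J}, since $\calJsigma{\bfu}\bfu=(\lambda-\sigma)\mathcal I\bfu$, so $\calJsigma{\bfu}^{-1}\mathcal I\bfu=(\lambda-\sigma)^{-1}\bfu$, and the sign and normalization return $\bfu$.

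The obstruction is that $\bfu$ is not an isolated fixed point: the whole orbit $\{e^{\ci\omega}\bfu\}$ is fixed, so $D\Phi_\sigma(\bfu)$ has eigenvalue $1$ in the direction $\ci\bfu$, and Ostrowski's theorem does not apply directly. The remedy, following the technique of \cite{PHMY242}, is to introduce an \emph{auxiliary iteration with the global phase frozen}: compose $\Phi_\sigma$ with the phase-alignment map $\bfw\mapsto e^{\ci\omega_*(\bfw)}\bfw$ where $\omega_*(\bfw)=\arg\max_\omega(e^{\ci\omega}\bfw,\bfu)_{\bfL^2(\D)}$, obtaining a modified map $\widetilde\Phi_\sigma$ on the slice $\{\bfw:(\ci\bfu,\bfw)_{\bfL^2}=0\}$ transversal to the phase orbit. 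On this slice $\bfu$ is a genuinely isolated fixed point, and the derivative $D\widetilde\Phi_\sigma(\bfu)$ restricted to the tangent of the slice is, up to the projection and the shift, the operator $(\lambda-\sigma)\calJsigma{\bfu}^{-1}$ acting on $T_{\bfu}\mathbb S\cap T_{\ci\bfu}\mathbb S$; its spectral radius is exactly $\max_{\mu\in\Lambda(\bfu),\mu\neq\lambda}|\lambda-\sigma|/|\mu-\sigma|=|\lambda-\sigma|/|\mu_0-\sigma|<1$, where quasi-isolatedness \ref{A4} (via \eqref{identity-Ju-Eprimeprime}) is what rules out any further eigenvalue of $\calJ{\bfu}$ equal to $\lambda$ on that subspace.

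With this in hand, Ostrowski's theorem yields, for every $\eps>0$, a neighbourhood and a constant so that the auxiliary iterates satisfy $\|\widetilde{\bfu}^n-\bfu\|_{\bfH^1}\le C_\eps(|\lambda-\sigma|/|\mu_0-\sigma|+\eps)^n\|\bfu^0-\bfu\|_{\bfH^1}$. The final steps are bookkeeping: (i) the auxiliary iterate $\widetilde{\bfu}^n$ differs from the true iterate $\bfu^n$ only by a global phase, so $\dist_{S^1}^{\bfH^1}(\bfu,\bfu^n)\le\|\widetilde{\bfu}^n-\bfu\|_{\bfH^1}$, giving the first estimate; (ii) for the density estimate, write $|\bfu^n|^2-|\bfu|^2$ in terms of $\widetilde{\bfu}^n-\bfu$ using $|a|^2-|b|^2=\Re((a-b)\overline{(a+b)})$ componentwise, and bound the $\bfL^2$ norm of the product by the $\bfH^1$ norm of $\widetilde{\bfu}^n-\bfu$ times the $\bfH^1$ (hence $\bfL^4$, by Sobolev embedding in $d\le3$) norm of $\widetilde{\bfu}^n+\bfu$, the latter staying bounded near $\bfu$. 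The main obstacle is the second paragraph: correctly setting up the frozen-phase slice, verifying that $\widetilde\Phi_\sigma$ is $C^1$ there (the map $\omega_*$ is smooth away from orthogonality, which holds near $\bfu$), and identifying $D\widetilde\Phi_\sigma(\bfu)$ precisely with $(\lambda-\sigma)\calJsigma{\bfu}^{-1}|_{T_{\bfu}\mathbb S\cap T_{\ci\bfu}\mathbb S}$ so that its spectral radius matches the claimed rate — everything else is either cited from Section~\ref{section:5} or a Sobolev-embedding estimate.
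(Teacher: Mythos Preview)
Your proposal is correct and follows essentially the same route as the paper: defining the canonical fixed-point map $\Phi_\sigma$, observing that the phase direction $\ci\bfu$ forces $\rho(\Phi_\sigma'(\bfu))\ge 1$, introducing a phase-aligned auxiliary iteration $\tilde\Phi_\sigma$ whose iterates differ from $\bfu^n$ only by a global phase, computing the spectral radius of $\tilde\Phi_\sigma'(\bfu)$ as $|\lambda-\sigma|/|\mu_0-\sigma|$, applying Ostrowski, and then transferring to the quotient metric and the densities. The only place where the paper is more explicit than your sketch is the spectral identification: because $\calJ{\bfu}$ is not symmetric, the correspondence between eigenvalues of $\tilde\Phi_\sigma'(\bfu)$ and of $\calJ{\bfu}$ is established via \emph{adjoint} eigenfunctions (which are shown to lie in $T_{\bfu}\mathbb S\cap T_{\ci\bfu}\mathbb S$) rather than by directly restricting $(\lambda-\sigma)\calJsigma{\bfu}^{-1}$ to that subspace.
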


The proof of the theorem is established in several steps in Section \ref{sec:proof-loc-convergence}.

\section{Properties of the $J$-operator}\label{section:5}
In this section we collect various important properties of the operator $\calJ{\bfu}$. First, we establish that $\lambda$ is an eigenvalue of $\calJ{\bfu}$ of algebraic and geometric multiplicity $2$.

\begin{lemma}[Eigenspace to $\lambda$]\label{lem:J-mult-2-nonsym}
Assume \ref{A1}-\ref{A3} and let $\bfu\in\mathbb{S}$ be a local minimizer of $E$ with eigenpair $(\bfu, \lambda)\in\S\times\mathbb{R}$. If $\bfu$ is isolated in the sense of \ref{A4} then it holds
\begin{align*}
\ker\!\big(\mathcal{J}(\bfu)-\lambda\mathcal{I}\big)
=\operatorname{span}\{\bfu,\ci\bfu\} \qquad \mbox{(i.e., the geometric multiplicity of $\lambda$ is $2$)}
\end{align*}
and there are no Jordan chains at $\lambda$;\ hence the algebraic multiplicity of $\lambda$ is also $2$.
\end{lemma}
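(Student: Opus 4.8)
The plan is to first exhibit two explicit elements of $\ker T$, where $T:=\calJ{\bfu}-\lambda\mathcal{I}:\bfH^1_0(\D)\to\bfH^{-1}(\D)$, then to show that $\ker T$ is exactly their span using the positivity assumption~\ref{A4}, and finally to rule out Jordan chains by the very same positivity argument. That $\bfu\in\ker T$ is immediate from the Euler-type identity~\eqref{identity-A-J}, $\calJ{\bfu}\bfu=\calA{\bfu}\bfu$, combined with the SO-GPE $\calA{\bfu}\bfu=\lambda\mathcal{I}\bfu$. For $\ci\bfu$ I would use that $\calA{\cdot}$ is invariant not only under real but under arbitrary nonzero complex rescalings of its argument, since it depends on $\bfu$ only through the ratios $|u_j|^2/\|\bfu\|_{\bfL^2(\D)}^2$, which are unchanged under $\bfu\mapsto c\bfu$ for $c\in\C\setminus\{0\}$. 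Because $\bfu+\eps\ci\bfu=(1+\eps\ci)\bfu$, this gives $\calA{\bfu+\eps\ci\bfu}[\bfu+\eps\ci\bfu]=\calA{\bfu}[(1+\eps\ci)\bfu]$; differentiating at $\eps=0$ and using the $\R$-linearity of $\calA{\bfu}$ in its argument yields $\calJ{\bfu}(\ci\bfu)=\calA{\bfu}(\ci\bfu)$. A short computation exploiting the $\C$-sesquilinear structure of $\langle\calA{\bfu}\cdot,\cdot\rangle$ (which makes $\calA{\bfu}$ symmetric for the real $\bfL^2$-product) together with the SO-GPE then shows $\calA{\bfu}(\ci\bfu)=\lambda\,\mathcal{I}(\ci\bfu)$, so that $\ci\bfu\in\ker T$ as well; equivalently, one may differentiate the phase-invariant identity $\calA{e^{\ci\omega}\bfu}(e^{\ci\omega}\bfu)=\lambda\,\mathcal{I}(e^{\ci\omega}\bfu)$ at $\omega=0$. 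Since $\|\bfu\|_{\bfL^2(\D)}=1$, the vectors $\bfu$ and $\ci\bfu$ are $\bfL^2$-orthonormal and in particular linearly independent, so $\operatorname{span}\{\bfu,\ci\bfu\}\subseteq\ker T$ and $\dim\ker T\ge2$.

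To see that $\ker T$ is exactly two-dimensional, take $\bfw\in\ker T$ and split it $\bfL^2$-orthogonally as $\bfw=\bfw_0+\bfw^\perp$ with $\bfw_0\in\operatorname{span}\{\bfu,\ci\bfu\}$ and $\bfw^\perp\in T_{\bfu}\mathbb{S}\cap T_{\ci\bfu}\mathbb{S}$. By the first step $\bfw_0\in\ker T$, hence $\bfw^\perp=\bfw-\bfw_0\in\ker T$ and therefore $\langle T\bfw^\perp,\bfw^\perp\rangle=0$. On the other hand, since $\bfw^\perp\in T_{\bfu}\mathbb{S}$, identity~\eqref{identity-Ju-Eprimeprime} gives $\langle T\bfw^\perp,\bfw^\perp\rangle=\langle E''(\bfu)\bfw^\perp,\bfw^\perp\rangle-\lambda\|\bfw^\perp\|_{\bfL^2(\D)}^2$, which by~\ref{A4} is strictly positive unless $\bfw^\perp=0$. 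Hence $\bfw^\perp=0$, so $\ker T=\operatorname{span}\{\bfu,\ci\bfu\}$ and the geometric multiplicity of $\lambda$ equals $2$.

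For the absence of Jordan chains, assume for contradiction that $\lambda$ admits a chain of length at least two, i.e.\ that there are $\bfx_1\in\ker T\setminus\{\mathbf{0}\}$ and $\bfx_2\in\bfH^1_0(\D)$ with $T\bfx_2=\mathcal{I}\bfx_1$. Replacing $\bfx_2$ by $\bfx_2-P\bfx_2$, where $P$ is the $\bfL^2$-orthogonal projection onto $\operatorname{span}\{\bfu,\ci\bfu\}=\ker T$ (so that $P\bfx_2\in\bfH^1_0(\D)$ and $TP\bfx_2=\mathbf{0}$), we may assume $\bfx_2\in T_{\bfu}\mathbb{S}\cap T_{\ci\bfu}\mathbb{S}$ while still $T\bfx_2=\mathcal{I}\bfx_1$. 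Testing with $\bfx_2$ and using that $\bfx_1\in\operatorname{span}\{\bfu,\ci\bfu\}$ is $\bfL^2$-orthogonal to $\bfx_2$, we obtain $\langle T\bfx_2,\bfx_2\rangle=(\bfx_1,\bfx_2)_{\bfL^2(\D)}=0$; but by~\eqref{identity-Ju-Eprimeprime} and~\ref{A4} we also have $\langle T\bfx_2,\bfx_2\rangle=\langle E''(\bfu)\bfx_2,\bfx_2\rangle-\lambda\|\bfx_2\|_{\bfL^2(\D)}^2>0$ unless $\bfx_2=\mathbf{0}$. Hence $\bfx_2=\mathbf{0}$, which forces $\mathcal{I}\bfx_1=T\bfx_2=0$ and thus $\bfx_1=\mathbf{0}$, a contradiction. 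So every Jordan chain at $\lambda$ has length one. Since $T$ differs from the coercive (hence invertible) operator $\calA{\bfu}$ by a compact operator and is therefore Fredholm of index zero, $\lambda$ is an isolated point of the spectrum of the pencil $\mu\mapsto\calJ{\bfu}-\mu\mathcal{I}$ with finite-dimensional generalized eigenspace; combined with the previous step, this gives algebraic multiplicity $2$.

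The only genuinely delicate point is the verification that $\ci\bfu\in\ker T$, which requires passing from the real-scaling invariance of $\calA{\cdot}$ to the action of $\calJ{\bfu}$ in the phase direction and then confirming $\calA{\bfu}(\ci\bfu)=\lambda\,\mathcal{I}(\ci\bfu)$ from the sesquilinear structure of the form. The rest is bookkeeping, two identical applications of~\ref{A4}, and the standard Fredholm argument that upgrades ``no Jordan chains'' to ``algebraic multiplicity equals geometric multiplicity''.
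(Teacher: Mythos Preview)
Your proof is correct and follows essentially the same approach as the paper: you verify $\bfu,\ci\bfu\in\ker T$ via the scaling/phase invariance of $\calA{\cdot}$, then use the $\bfL^2$-orthogonal decomposition into $\operatorname{span}\{\bfu,\ci\bfu\}$ and its complement together with identity~\eqref{identity-Ju-Eprimeprime} and assumption~\ref{A4} to bound the kernel and exclude Jordan chains. The only cosmetic differences are that the paper obtains $\calJ{\bfu}(\ci\bfu)=\lambda\mathcal I(\ci\bfu)$ in one line by differentiating $\calA{e^{\ci\theta}\bfu}(e^{\ci\theta}\bfu)$ at $\theta=0$ (which you mention as an alternative), and that it defers the Fredholm/compact-resolvent justification for the finiteness of the algebraic multiplicity to a subsequent lemma rather than including it here.
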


\begin{proof}
We already verified that the scaling invariance of $\calA{\bfu}$ implies 
\begin{align*}
\mathcal{J}(\bfu)\bfu =\mathcal{A}(\bfu)\bfu=\lambda\mathcal{I}\bfu,
\end{align*}
i.e., $\bfu$ is an eigenfunction of $\mathcal{J}(\bfu)$ to the eigenvalue $\lambda$. Furthermore, by the phase invariance, $\mathcal{A}(\exp(\ci\theta)\bfu)=\mathcal{A}(\bfu)$, we also have
\begin{align*}
\mathcal{J}(\bfu)[\ci\bfu]
=\frac{\mathrm d}{\mathrm d\theta}\Big|_{\theta=0}
\mathcal{A}(\exp(\ci\theta)\bfu)(\exp(\ci\theta)\bfu)
=\ci\,\mathcal{A}(\bfu)\bfu=\lambda\,\mathcal{I}[\ci\bfu],
\end{align*}
i.e., $\ci \bfu$ is a second eigenfunction of $\mathcal{J}(\bfu)$ to the eigenvalue $\lambda$. 
Next, recall \ref{A4}, i.e.,
\begin{align}
\label{A4-new}
\langle (E''(\bfu)-\lambda\mathcal{I})\bfv,\bfv\rangle>0
\quad\text{for all }\bfv\in \mathbb{T}_{\bfu}\mathbb{S}\cap \mathbb{T}_{\ci\bfu}\mathbb{S}\setminus\{\mathbf0\}.
\end{align}
To show that the geometric multiplicity of $\lambda$ is $2$, we let $\bfv \in \bfH^1_0(\D)$ be any other eigenfunction with $(\mathcal{J}(\bfu)-\lambda\mathcal{I})\bfv=\mathbf0$. We can decompose it uniquely as
\begin{align*}
\bfv = a\,\bfu + b\,\ci\bfu + \bfw,
\qquad \bfw\in \mathbb{T}_{\bfu}\mathbb{S}\cap \mathbb{T}_{\ci\bfu}\mathbb{S}, \qquad a,b \in \R.
\end{align*}
Since $\bfu$ and $\ci \bfu$ are eigenfunctions to $\lambda$, we obtain
\begin{align*}
\mathbf0 \,\,=\,\, (\mathcal{J}(\bfu)-\lambda\mathcal{I})\bfv =  (\mathcal{J}(\bfu)-\lambda\mathcal{I})\bfw.
\end{align*}
Exploiting that $\calJ{\bfu}$ is the tangent-space Hessian of $E$ at $\bfu$, we obtain 
\begin{align*}
0 \,\,\, = \,\,\,  \langle (\mathcal{J}(\bfu)-\lambda\mathcal{I}) \bfw, \bfw \rangle 
	\,\,\, \overset{\eqref{identity-Ju-Eprimeprime}}{=} \,\,\, 
\langle (E''(\bfu)-\lambda\mathcal{I}) \bfw, \bfw \rangle \,\,\, \overset{\eqref{A4-new}}{>}\,\,\, 0 \quad\text{if }\bfw\neq \mathbf0,
\end{align*}
a contradiction. Hence $\bfw=\mathbf0$ and $\bfv\in \operatorname{span}\{\bfu,\ci\bfu\}$, proving that the geometric multiplicity is $2$.\\[0.4em]
It remains to show that no Jordan chains occur at $\lambda$.  Suppose there were $\bfz\neq\mathbf0$ with
$(\mathcal{J}(\bfu)-\lambda\mathcal{I})\bfz=\mathcal{I} \boldsymbol{\gamma}$, where
$\boldsymbol{\gamma}\in\operatorname{span}\{\bfu,\ci\bfu\}\setminus\{\mathbf 0\}$.
Decompose $\bfz=a\,\bfu+b\,\ci\bfu+\bfw$ with
$\bfw\in \mathbb{T}_{\bfu}\mathbb{S}\cap \mathbb{T}_{\ci\bfu}\mathbb{S}$. Then
$(\mathcal{J}(\bfu)-\lambda\mathcal{I})\bfw=\mathcal{I} \boldsymbol{\gamma}$.
Taking the duality pairing with $\bfw$ and again using \eqref{identity-Ju-Eprimeprime}, we have
\begin{align*}
\big\langle (E''(\bfu)-\lambda\mathcal{I})\bfw,\bfw\big\rangle
\;=\;\big\langle (\mathcal{J}(\bfu)-\lambda\mathcal{I})\bfw,\bfw\big\rangle
\;=\;( \boldsymbol{\gamma},\bfw)_{\bfL^2(\D)} \;=\; 0,
\end{align*}
since $\bfw\perp_{\bfL^2}\boldsymbol{\gamma}$. By \eqref{A4-new} this forces $\bfw=\mathbf0$, hence
$\boldsymbol{\gamma}=\mathbf0$, a contradiction. Therefore, no Jordan chains exist.
\end{proof}

The lemma shows that the neutral eigenspace of $\mathcal{J}(\bfu)$
is exactly two--dimensional, spanned by $\bfu$ and $\ci\bfu$.
These directions correspond to the natural symmetries of the problem:
real scaling and phase rotation. 
All other directions are strictly stable due to assumption~\ref{A4}.
Hence the degeneracy of $\lambda$ is entirely explained by the invariances.

The second property of the lemma ensures that $\lambda$ is semisimple, i.e., no Jordan chains occur for this eigenvalue.
Consequently, the linearized space admits a clean decomposition into the two-dimensional neutral subspace $\operatorname{span}\{\bfu,\mathrm i\bfu\}$ and its stable complement, on which $\mathcal{J}(\bfu)-\lambda\mathcal{I}$ is invertible.
This structure enables standard perturbation and convergence arguments for the $J$-method.
Next, we prove that $\calJ{\bfu}$ satisfies a G\r{a}rding inequality.
\begin{lemma}[G\r{a}rding inequality]\label{lem:J-Garding}
Assume \ref{A1}--\ref{A3} and let $\bfu\in \bfH^1_0(\D) \setminus \{ \mathbf{0} \}$. Then there exists a constant $\sigma_0(\bfu)\in\mathbb{R}$ such that
\begin{align}\label{eq:J-garding}
\langle \calJ{\bfu}\bfv,\bfv\rangle \;\ge\; \frac{1}{8}\,\|\bfv\|_{\bfH^1(\D)}^2 \;-\; \sigma_0(\bfu)\,\|\bfv\|_{\bfL^2(\D)}^2
\qquad\text{for all }\bfv\in\bfH_0^1(\D).
\end{align}
\end{lemma}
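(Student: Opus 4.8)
The plan is to reduce the estimate to the coercivity of $\calA{\bfu}$ already established in Lemma~\ref{lemma-coercivity-Au} and to absorb the purely nonlinear ($\beta$-dependent) part of $\calJ{\bfu}$ into that bound, at the cost of a lower-order $\bfL^2$-term. First I would observe that $\calA{\bfu}\bfu$ is homogeneous of degree one in $\bfu$ (since $\calA{\alpha\bfu}=\calA{\bfu}$ and $\calA{\bfu}$ is linear), so its Fréchet derivative $\calJ{\bfu}$ is homogeneous of degree zero; hence $\calJ{\alpha\bfu}=\calJ{\bfu}$ for every $\alpha\neq 0$, and we may assume $\bfu\in\mathbb{S}$, i.e.\ $\|\bfu\|_{\bfL^2(\D)}=1$ (for general $\bfu$ one simply sets $\sigma_0(\bfu):=\sigma_0(\bfu/\|\bfu\|_{\bfL^2(\D)})$). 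Setting $\bfw=\bfv$ in the explicit formula~\eqref{eq:J}, I write $\langle \calJ{\bfu}\bfv,\bfv\rangle = \langle \calA{\bfu}\bfv,\bfv\rangle + R(\bfv)$, where $R(\bfv)$ is the sum of the three $\beta$-integrals appearing after $\langle \calA{\bfu}\bfv,\bfw\rangle$. By Lemma~\ref{lemma-coercivity-Au} one has $\langle \calA{\bfu}\bfv,\bfv\rangle \ge \tfrac14\|\bfv\|_{\bfH^1(\D)}^2$, so it suffices to bound $R(\bfv)$ below by $-\tfrac18\|\bfv\|_{\bfH^1(\D)}^2 - \sigma_0(\bfu)\|\bfv\|_{\bfL^2(\D)}^2$.

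Next I would estimate $R(\bfv)$ term by term. The first two integrals in~\eqref{eq:J} are quartic expressions with two factors of $\bfu$ and two of $\bfv$; passing to absolute values and using Hölder's inequality bounds them by $C(\beta_{11},\beta_{12},\beta_{22})\bigl(\sum_{j}\|u_j\|_{L^4(\D)}^2\bigr)\bigl(\sum_{j}\|v_j\|_{L^4(\D)}^2\bigr)$. The third integral carries the prefactor $2(\bfu,\bfv)_{\bfL^2(\D)}$; using $|(\bfu,\bfv)_{\bfL^2(\D)}|\le\|\bfv\|_{\bfL^2(\D)}$ together with Hölder's inequality (with $|u_j|^3\in L^2(\D)$, i.e.\ $u_j\in L^6(\D)$) bounds it by $C(\bfu)\|\bfv\|_{\bfL^2(\D)}^2$. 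All the $\bfu$-norms appearing here are finite because, by~\ref{A1} with $d\in\{2,3\}$, the Sobolev embedding gives $H^1_0(\D)\hookrightarrow L^6(\D)\hookrightarrow L^4(\D)$.

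The final step exploits that $L^4(\D)$ is a \emph{subcritical} target in dimension $d\le 3$, so $H^1_0(\D)\hookrightarrow\hookrightarrow L^4(\D)$ is compact and Ehrling's lemma (equivalently, Gagliardo--Nirenberg interpolation) yields, for every $\eta>0$, a constant $C_\eta>0$ with $\|v_j\|_{L^4(\D)}^2\le \eta\,\|v_j\|_{H^1(\D)}^2 + C_\eta\,\|v_j\|_{L^2(\D)}^2$. Inserting this into the bound for the quartic terms and summing over $j$ gives $|R(\bfv)| \le C(\bfu)\,\eta\,\|\bfv\|_{\bfH^1(\D)}^2 + \bigl(C(\bfu)\,C_\eta + C(\bfu)\bigr)\|\bfv\|_{\bfL^2(\D)}^2$. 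Choosing $\eta$ small enough that $C(\bfu)\,\eta\le\tfrac18$ and setting $\sigma_0(\bfu):=C(\bfu)\,C_\eta + C(\bfu)$ then produces $\langle\calJ{\bfu}\bfv,\bfv\rangle \ge (\tfrac14-\tfrac18)\|\bfv\|_{\bfH^1(\D)}^2 - \sigma_0(\bfu)\|\bfv\|_{\bfL^2(\D)}^2$, which is exactly~\eqref{eq:J-garding}.

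There is no deep obstacle in this argument; the only point that genuinely requires care is that $\bfu$ is merely an $\bfH^1_0$-function and \emph{not} bounded, so the quartic remainder cannot be controlled pointwise and must be handled through the subcritical Sobolev embedding together with the interpolation/absorption step for a sufficiently small parameter. Accordingly, the resulting Gårding constant $\sigma_0(\bfu)$ depends on $\bfu$ only through its Sobolev ($L^4$/$L^6$) norms, which is consistent with the statement that $\sigma_0$ may depend on $\bfu$.
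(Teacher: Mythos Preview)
Your proof is correct and follows the same high-level strategy as the paper: decompose $\langle\calJ{\bfu}\bfv,\bfv\rangle = \langle\calA{\bfu}\bfv,\bfv\rangle + R(\bfv)$, invoke the coercivity of $\calA{\bfu}$ from Lemma~\ref{lemma-coercivity-Au}, and absorb the nonlinear remainder $R(\bfv)$ into a large $\bfL^2$-term. The executions differ in two respects. First, you reduce to $\|\bfu\|_{\bfL^2(\D)}=1$ via the degree-zero homogeneity of $\calJ{\cdot}$, whereas the paper carries the $\|\bfu\|_{\bfL^2(\D)}$-factors throughout; your reduction is a clean simplification. Second, and more substantively, for the quartic mixed term the paper uses an \emph{asymmetric} H\"older split $\|u_1\|_{L^6}\|u_2\|_{L^6}\|v_1\|_{L^6}\|v_2\|_{L^2}$ (one $v$-factor via the critical Sobolev embedding, the other in $L^2$) followed by Young's inequality with an explicit parameter, while you use a \emph{symmetric} $L^4$-split on the $v$-factors and then Ehrling's lemma. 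Both mechanisms produce an arbitrarily small $\bfH^1$-coefficient; yours is the standard textbook route for G\r{a}rding inequalities and arguably more transparent, while the paper's gives fully explicit constants. Two minor points where the paper is slightly sharper: it observes that the diagonal contribution $2\beta_{jj}\int(\Re(u_j\overline{v_j}))^2\,\dx$ is nonnegative by~\ref{A3} and hence needs no estimate, and for the rank-one term it likewise ends up with a product $\|\bfv\|_{\bfL^2}\|\bfv\|_{\bfH^1}$ before applying Young---though your direct bound by $C(\bfu)\|\bfv\|_{\bfL^2}^2$ for that term is in fact simpler.
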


\begin{proof}
Recall from \eqref{eq:J} that
\begin{align*}
\langle \calJ{\bfu}\bfv,\bfv\rangle
&= \langle \calA{\bfu}\bfv,\bfv\rangle 
+ \sum_{j=1}^2 \frac{\hspace{-20pt}2\beta_{jj}}{\|\bfu\|_{\bfL^2(\D)}^2}\int_{\D}\big(\Re(u_j\overline{v_j})\big)^2\,\mathrm{d}x
+ \frac{\hspace{-20pt}4\beta_{12}}{\|\bfu\|_{\bfL^2(\D)}^2}\int_{\D}\Re(u_1\overline{v_1})\,\Re(u_2\overline{v_2})\,\mathrm{d}x\\
&\quad -2\,\frac{(\bfu,\bfv)_{\bfL^2(\D)}}{\|\bfu\|_{\bfL^2(\D)}^4}
\left(\sum_{j=1}^2 \beta_{jj}\Re\!\int_{\D}|u_j|^2u_j\overline{v_j}\,\mathrm{d}x 
+ \beta_{12}\Re\!\int_{\D} |u_1|^2u_2\overline{v_2} + |u_2|^2u_1\overline{v_1}\,\mathrm{d}x\right).
\end{align*}
By Lemma~\ref{lemma-coercivity-Au} we have
\begin{align}\label{eq:Au-coercive}
\langle \calA{\bfu}\bfv,\bfv\rangle \;\ge\; \frac14\,\|\bfv\|_{\bfH^1(\D)}^2
\qquad\text{for all }\bfv\in\bfH_0^1(\D).
\end{align}
Moreover, by \ref{A3} we have $\beta_{jj}\ge 0$, hence the second term above is nonnegative.

To estimate the third term in $\langle \calJ{\bfu}\bfv,\bfv\rangle$, let $C_S>0$ be the Sobolev constant for $H^1(\D)\hookrightarrow L^6(\D)$, i.e.,\ $\|w\|_{L^6(\D)} \le C_S\|w\|_{H^1(\D)}$. Set
\begin{align*}
C_{12}(\bfu):=\|u_1\|_{L^6(\D)}\,\|u_2\|_{L^6(\D)}.
\end{align*}
Using H\"older-, Sobolev-, and Young-inequalties (with parameter $\varepsilon_0>0$), we get
\begin{eqnarray*}
\lefteqn{ \int_{\D}\big|\Re(u_1\overline{v_1})\Re(u_2\overline{v_2})\big|\,\mathrm{d}x
\;\le\; \|u_1\|_{L^6(\D)} \|u_2\|_{L^6(\D)} \|v_1\|_{L^6(\D)} \|v_2\|_{L^2(\D)}
\;\le\; C_{12}(\bfu)\,C_S\,\|v_1\|_{H^1}\,\|v_2\|_{L^2(\D)} }\\
&\le& \frac{(C_{12}(\bfu)C_S)^2}{2\varepsilon_0}\,\|v_1\|_{H^1(\D)}^2 + \frac{\varepsilon_0}{2}\,\| v_2\|_{L^2(\D)}^2
\le \frac{(C_{12}(\bfu)C_S)^2}{2\varepsilon_0}\big(\|\nabla\bfv\|_{\bfL^2(\D)}^2+\|\bfv\|_{\bfL^2(\D)}^2\big)
+ \frac{\varepsilon_0}{2}\,\|\bfv\|_{\bfL^2(\D)}^2.
\end{eqnarray*}
Therefore,
\begin{eqnarray}\label{eq:mixed-term}
\lefteqn{\hspace{-20pt} \frac{4\beta_{12}}{\|\bfu\|_{\bfL^2(\D)}^2}\int_{\D}\Re(u_1\overline{v_1})\,\Re(u_2\overline{v_2})\,\mathrm{d}x } \\
\nonumber&\ge&
-\frac{\hspace{-20pt}2\beta_{12}}{\|\bfu\|_{\bfL^2(\D)}^2}\frac{(C_{12}(\bfu)C_S)^2}{\varepsilon_0}\,\|\bfv\|_{\bfH^1(\D)}^2
-\frac{\hspace{-20pt}2\beta_{12}}{\|\bfu\|_{\bfL^2(\D)}^2}\!\left(\frac{(C_{12}(\bfu)C_S)^2}{\varepsilon_0}+\varepsilon_0\right)\!\|\bfv\|_{\bfL^2(\D)}^2.
\end{eqnarray}
For the last term in $\langle \calJ{\bfu}\bfv,\bfv\rangle$, define for $j,k\in\{1,2\}$ 
\begin{align*}
M_{jk}(\bfu):=\frac{\||u_j|^2u_k\|_{L^2(\D)}}{\|\bfu\|_{\bfL^2(\D)}^3}.
\end{align*}
Then, using again Cauchy--Schwarz-, Sobolev-, and Young inequalities (with parameters $\varepsilon_{jk}>0$), we obtain
\begin{align*}
&\left|\,\frac{(\bfu,\bfv)_{\bfL^2(\D)}}{\|\bfu\|_{\bfL^2(\D)}^4}\,\Re\!\int_{\D}|u_j|^2u_k\,\overline{v_k}\,\mathrm{d}x\,\right|
\;\le\; \frac{\|\bfu\|_{\bfL^2(\D)}}{\|\bfu\|_{\bfL^2(\D)}^4}\,\|\bfv\|_{\bfL^2(\D)}\,\||u_j|^2u_k\|_{L^2}\,\|v_k\|_{L^2} \\
&\qquad\le\; M_{jk}(\bfu)\,\|\bfv\|_{\bfL^2(\D)}\,\|\bfv\|_{\bfH^1(\D)}
\;\le\; \frac{\varepsilon_{jk}}{2}\,\|\bfv\|_{\bfH^1(\D)}^2 \;+\; \frac{M_{jk}(\bfu)^2}{2\varepsilon_{jk}}\,\|\bfv\|_{\bfL^2(\D)}^2.
\end{align*}
With the coefficients in \(\langle\calJ{\bfu}\bfv,\bfv\rangle\), this yields
\begin{align}\label{eq:rank-one-term}
&-2\,\frac{(\bfu,\bfv)_{\bfL^2(\D)}}{\|\bfu\|_{\bfL^2(\D)}^4}
\left(\sum_{j=1}^2 \beta_{jj}\Re\!\int |u_j|^2u_j\overline{v_j}\dx\, + \,\beta_{12}\Re\!\int |u_1|^2u_2\overline{v_2}+|u_2|^2u_1\overline{v_1}\dx\right)\nonumber\\
&\qquad\ge\;
-\frac12\Big(\beta_{11}\varepsilon_{11}+\beta_{22}\varepsilon_{22}+2\beta_{12}\varepsilon_{12}\Big)\,\|\bfv\|_{\bfH^1(\D)}^2 \\
&\qquad \quad-\frac12\!\left(\beta_{11}\frac{M_{11}(\bfu)^2}{\varepsilon_{11}}+\beta_{22}\frac{M_{22}(\bfu)^2}{\varepsilon_{22}}
\nonumber+2\beta_{12}\frac{M_{12}(\bfu)^2+M_{21}(\bfu)^2}{\varepsilon_{12}}\right)\!\|\bfv\|_{\bfL^2(\D)}^2.
\end{align}
Combining \eqref{eq:Au-coercive}, \eqref{eq:mixed-term}, \eqref{eq:rank-one-term} and using that the second term in $\langle\calJ{\bfu}\bfv,\bfv\rangle$ is nonnegative by \ref{A3}, we get
\begin{align*}
\langle \calJ{\bfu}\bfv,\bfv\rangle
&\ge 
\Bigg[
\frac14
-\frac{2\beta_{12}}{\|\bfu\|_{\bfL^2}^2}
   \frac{(C_{12}(\bfu)C_S)^2}{\varepsilon_0}
-\frac12\big(
   \beta_{11}\varepsilon_{11}
  +\beta_{22}\varepsilon_{22}
  +2\beta_{12}\varepsilon_{12}
  \big)
\Bigg]\|\bfv\|_{\bfH^1(\D)}^2 \\[0.4em]
&\quad
-\Bigg[
\frac{2\beta_{12}}{\|\bfu\|_{\bfL^2}^2}
   \left(
     \frac{(C_{12}(\bfu)C_S)^2}{\varepsilon_0}
     +\varepsilon_0
   \right)
+\frac12\Bigg(
\beta_{11}\frac{M_{11}(\bfu)^2}{\varepsilon_{11}}
+\beta_{22}\frac{M_{22}(\bfu)^2}{\varepsilon_{22}}\\[-0.2em]
&\hspace{10em}
+\,2\beta_{12}\frac{M_{12}(\bfu)^2+M_{21}(\bfu)^2}{\varepsilon_{12}}
\Bigg)
\Bigg]\|\bfv\|_{\bfL^2(\D)}^2.
\end{align*}
Pick the parameters such that
\begin{align*}
\frac{2\beta_{12}}{\|\bfu\|_{\bfL^2}^2}\frac{(C_{12}(\bfu)C_S)^2}{\varepsilon_0}\;\le\;\frac{1}{16},
\qquad
\frac12\Big(\beta_{11}\varepsilon_{11}+\beta_{22}\varepsilon_{22}+2\beta_{12}\varepsilon_{12}\Big)\;\le\;\frac{1}{16},
\end{align*}
e.g.\ take $\varepsilon_0$ sufficiently large and $\varepsilon_{11},\varepsilon_{22},\varepsilon_{12}$ sufficiently small (all depending on $\bfu$ and the $\beta$'s). Then the bracket in front of $\|\bfv\|_{\bfH^1(\D)}^2$ is at least $\tfrac18$, and we can set
\begin{align*}
\sigma_0(\bfu)
:=\; \frac{2\beta_{12}}{\|\bfu\|_{\bfL^2}^2}\!\left(\frac{(C_{12}(\bfu)C_S)^2}{\varepsilon_0}+\varepsilon_0\right)
+\beta_{11}\frac{M_{11}(\bfu)^2}{2\,\varepsilon_{11}}+\beta_{22}\frac{M_{22}(\bfu)^2}{2\,\varepsilon_{22}}
+2\beta_{12}\frac{M_{12}(\bfu)^2+M_{21}(\bfu)^2}{2\,\varepsilon_{12}},
\end{align*}
which is finite under \ref{A1}--\ref{A3}. This gives \eqref{eq:J-garding}.
\end{proof}

We now combine the G\r{a}rding inequality and the compact embedding 
$\bfH_0^1(\D)\hookrightarrow \bfL^2(\D)$ to establish a Fredholm alternative
for the generally non-symmetric operator $\calJ{\bfu}$. Recall the following classical result (cf. \cite[Thrm. 2.2]{GH94} and \cite[Thrm. 2.34]{McLean}).
\begin{lemma}[Fredholm alternative]
\label{lem:Fredholm-alt-J}
Let $\mathcal{G} :\bfH_0^1(\D)\to \bfH^{-1}(\D)$ be the bounded linear operator
induced by the (in general non-symmetric) real bilinear form
\begin{align*}
  \langle \mathcal{G}\bfv,\bfw\rangle:\ \bfH_0^1(\D)\times \bfH_0^1(\D)\to\mathbb{R}.
\end{align*}
Assume there exist constants $\alpha,\beta>0$ such that the G\r{a}rding inequality
\begin{align*}
  \langle \mathcal{G}\bfv,\bfv\rangle \;\ge\;
  \alpha\,\|\bfv\|_{\bfH^1(\D)}^2 - \beta\,\|\bfv\|_{\bfL^2(\D)}^2
  \qquad\mbox{for all }\,\bfv\in\bfH_0^1(\D)
\end{align*}
holds. Then $\mathcal{G}$ is a Fredholm operator of index \(0\). Moreover, the Fredholm alternative holds, i.e., for every
$\sigma\in\mathbb{R}$ exactly one of the following alternatives holds:
\begin{enumerate}
  \item[(i)] There exists a nontrivial $\bfv\in\bfH_0^1(\D)$ such that
        \begin{align*}
          \mathcal{G}\bfv \;=\; \sigma\,\mathcal I\,\bfv \quad \text{in }\bfH^{-1}(\D).
        \end{align*}
  \item[(ii)] The shifted operator
        \begin{align*}
          \mathcal{G}-\sigma\,\mathcal I:\ \bfH_0^1(\D)\longrightarrow \bfH^{-1}(\D)
        \end{align*}
        is bijective and has a bounded inverse.
\end{enumerate}
\end{lemma}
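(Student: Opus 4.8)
The plan is to reduce the statement to the classical Riesz--Schauder theory of compact perturbations of the identity. First I would exploit the G\r{a}rding inequality to turn $\mathcal{G}$ into a coercive operator after a shift: setting $\mathcal{B}:=\mathcal{G}+\beta\,\mathcal{I}$, the assumption gives $\langle\mathcal{B}\bfv,\bfv\rangle\ge\alpha\,\|\bfv\|_{\bfH^1(\D)}^2$ for all $\bfv\in\bfH_0^1(\D)$. Since $\mathcal{G}$ is bounded by hypothesis and $\mathcal{I}\colon\bfH_0^1(\D)\to\bfH^{-1}(\D)$ is bounded (because $\|\mathcal{I}\bfw\|_{\bfH^{-1}(\D)}\le\|\bfw\|_{\bfL^2(\D)}\le C\,\|\bfw\|_{\bfH^1(\D)}$), the operator $\mathcal{B}$ is bounded as well. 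The Lax--Milgram theorem --- which does not require symmetry of the underlying bilinear form --- then yields that $\mathcal{B}\colon\bfH_0^1(\D)\to\bfH^{-1}(\D)$ is a bounded linear isomorphism with bounded inverse $\mathcal{B}^{-1}\colon\bfH^{-1}(\D)\to\bfH_0^1(\D)$.

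Next I would isolate the compact part. By the Rellich--Kondrachov theorem the embedding $\bfH_0^1(\D)\hookrightarrow\bfL^2(\D)$ is compact (using that $\D$ is a bounded Lipschitz domain, \ref{A1}), and the map $\mathcal{I}\bfw=(\bfw,\cdot)_{\bfL^2(\D)}$ factors through this embedding followed by the continuous inclusion $\bfL^2(\D)\hookrightarrow\bfH^{-1}(\D)$; hence $\mathcal{I}\colon\bfH_0^1(\D)\to\bfH^{-1}(\D)$ is compact. Consequently $K:=\mathcal{B}^{-1}\mathcal{I}\colon\bfH_0^1(\D)\to\bfH_0^1(\D)$ is compact, being the composition of a compact operator with the bounded operator $\mathcal{B}^{-1}$.

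Then, for any $\sigma\in\R$, I would use the factorisation
\begin{align*}
\mathcal{G}-\sigma\,\mathcal{I}
\;=\;\mathcal{B}-(\sigma+\beta)\,\mathcal{I}
\;=\;\mathcal{B}\bigl(\mathrm{Id}_{\bfH_0^1(\D)}-(\sigma+\beta)\,K\bigr),
\end{align*}
where $\mathrm{Id}_{\bfH_0^1(\D)}$ is the identity on $\bfH_0^1(\D)$. Since $\mathcal{B}$ is an isomorphism, $\mathcal{G}-\sigma\,\mathcal{I}$ shares all Fredholm properties with $\mathrm{Id}_{\bfH_0^1(\D)}-(\sigma+\beta)\,K$, which by the Riesz--Schauder theorem (e.g.\ \cite[Thrm.~2.2]{GH94}, \cite[Thrm.~2.34]{McLean}) is Fredholm of index $0$ with finite-dimensional kernel and cokernel of equal dimension. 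Taking $\sigma=0$ already shows that $\mathcal{G}$ itself is Fredholm of index $0$. For the alternative: either $\ker(\mathcal{G}-\sigma\,\mathcal{I})\neq\{\mathbf{0}\}$, which is case (i); or $\ker(\mathcal{G}-\sigma\,\mathcal{I})=\{\mathbf{0}\}$, in which case the index-$0$ property forces the cokernel to be trivial as well, so $\mathcal{G}-\sigma\,\mathcal{I}$ is bijective, and the bounded inverse theorem provides continuity of the inverse --- this is case (ii). The two cases are mutually exclusive, so exactly one occurs.

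The argument is essentially routine; the only points requiring a little attention are the compactness of $\mathcal{I}\colon\bfH_0^1(\D)\to\bfH^{-1}(\D)$ (via Rellich--Kondrachov and the factorisation through $\bfL^2(\D)$) and the observation that none of the tools invoked --- Lax--Milgram, Riesz--Schauder, the open mapping theorem --- require symmetry. Thus the non-symmetry of $\langle\mathcal{G}\cdot,\cdot\rangle$, which is precisely what distinguishes $\calJ{\bfu}$ from a genuine Hessian, poses no obstacle here.
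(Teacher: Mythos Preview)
Your proof is correct and follows the standard textbook route (shift to a coercive operator via Lax--Milgram, factor out a compact perturbation of the identity, apply Riesz--Schauder). The paper itself does not prove this lemma at all: it is stated as a classical result and simply referred to \cite[Thrm.~2.2]{GH94} and \cite[Thrm.~2.34]{McLean}, so your argument is precisely the kind of proof one would find in those references.
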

From Lemma \ref{lem:Fredholm-alt-J}, in conjunction with the G\r{a}rding inequality from Lemma \ref{lem:J-Garding}, the following statement can be made on the spectral structure of the operator \(\calJ{\bfu}\).
\begin{lemma}[Spectral structure of $\calJ{\bfu}$]\label{lem:spectral-structure-J}
If \ref{A1}--\ref{A3} and $\bfu\in\mathbb{S}$, 
then $\calJ{\bfu}:\bfH_0^1(\D)\to\bfH^{-1}(\D)$ is a Fredholm operator of index~$0$ with compact resolvent.  
In particular:
\begin{itemize}
  \item The eigenvalue problem 
        \begin{align*}
          \calJ{\bfu}\bfv=\mu\,\mathcal I\,\bfv \quad\text{in }\bfH^{-1}(\D)
        \end{align*}
        admits a discrete set of real eigenvalues
        \begin{align}
        \label{def-ev-set-J-operator}
          \Lambda(\bfu) :=\{\mu\in\mathbb{R}:\exists\,\bfv\in\bfH_0^1(\D)\setminus\{\mathbf0\}\text{ with }
          \calJ{\bfu}\bfv=\mu\,\mathcal I\,\bfv\},
        \end{align}
        each of finite algebraic multiplicity and without finite accumulation points.  
        For every $\sigma\in\mathbb{R}\setminus\Lambda(\bfu)$, the shifted operator
        $\calJ{\bfu}-\sigma\mathcal I:\bfH_0^1(\D)\to\bfH^{-1}(\D)$ is bijective with a bounded inverse.
  \item Whenever $\calJsigma{\bfu} = \calJ{\bfu}-\sigma\mathcal I$ is invertible,
        the map
        \begin{align*}
          \calJsigma{\bfu}^{-1}\mathcal I:\bfL^2(\D)\to\bfH_0^1(\D)
        \end{align*}
        is compact.
\end{itemize}
\end{lemma}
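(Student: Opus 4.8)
The plan is to combine three ingredients that are already available: the G\r{a}rding inequality of Lemma~\ref{lem:J-Garding}, the abstract Fredholm alternative of Lemma~\ref{lem:Fredholm-alt-J}, and the compactness of the Rellich embedding $\bfH^1_0(\D)\hookrightarrow\bfL^2(\D)$ (valid since $\D$ is a bounded Lipschitz domain by~\ref{A1}). First I would observe that, by Lemma~\ref{lem:J-Garding}, $\calJ{\bfu}$ satisfies a G\r{a}rding inequality with coercivity constant $\tfrac18$, and that for any shift $\sigma\in\R$ the form $\langle(\calJ{\bfu}-\sigma\mathcal I)\bfv,\bfv\rangle$ satisfies the same inequality with $\sigma_0(\bfu)$ replaced by $\sigma_0(\bfu)+\sigma$. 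Applying Lemma~\ref{lem:Fredholm-alt-J} to $\mathcal G=\calJ{\bfu}$ then shows at once that $\calJ{\bfu}$ is Fredholm of index $0$ and yields, for every $\sigma\in\R$, the dichotomy: either there is a nontrivial $\bfv\in\bfH^1_0(\D)$ with $\calJ{\bfu}\bfv=\sigma\mathcal I\bfv$, i.e.\ $\sigma\in\Lambda(\bfu)$ in the sense of~\eqref{def-ev-set-J-operator}, or $\calJ{\bfu}-\sigma\mathcal I:\bfH^1_0(\D)\to\bfH^{-1}(\D)$ is bijective with bounded inverse. Since all spaces are real and $\sigma$ is real, alternative~(i) of Lemma~\ref{lem:Fredholm-alt-J} is exactly the statement $\sigma\in\Lambda(\bfu)$, so this proves the claim of the second bullet for $\sigma\notin\Lambda(\bfu)$. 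Moreover, choosing $\sigma\le-\sigma_0(\bfu)$ renders $\calJ{\bfu}-\sigma\mathcal I$ coercive on $\bfH^1_0(\D)$, hence invertible by Lax--Milgram; in particular the resolvent set of $\calJ{\bfu}$ is nonempty, and I fix once and for all such a $\sigma_\ast\le-\sigma_0(\bfu)$ and set $T:=(\calJ{\bfu}-\sigma_\ast\mathcal I)^{-1}\mathcal I$.

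Next I would establish the compactness statements. The crucial observation is that $\mathcal I:\bfL^2(\D)\to\bfH^{-1}(\D)$, $\mathcal I\bfw=(\bfw,\cdot)_{\bfL^2(\D)}$, is a compact operator: writing $\iota:\bfH^1_0(\D)\hookrightarrow\bfL^2(\D)$ for the Rellich embedding and $R:\bfL^2(\D)\to(\bfL^2(\D))^\ast$ for the Riesz isomorphism, one has $\mathcal I=\iota^\ast R$, a composition of the bounded operator $R$ with the compact operator $\iota^\ast$ (Schauder's theorem). Since $\calJ{\bfu}-\sigma\mathcal I$ has a bounded inverse $\bfH^{-1}(\D)\to\bfH^1_0(\D)$ for every $\sigma\notin\Lambda(\bfu)$, the composition $(\calJ{\bfu}-\sigma\mathcal I)^{-1}\mathcal I:\bfL^2(\D)\to\bfH^1_0(\D)$ is compact; this is the third bullet, and composing once more with $\iota$ shows that $\calJ{\bfu}$ has compact resolvent. (If one prefers to avoid Schauder's theorem, the compactness of $(\calJ{\bfu}-\sigma\mathcal I)^{-1}\mathcal I$ follows directly: a bounded sequence $\bfw_n$ in $\bfL^2(\D)$ produces a bounded sequence $\bfz_n:=(\calJ{\bfu}-\sigma\mathcal I)^{-1}\mathcal I\bfw_n$ in $\bfH^1_0(\D)$, which has an $\bfL^2$-convergent subsequence by Rellich; feeding the differences $\bfz_n-\bfz_m$ into the G\r{a}rding inequality together with the identity $\langle(\calJ{\bfu}-\sigma\mathcal I)(\bfz_n-\bfz_m),\bfz_n-\bfz_m\rangle=(\bfw_n-\bfw_m,\bfz_n-\bfz_m)_{\bfL^2(\D)}$ then upgrades this to an $\bfH^1$-convergent subsequence.)

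Finally I would deduce the discreteness of $\Lambda(\bfu)$ from Riesz--Schauder theory applied to the compact operator $T$ on $\bfH^1_0(\D)$. For $\mu\neq\sigma_\ast$, applying $(\calJ{\bfu}-\sigma_\ast\mathcal I)^{-1}$ to the identity $\calJ{\bfu}-\mu\mathcal I=(\calJ{\bfu}-\sigma_\ast\mathcal I)-(\mu-\sigma_\ast)\mathcal I$ gives the operator identity $(\calJ{\bfu}-\sigma_\ast\mathcal I)^{-1}(\calJ{\bfu}-\mu\mathcal I)=I-(\mu-\sigma_\ast)T$ on $\bfH^1_0(\D)$. Hence $\mu\in\Lambda(\bfu)$ if and only if $(\mu-\sigma_\ast)^{-1}$ is a nonzero eigenvalue of $T$, and, more generally, the generalized eigenspace of $\calJ{\bfu}$ at $\mu$ (interpreted via this identity as $\bigcup_k\ker\bigl(I-(\mu-\sigma_\ast)T\bigr)^k$) coincides with the generalized eigenspace of $T$ at $(\mu-\sigma_\ast)^{-1}$. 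Since the nonzero eigenvalues of a compact operator are isolated, accumulate only at $0$, and have finite algebraic multiplicity, transporting this through the homeomorphism $\mu\mapsto(\mu-\sigma_\ast)^{-1}$ of $\R\setminus\{\sigma_\ast\}$ onto $\R\setminus\{0\}$ shows that $\Lambda(\bfu)$ is a discrete subset of $\R$ without finite accumulation point, each $\mu\in\Lambda(\bfu)$ of finite algebraic multiplicity. Together with the two previous steps this establishes all assertions of the lemma.

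The main obstacle is not analytic but one of bookkeeping in the last step: $\calJ{\bfu}$ is not an endomorphism of a single Hilbert space but a map $\bfH^1_0(\D)\to\bfH^{-1}(\D)$, so the notions ``eigenvalue'', ``algebraic multiplicity'', and ``Jordan chain'' for $\calJ{\bfu}$ have to be transported consistently to the genuine compact operator $T$ on $\bfH^1_0(\D)$, and one must verify that this transport respects generalized eigenspaces. Once the correspondence $\mu\leftrightarrow(\mu-\sigma_\ast)^{-1}$ is set up carefully, the remaining arguments are the standard Fredholm/Riesz--Schauder package and require no estimates beyond those already proved in Lemmas~\ref{lem:J-Garding} and~\ref{lem:Fredholm-alt-J}.
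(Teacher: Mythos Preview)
Your proposal is correct and follows essentially the same route as the paper: G\r{a}rding inequality (Lemma~\ref{lem:J-Garding}) $\Rightarrow$ Fredholm alternative (Lemma~\ref{lem:Fredholm-alt-J}) $\Rightarrow$ compactness of $\calJsigma{\bfu}^{-1}\mathcal I$ via Rellich/Schauder $\Rightarrow$ transport of spectral properties from the compact operator $T$ back to $\calJ{\bfu}$ through the map $\mu\mapsto(\mu-\sigma_\ast)^{-1}$. If anything, you are slightly more careful than the paper on two points: you explicitly secure a nonempty resolvent set by choosing $\sigma_\ast\le-\sigma_0(\bfu)$ (so Lax--Milgram applies), and you spell out the correspondence of generalized eigenspaces to justify the claim on finite algebraic multiplicity, whereas the paper simply fixes ``$\sigma\in\R\setminus\Lambda(\bfu)$'' and appeals to compact-operator spectral theory.
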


\begin{proof}
By Lemma~\ref{lem:J-Garding}, there exist $\alpha>0$ and $\sigma_0=\sigma_0(\bfu)>0$ such that
\[
\langle \calJ{\bfu}\bfv,\bfv\rangle \ge \alpha\|\bfv\|_{\bfH^1(\D)}^2-\sigma_0\|\bfv\|_{\bfL^2(\D)}^2
\qquad\mbox{for all }\,\bfv\in\bfH_0^1(\D).
\]
Hence Lemma~\ref{lem:Fredholm-alt-J} applies to $\mathcal G:=\calJ{\bfu}$ and yields that
$\calJ{\bfu}:\bfH_0^1(\D)\to\bfH^{-1}(\D)$ is Fredholm of index $0$ and that, for every $\sigma\in\mathbb R$,
either $\sigma\in\Lambda(\bfu)$ or $\calJ{\bfu}-\sigma\mathcal I$ is bijective with bounded inverse.
In particular, for every $\sigma\in\mathbb R\setminus\Lambda(\bfu)$ the operator
$\calJsigma{\bfu}:=\calJ{\bfu}-\sigma\mathcal I$ is invertible.

Moreover, the embedding $\bfL^2(\D)\hookrightarrow\bfH^{-1}(\D)$ is compact as the adjoint of the compact embedding
$\bfH_0^1(\D)\hookrightarrow\bfL^2(\D)$. Therefore, whenever $\calJsigma{\bfu}$ is invertible, the map
\[
\calJsigma{\bfu}^{-1}\mathcal I:\bfL^2(\D)\to\bfH_0^1(\D)
\]
is compact, and by restriction also
$\calJsigma{\bfu}^{-1}\mathcal I:\bfH_0^1(\D)\to\bfH_0^1(\D)$ is compact.

Fix $\sigma\in\mathbb R\setminus\Lambda(\bfu)$, then for $\mu\in\mathbb R$ and $\bfv\neq \mathbf0$,
\[
\calJ{\bfu}\bfv=\mu\,\mathcal I\,\bfv
\quad\Longleftrightarrow\quad
\calJsigma{\bfu}^{-1}\mathcal I \bfv=(\mu-\sigma)^{-1}\bfv.
\]
Thus, $(\mu-\sigma)^{-1}$ is a nonzero eigenvalue of the compact operator $\calJsigma{\bfu}^{-1}\mathcal I$.
By spectral theory for compact operators, $\Lambda(\bfu)$ is discrete, each eigenvalue has finite algebraic multiplicity,
and $\Lambda(\bfu)$ has no finite accumulation points. 
Finally, $\calJ{\bfu}\bfv=\mu\,\mathcal I\,\bfv$ can only have real eigenvalues because $\mu=\frac{\langle \calJ{\bfu}\bfv,\bfv\rangle}{\langle \mathcal I\bfv,\bfv\rangle}$ is real in the given setting.
\end{proof}

We conclude that $\calJsigma{\bfu}=\calJ{\bfu}-\sigma\mathcal I$ is inf-sup stable on $\bfH^1_0(\D)$ for every shift $\sigma\notin \Lambda(\bfu)$. 
\begin{lemma}[inf--sup stability]\label{lem:infsup}
Assume \ref{A1}--\ref{A3}, fix $\bfu\in\mathbb{S}$, and let $\sigma\notin\Lambda(\bfu)$.
Then there exists a constant $c_{\mathrm{is}}(\sigma,\bfu)>0$ such that
\begin{align*}
  \inf_{\bfv\in\bfH_0^1(\D)}
  \sup_{\bfw\in\bfH_0^1(\D)}
  \frac{\langle \calJsigma{\bfu}\bfv,\bfw\rangle}
       {\|\bfv\|_{\bfH^1(\D)}\,\|\bfw\|_{\bfH^1(\D)}}
  \;\ge\;
  c_{\mathrm{is}}(\sigma,\bfu).
\end{align*}
Consequently, for all $F\in\bfH^{-1}(\D)$,
\begin{align*}
  \|\calJsigma{\bfu}^{-1}F\|_{\bfH^1(\D)}
  \;\le\;
  c_{\mathrm{is}}(\sigma,\bfu)^{-1}\,\|F\|_{\bfH^{-1}(\D)}.
\end{align*}
In particular, for all $\bfv\in\bfL^2(\D)$,
\begin{align*}
  \|\calJsigma{\bfu}^{-1}\mathcal I\,\bfv\|_{\bfH^1(\D)}
  \;\lesssim\;
  c_{\mathrm{is}}(\sigma,\bfu)^{-1}\,\|\bfv\|_{\bfL^2(\D)},
\end{align*}
which yields again the compactness of $\calJsigma{\bfu}^{-1}\mathcal I : \bfL^2(\D) \rightarrow \bfH^1_0(\D)$.
\end{lemma}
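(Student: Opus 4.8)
The plan is to read the inf--sup bound off directly from the bounded invertibility of $\calJsigma{\bfu}$ already established in Lemma~\ref{lem:spectral-structure-J}, so that this lemma becomes essentially a reformulation of the previous one via the standard equivalence between bounded invertibility of a linear operator and the inf--sup (Babu\v{s}ka--Ne\v{c}as) condition. Concretely, since $\sigma\notin\Lambda(\bfu)$, Lemma~\ref{lem:spectral-structure-J} guarantees that $\calJsigma{\bfu}:\bfH_0^1(\D)\to\bfH^{-1}(\D)$ is a bijection with bounded inverse, and I would set
\begin{align*}
c_{\mathrm{is}}(\sigma,\bfu):=\big\|\calJsigma{\bfu}^{-1}\big\|_{\mathcal{L}(\bfH^{-1}(\D),\,\bfH^1(\D))}^{-1}>0.
\end{align*}
The first step is to observe that, writing $\bfv=\calJsigma{\bfu}^{-1}(\calJsigma{\bfu}\bfv)$, one gets $\|\bfv\|_{\bfH^1(\D)}\le c_{\mathrm{is}}(\sigma,\bfu)^{-1}\,\|\calJsigma{\bfu}\bfv\|_{\bfH^{-1}(\D)}$, i.e.\ $\|\calJsigma{\bfu}\bfv\|_{\bfH^{-1}(\D)}\ge c_{\mathrm{is}}(\sigma,\bfu)\,\|\bfv\|_{\bfH^1(\D)}$ for every $\bfv\in\bfH_0^1(\D)$. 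Since by definition of the dual norm $\|\calJsigma{\bfu}\bfv\|_{\bfH^{-1}(\D)}=\sup_{\bfw\neq\mathbf0}\langle\calJsigma{\bfu}\bfv,\bfw\rangle/\|\bfw\|_{\bfH^1(\D)}$, dividing by $\|\bfv\|_{\bfH^1(\D)}$ and taking the infimum over $\bfv\neq\mathbf0$ yields exactly the asserted inf--sup inequality with constant $c_{\mathrm{is}}(\sigma,\bfu)$.

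Next I would derive the three displayed consequences. The bound $\|\calJsigma{\bfu}^{-1}F\|_{\bfH^1(\D)}\le c_{\mathrm{is}}(\sigma,\bfu)^{-1}\|F\|_{\bfH^{-1}(\D)}$ is just the inf--sup inequality applied with $\bfv=\calJsigma{\bfu}^{-1}F$ (equivalently, it is the definition of $c_{\mathrm{is}}$). For the $\bfL^2$-version I would estimate $\|\mathcal I\bfv\|_{\bfH^{-1}(\D)}$ via $|\langle\mathcal I\bfv,\bfw\rangle|=|(\bfv,\bfw)_{\bfL^2(\D)}|\le\|\bfv\|_{\bfL^2(\D)}\|\bfw\|_{\bfL^2(\D)}\lesssim\|\bfv\|_{\bfL^2(\D)}\|\bfw\|_{\bfH^1(\D)}$, using the continuous embedding $\bfH_0^1(\D)\hookrightarrow\bfL^2(\D)$, hence $\|\mathcal I\bfv\|_{\bfH^{-1}(\D)}\lesssim\|\bfv\|_{\bfL^2(\D)}$, and then compose with the previous bound. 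Finally, compactness of $\calJsigma{\bfu}^{-1}\mathcal I:\bfL^2(\D)\to\bfH_0^1(\D)$ follows by writing it as the composition of the compact map $\mathcal I:\bfL^2(\D)\to\bfH^{-1}(\D)$ (the adjoint of the compact embedding $\bfH_0^1(\D)\hookrightarrow\bfL^2(\D)$, exactly as in the proof of Lemma~\ref{lem:spectral-structure-J}) with the bounded operator $\calJsigma{\bfu}^{-1}:\bfH^{-1}(\D)\to\bfH_0^1(\D)$, which re-proves the compactness already recorded there.

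I do not expect a genuine obstacle in this lemma: it is a soft functional-analytic consequence of Lemma~\ref{lem:spectral-structure-J}. The only points needing a word of care are (i) the consistent use of the duality-pairing convention, i.e.\ that $\|\cdot\|_{\bfH^{-1}(\D)}$ is realized as the supremum of $\langle\cdot,\bfw\rangle$ over $\bfw\in\bfH_0^1(\D)$ with $\|\bfw\|_{\bfH^1(\D)}=1$, and (ii) the routine check that the bilinear form $\langle\calJsigma{\bfu}\cdot,\cdot\rangle$ is bounded on $\bfH_0^1(\D)\times\bfH_0^1(\D)$, so that $\calJsigma{\bfu}$ indeed maps into $\bfH^{-1}(\D)$; this boundedness comes from the Sobolev embeddings available for $d\le3$ and was already used implicitly for $\calA{\bfu}$ in Section~\ref{section:3} and for the additional quartic contributions in~\eqref{eq:J}.
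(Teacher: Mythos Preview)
Your argument is correct and in fact more economical than the paper's. You read the inf--sup bound off directly from the bounded invertibility of $\calJsigma{\bfu}$ granted by Lemma~\ref{lem:spectral-structure-J}, setting $c_{\mathrm{is}}(\sigma,\bfu)=\|\calJsigma{\bfu}^{-1}\|_{\mathcal L(\bfH^{-1},\bfH^1)}^{-1}$; this is the sharp constant and the deduction is immediate. The paper instead goes through a constructive Babu\v{s}ka--Ne\v{c}as argument: it uses the G\r{a}rding inequality to make $\calJsigma{\bfu}+\nu\mathcal I$ coercive for a suitable $\nu$, then for each $\bfv$ builds an explicit test function $\bfw=\bfv+\nu(\calJsigma{\bfu}^*)^{-1}\mathcal I\bfv$ so that $\langle\calJsigma{\bfu}\bfv,\bfw\rangle=\langle(\calJsigma{\bfu}+\nu\mathcal I)\bfv,\bfv\rangle\ge\tfrac18\|\bfv\|_{\bfH^1}^2$, while $\|\bfw\|_{\bfH^1}\lesssim\|\bfv\|_{\bfH^1}$. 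This yields a (generally non-sharp) constant $c_{\mathrm{is}}=(8\tilde c(\sigma,\bfu))^{-1}$ with $\tilde c$ depending on $\nu$ and on $\|(\calJsigma{\bfu}^*)^{-1}\|$. Since the paper's route still invokes bounded invertibility (of the adjoint) from Lemma~\ref{lem:spectral-structure-J}, it is not more self-contained; its payoff is the explicit test-function construction, which is the template one would reuse when proving a \emph{discrete} inf--sup condition for a Galerkin subspace. For the continuous statement as formulated here, your direct argument suffices.
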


\begin{proof}
The proof follows a standard Babu{\v{s}}ka--Ne{\v{c}}as inf--sup argument. By Lemma~\ref{lem:J-Garding} there exists
$\sigma_0(\bfu)\in\R$ such that, for all $\bfv\in\bfH_0^1(\D)$,
\begin{align*}
  \langle \calJ{\bfu}\bfv,\bfv\rangle
  \;\ge\;
  \tfrac18\|\bfv\|_{\bfH^1(\D)}^2 - \sigma_0(\bfu)\|\bfv\|_{\bfL^2(\D)}^2.
\end{align*}
Fix $\nu:=\sigma_0(\bfu)+\max\{\sigma,0\}$, then for all $\bfv\in\bfH_0^1(\D)$,
\begin{align}\label{star-inf-sup}
  \langle (\calJsigma{\bfu}+\nu\mathcal I)\bfv,\bfv\rangle
  = \langle \calJ{\bfu}\bfv,\bfv\rangle - \sigma \|\bfv\|_{\bfL^2(\D)}^2 + \nu\|\bfv\|_{\bfL^2(\D)}^2
  \;\ge\; \tfrac18\|\bfv\|_{\bfH^1(\D)}^2.
\end{align}
For $\sigma\notin\Lambda(\bfu)$,\, $\calJsigma{\bfu}:\bfH_0^1(\D)\to\bfH^{-1}(\D)$ is bijective; hence its adjoint
$\calJsigma{\bfu}^*:\bfH_0^1(\D)\to\bfH^{-1}(\D)$ is also bijective with bounded inverse
$(\calJsigma{\bfu}^*)^{-1}\in\mathcal L(\bfH^{-1}(\D),\bfH_0^1(\D))$. Given $\bfv\in\bfH_0^1(\D)$, define
\begin{align*}
  \bfz \;:=\; \nu\,(\calJsigma{\bfu}^*)^{-1}\,\mathcal I\bfv \;\in\; \bfH_0^1(\D),
  \qquad\text{so that}\qquad \calJsigma{\bfu}^* \bfz \;=\; \nu\,\mathcal I\bfv.
\end{align*}
With the test function $\bfw:=\bfv+\bfz$ we compute, using the definition of the adjoint and of $\mathcal I$,
\begin{align*}
  \langle \calJsigma{\bfu}\bfv,\bfw\rangle
  &= \langle \calJsigma{\bfu}\bfv,\bfv\rangle + \langle \calJsigma{\bfu}\bfv,\bfz\rangle
   = \langle \calJsigma{\bfu}\bfv,\bfv\rangle + \langle \bfv,\calJsigma{\bfu}^*\bfz\rangle \\
  &= \langle \calJsigma{\bfu}\bfv,\bfv\rangle + \nu\,\langle \bfv,\mathcal I\bfv\rangle
   = \langle (\calJsigma{\bfu}+\nu\mathcal I)\bfv,\bfv\rangle
  \;\overset{\eqref{star-inf-sup}}{\ge}\; \tfrac18\|\bfv\|_{\bfH^1(\D)}^2.
\end{align*}
Poincar\'e's inequality and the boundedness of
$(\calJsigma{\bfu}^*)^{-1}$ imply
\begin{align*}
  \|\bfz\|_{\bfH^1(\D)}
  &\le \nu\,\|(\calJsigma{\bfu}^*)^{-1}\|_{\mathcal L(\bfH^{-1}(\D),\bfH_0^1(\D))}\,\|\mathcal I\bfv\|_{\bfH^{-1}(\D)} 
  \lesssim \nu\,\|(\calJsigma{\bfu}^*)^{-1}\|_{\mathcal L(\bfH^{-1}(\D),\bfH_0^1(\D))}\,\|\bfv\|_{\bfH^1(\D)}.
\end{align*}
Consequently, for some constant $\tilde{c}(\sigma,\bfu)
:= 1 + c\,\nu\,\|(\calJsigma{\bfu}^*)^{-1}\|_{\mathcal L(\bfH^{-1}(\D),\bfH_0^1(\D))}$, it holds
\begin{align*}
  \|\bfw\|_{\bfH^1(\D)}
  \le \|\bfv\|_{\bfH^1(\D)} + \|\bfz\|_{\bfH^1(\D)}
  \le \tilde{c}(\sigma,\bfu)\,\|\bfv\|_{\bfH^1(\D)}.
\end{align*}
Therefore,
\begin{align*}
  \sup_{\bfw\in\bfH_0^1(\D)}
  \frac{\langle \calJsigma{\bfu}\bfv,\bfw\rangle}{\|\bfw\|_{\bfH^1(\D)}}
  \;\ge\;
  \frac{\langle \calJsigma{\bfu}\bfv,\bfv+\bfz\rangle}{\|\bfv+\bfz\|_{\bfH^1(\D)}}
  \;\ge\;
  \frac{\tfrac18\|\bfv\|_{\bfH^1(\D)}^2}{\tilde{c}(\sigma,\bfu)\,\|\bfv\|_{\bfH^1(\D)}}
  \;=\;
  \frac{1}{8\,\tilde{c}(\sigma,\bfu)}\,\|\bfv\|_{\bfH^1(\D)}.
\end{align*}
Dividing by $\|\bfv\|_{\bfH^1(\D)}$ and taking the infimum over $\bfv\neq \mathbf0$ yields the inf--sup stability with constant
$c_{\mathrm{is}}(\sigma,\bfu):=(8\,\tilde{c}(\sigma,\bfu))^{-1}>0$. The stability bound for $\calJsigma{\bfu}^{-1}$ follows from the Babu{\v{s}}ka--Ne{\v{c}}as theorem.
Finally, for $\bfv\in\bfL^2(\D)$ we have with Poincar\'e $\|\mathcal I\bfv\|_{\bfH^{-1}(\D)}\lesssim \|\bfv\|_{\bfL^2(\D)}$, hence
$\|\calJsigma{\bfu}^{-1}\mathcal I\,\bfv\|_{\bfH^1(\D)}
\lesssim  c_{\mathrm{is}}(\sigma,\bfu)^{-1}\,\|\bfv\|_{\bfL^2(\D)}$. 
\end{proof}
The next lemma establishes an identity in the gauge-direction which will be afterwards used to prove a particular phase invariance of the $J$-operator. 
\begin{lemma}[Gauge-direction identity for $\calJ{\bfu}$]\label{lem:J-gauge}\quad\\
Assume \ref{A1}-\ref{A3} and let $\bfu\in\bfH^1_0(\D)$ be arbitrary with $\|\bfu\|_{\bfL^2(\D)}\neq0$. 
Then, it holds
\begin{align*}
\langle \calJ{\bfu}[\ci\bfu],\bfw\rangle
= \langle \calJ{\bfu}\bfu,-\ci\,\bfw\rangle
\qquad \mbox{for all } \bfw\in\bfH^1_0(\D). 
\end{align*}
\end{lemma}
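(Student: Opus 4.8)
The plan is to bypass the explicit formula \eqref{eq:J} for $\calJ{\bfu}$ altogether and rely only on two structural features: the phase invariance $\calA{\exp(\ci\theta)\bfu}=\calA{\bfu}$ and the complex bilinear structure built into the definition \eqref{def-Lu} of $\calA{\bfu}$.

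The first step is to record that $\langle\calA{\bfu}\bfv,\bfw\rangle$ is the real part of a sesquilinear form. Inspecting \eqref{def-Lu} term by term, every summand of the integrands is a product of an expression that is $\C$-linear in the argument $\bfv$ (one of $v_j$, $\nabla v_j$, $\partial_{x_1}v_j$, possibly multiplied by a fixed function of $x$ and $\bfu$) with an expression that is $\C$-antilinear in the test function $\bfw$ (namely $\conjw{j}$ or $\nabla\conjw{j}$). Hence there is a form $b_{\bfu}:\bfH^1_0(\D)\times\bfH^1_0(\D)\to\C$, $\C$-linear in the first argument and $\C$-antilinear in the second, with
\begin{align*}
\langle\calA{\bfu}\bfv,\bfw\rangle=\Re\, b_{\bfu}(\bfv,\bfw)\qquad\text{for all }\bfv,\bfw\in\bfH^1_0(\D).
\end{align*}
Since $\overline{-\ci}=\ci$, this representation gives, for all $\bfv,\bfw\in\bfH^1_0(\D)$,
\begin{align*}
\langle\calA{\bfu}[\ci\bfv],\bfw\rangle=\Re\big(\ci\,b_{\bfu}(\bfv,\bfw)\big)=\Re\, b_{\bfu}(\bfv,-\ci\bfw)=\langle\calA{\bfu}\bfv,-\ci\bfw\rangle,
\end{align*}
and specializing to $\bfv=\bfu$ yields the key identity $\langle\calA{\bfu}[\ci\bfu],\bfw\rangle=\langle\calA{\bfu}\bfu,-\ci\bfw\rangle$.

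It remains to replace $\calA{}$ by $\calJ{}$ on both sides. On the right this is immediate from \eqref{identity-A-J}, which gives $\calJ{\bfu}\bfu=\calA{\bfu}\bfu$. On the left, fix $\bfw\in\bfH^1_0(\D)$ and differentiate $\theta\mapsto\langle\calA{\exp(\ci\theta)\bfu}(\exp(\ci\theta)\bfu),\bfw\rangle$ at $\theta=0$: by the definition \eqref{def-J-op} of $\calJ{\bfu}$ as the Fr\'echet derivative of $\bfv\mapsto\calA{\bfv}\bfv$ and the chain rule (the curve $\theta\mapsto\exp(\ci\theta)\bfu$ has velocity $\ci\bfu$ at $\theta=0$), this derivative equals $\langle\calJ{\bfu}[\ci\bfu],\bfw\rangle$; on the other hand, by phase invariance the curve coincides with $\theta\mapsto\langle\calA{\bfu}(\exp(\ci\theta)\bfu),\bfw\rangle$, and since $\calA{\bfu}$ is a fixed bounded $\R$-linear operator its derivative is $\langle\calA{\bfu}[\ci\bfu],\bfw\rangle$. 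Hence $\langle\calJ{\bfu}[\ci\bfu],\bfw\rangle=\langle\calA{\bfu}[\ci\bfu],\bfw\rangle$, and together with the key identity from the previous step, $\langle\calJ{\bfu}[\ci\bfu],\bfw\rangle=\langle\calA{\bfu}\bfu,-\ci\bfw\rangle=\langle\calJ{\bfu}\bfu,-\ci\bfw\rangle$ for all $\bfw\in\bfH^1_0(\D)$, which is the claim.

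I expect the only mildly delicate point to be the term-by-term check that \emph{every} contribution to \eqref{def-Lu}---in particular the spin--orbit term $\ci k_0(\conjw{1}\partial_{x_1}v_1-\conjw{2}\partial_{x_1}v_2)$ and the density-weighted interaction terms---genuinely fits the pattern of being $\C$-linear in $\bfv$ and $\C$-antilinear in $\bfw$, so that a single complex form $b_{\bfu}$ can be extracted from under the real part; the remaining steps (differentiation under the pairing, invoking \eqref{identity-A-J}) are routine. As a consistency check one can, alternatively, read off directly from \eqref{eq:J} that the nonlinear correction terms distinguishing $\calJ{\bfu}$ from $\calA{\bfu}$ vanish at both $\bfv=\bfu$ and $\bfv=\ci\bfu$ (using $u_j\conjv{j}+v_j\conju{j}=0$ and $(\bfu,\bfv)_{\bfL^2(\D)}=0$ in the latter case), which reproves $\calJ{\bfu}\bfu=\calA{\bfu}\bfu$ and $\calJ{\bfu}[\ci\bfu]=\calA{\bfu}[\ci\bfu]$ by hand.
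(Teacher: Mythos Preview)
Your proof is correct and follows the same three-step skeleton as the paper: first reduce $\calJ{\bfu}[\ci\bfu]$ to $\calA{\bfu}[\ci\bfu]$, then pass the factor $\ci$ through $\calA{\bfu}$ to the test function, then invoke $\calA{\bfu}\bfu=\calJ{\bfu}\bfu$. The difference lies in how you justify the first two reductions. The paper plugs $\bfv=\ci\bfu$ directly into the explicit formula \eqref{eq:J} and checks by hand that the correction terms vanish (via $u_j\overline{\ci u_j}+\ci u_j\overline{u_j}=0$ and $(\bfu,\ci\bfu)_{\bfL^2}=0$), and then simply asserts that $\langle\calA{\bfu}[\ci\bfu],\bfw\rangle=\langle\calA{\bfu}\bfu,-\ci\bfw\rangle$ is ``easy to see from \eqref{def-Lu}''. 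You instead argue structurally: the sesquilinear-form representation $\langle\calA{\bfu}\bfv,\bfw\rangle=\Re\,b_{\bfu}(\bfv,\bfw)$ handles the second step cleanly, and differentiating along the phase orbit $\theta\mapsto e^{\ci\theta}\bfu$ (using $\calA{e^{\ci\theta}\bfu}=\calA{\bfu}$, which the paper also exploits elsewhere) gives the first step without ever opening up \eqref{eq:J}. Your route is more conceptual and would transfer verbatim to any linearization $\calA{\bfu}$ with the same phase invariance and sesquilinear structure; the paper's route is a shorter direct computation tied to the specific formula. Your closing ``consistency check'' is in fact exactly the paper's argument.
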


\begin{proof}
Inserting $\bfv=\ci\bfu$ in \eqref{eq:J} yields 
\begin{align*}
\langle \calJ{\bfu}[\ci\bfu],\bfw\rangle
&= \langle \calA{\bfu}[\ci\bfu],\bfw\rangle \\[1mm]
&\quad
+ \Re\!\int_\D 
\frac{\hspace{-20pt}\beta_{12}}{\|\bfu\|_{\bfL^2(\D)}^2}
\Big[(u_2\overline{\ci u_2}+ \ci u_2\overline{u_2})\,u_1\overline{w_1}
+ (u_1\overline{\ci u_1}+\ci u_1\overline{u_1})\,u_2\overline{w_2}\Big]\dx \\[1mm]
&\quad
+ \Re\!\int_\D\sum_{j=1}^2
\frac{\hspace{-20pt}\beta_{jj}}{\|\bfu\|_{\bfL^2(\D)}^2}\,
(u_j\overline{\ci u_j}+\ci u_j\overline{u_j})\,u_j\overline{w_j} \dx \\[1mm]
&\quad
- 2\,\frac{(\bfu,\ci\bfu)_{\bfL^2(\D)}}{\|\bfu\|_{\bfL^2(\D)}^4}\,
\Re\!\int_\D\Big(
\sum_{j=1}^2\beta_{jj}|u_j|^2u_j\overline{w_j}
+ \beta_{12}\big(|u_1|^2u_2\overline{w_2}+|u_2|^2u_1\overline{w_1}\big)
\Big)\dx.
\end{align*}
Since $u_j\overline{(\ci u_j)}+\ci u_j\overline{u_j} = - \ci |u_j|^2 + \ci |u_j|^2 =0$ for $j=1,2$ and since $(\bfu,\ci\bfu)_{\bfL^2(\D)}=0$, this simplifies to
\begin{align*}
\langle \calJ{\bfu}[\ci\bfu],\bfw\rangle \,\, = \,\, \langle \calA{\bfu}[\ci\bfu],\bfw\rangle.
\end{align*}
It is easy to see from \eqref{def-Lu} that $\langle \calA{\bfu}[\ci\bfu],\bfw\rangle = \langle \calA{\bfu}\bfu,-\ci\bfw\rangle$. Thus
\begin{align*}
\langle \calJ{\bfu}[\ci\bfu],\bfw\rangle
=\langle \calA{\bfu}\bfu,-\ci\,\bfw\rangle.
\end{align*}
Finally, the identity $\calA{\bfu}\bfu=\calJ{\bfu}\bfu$ in \eqref{identity-A-J} finishes the proof.
\end{proof}

Finally, the following lemma characterizes how the operator $\calJ{\bfu}\bfv$ reacts to phase changes.

\begin{lemma}[Phase identity for $\calJ{\bfu}$]\label{lem:J-phase-relation} \quad\\
Let assumptions~\ref{A1}--\ref{A3} hold and let $\bfu\in\bfH^1_0(\D) \setminus \{\mathbf{0}\}$.  Then, for all $\bfv,\bfw\in\bfH^1_0(\D)$ and $\omega\in\R$,
\begin{align}\label{eq:dual-phase-J}
  \big\langle \calJ{e^{\ci\omega}\bfu}[e^{\ci\omega}\bfv],\,\bfw\big\rangle
  \;=\;
  \big\langle \calJ{\bfu}\bfv,\,e^{-\ci\omega}\bfw\big\rangle.
\end{align}
\end{lemma}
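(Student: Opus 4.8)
The plan is to avoid manipulating the explicit formula \eqref{eq:J} and instead lift the identity to the scaling‑invariant map $F(\bfz) := \calA{\bfz}\bfz \in \bfH^{-1}(\D)$, whose $\R$‑Fréchet derivative is $\calJ{\bfz}$ by \eqref{def-J-op}, and then differentiate a linear intertwining relation. For $\omega\in\R$ I would introduce the bounded $\R$‑linear bijection $M_\omega:\bfH^1_0(\D)\to\bfH^1_0(\D)$, $M_\omega\bfv := e^{\ci\omega}\bfv$, together with its transpose $M_\omega^{\ast}:\bfH^{-1}(\D)\to\bfH^{-1}(\D)$ defined by $\langle M_\omega^{\ast}f,\bfw\rangle := \langle f, M_\omega\bfw\rangle$. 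The key preliminary fact is the phase covariance of $\calA{\cdot}$ in two forms: first, since the coefficients of $\calA{\bfu}$ in \eqref{def-Lu} depend on $\bfu$ only through the densities $|u_j|^2$ and the norm $\|\bfu\|_{\bfL^2(\D)}$, one has $\calA{e^{\ci\omega}\bfu}=\calA{\bfu}$ (exactly as already used in the proof of Lemma~\ref{lem:J-mult-2-nonsym}); second, inspecting \eqref{def-Lu} term by term and using $e^{\ci\omega}\overline{w}=\overline{e^{-\ci\omega}w}$ to move every phase factor multiplying a $v_j$ onto the conjugated slot without affecting the outer $\Re$, I would verify $\langle \calA{\bfu}(e^{\ci\omega}\bfv),\bfw\rangle = \langle \calA{\bfu}\bfv, e^{-\ci\omega}\bfw\rangle$, i.e.\ $\calA{\bfu}\circ M_\omega = M_{-\omega}^{\ast}\circ\calA{\bfu}$.

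Combining these, for any $\bfz\neq\mathbf{0}$ one gets
$$F(e^{\ci\omega}\bfz) = \calA{e^{\ci\omega}\bfz}(e^{\ci\omega}\bfz) = \calA{\bfz}(e^{\ci\omega}\bfz) = M_{-\omega}^{\ast}\bigl(\calA{\bfz}\bfz\bigr) = M_{-\omega}^{\ast}F(\bfz),$$
so $F\circ M_\omega = M_{-\omega}^{\ast}\circ F$ on $\bfH^1_0(\D)\setminus\{\mathbf{0}\}$. Since $M_\omega$ and $M_{-\omega}^{\ast}$ are bounded linear maps, hence equal to their own derivatives, the chain rule applied to this relation at the point $\bfu$ (admissible because $e^{\ci\omega}\bfu\neq\mathbf{0}$) yields $F'(e^{\ci\omega}\bfu)\,[e^{\ci\omega}\bfv] = M_{-\omega}^{\ast}\bigl(F'(\bfu)\bfv\bigr)$ for all $\bfv\in\bfH^1_0(\D)$, that is, $\calJ{e^{\ci\omega}\bfu}[e^{\ci\omega}\bfv] = M_{-\omega}^{\ast}\bigl(\calJ{\bfu}\bfv\bigr)$ by \eqref{def-J-op}. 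Pairing with $\bfw\in\bfH^1_0(\D)$ and unfolding the definition of $M_{-\omega}^{\ast}$ gives $\langle \calJ{e^{\ci\omega}\bfu}[e^{\ci\omega}\bfv],\bfw\rangle = \langle \calJ{\bfu}\bfv, e^{-\ci\omega}\bfw\rangle$, which is \eqref{eq:dual-phase-J}.

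The only step requiring genuine care will be the term‑by‑term check that $\calA{\bfu}\circ M_\omega = M_{-\omega}^{\ast}\circ\calA{\bfu}$; in particular the Rabi cross term $\Omega\,\Re(v_1\overline{w_2}+v_2\overline{w_1})$ and the spin–orbit term $\ci k_0(\overline{w_1}\partial_{x_1}v_1-\overline{w_2}\partial_{x_1}v_2)$ each deserve a line, but in every case the scalar phase $e^{\ci\omega}$ multiplying $v_j$ can be rewritten as the factor $\overline{e^{-\ci\omega}w_k}$ on the adjacent conjugated argument, so the identity holds verbatim. As a fallback one may instead verify \eqref{eq:dual-phase-J} directly from \eqref{eq:J}, using $|e^{\ci\omega}u_j|^2 = |u_j|^2$, $(e^{\ci\omega}\bfu, e^{\ci\omega}\bfv)_{\bfL^2(\D)} = (\bfu,\bfv)_{\bfL^2(\D)}$, and $e^{\ci\omega}u_j\,\overline{e^{\ci\omega}v_j} = u_j\overline{v_j}$; but the derivative argument above is shorter and reuses the structure already established for $\calA{\cdot}$ and $F$.
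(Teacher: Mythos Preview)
Your proof is correct and follows essentially the same approach as the paper: both arguments rest on the phase--equivariance of the map $\bfz\mapsto\calA{\bfz}\bfz$ (your $F\circ M_\omega = M_{-\omega}^{\ast}\circ F$ is exactly the paper's identity $\langle \calA{e^{\ci\omega}\bfz}[e^{\ci\omega}\bfz],\bfw\rangle = \langle \calA{\bfz}\bfz, e^{-\ci\omega}\bfw\rangle$), followed by differentiation via the chain rule and the definition $\calJ{\bfz}=F'(\bfz)$. The only cosmetic difference is that you package the equivariance in operator form and factor it into the two sub-steps $\calA{e^{\ci\omega}\bfu}=\calA{\bfu}$ and $\calA{\bfu}\circ M_\omega=M_{-\omega}^{\ast}\circ\calA{\bfu}$, whereas the paper writes the same content as a scalar function $\Phi(t)$ vanishing identically and differentiates at $t=0$.
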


\begin{proof}
From the phase--equivariance of $\calA{\bfv}$ we have, for any $\bfz \in \bfH^1_0(\D)$, that
\begin{align}
\label{A-equivariance}
\langle \calA{e^{\ci\omega}\bfz}[e^{\ci\omega}\bfz],\bfw\rangle
= \langle \calA{\bfz}\bfz, e^{-\ci\omega}\bfw\rangle
\qquad\mbox{for all }\,\bfw\in\bfH^1_0(\D),~\omega\in\R.
\end{align}
Fix $\omega\in\R$ and $\bfu, \bfv,\bfw\in\bfH^1_0(\D)$ such that $\bfz=(\bfu+t\bfv) \not= \mathbf0$ in a neighbourhood of $t=0$, $t\in\R$. With this, consider the real-valued function
\begin{align*}
  \Phi(t)
  \;:=\;
  \big\langle \calA{e^{\ci\omega}(\bfu+t\bfv)}[e^{\ci\omega}(\bfu+t\bfv)],\,\bfw\big\rangle
  \;-\;
  \big\langle \calA{\bfu+t\bfv}[\bfu+t\bfv],\,e^{-\ci\omega}\bfw\big\rangle,
  \quad \mbox{for } (\bfu+t\bfv) \not= \mathbf0.
\end{align*}
By the phase--equivariance \eqref{A-equivariance} we have $\Phi(t)=0$ for all $t\in\R$ in a neighbourhood of $t=0$. Differentiating $\Phi(t)=0$ and recalling that $\calJ{\bfz} = \frac{\mbox{d}}{\mbox{d}\bfz} [\calA{\bfz} \bfz]$ (cf. \eqref{def-J-op}), the chain rule yields for $t=0$ that
\begin{align*}
  0
  \;=\;\Phi'(0)
  &= \big\langle \calJ{e^{\ci\omega}\bfu}[e^{\ci\omega}\bfv],\,\bfw\big\rangle
   -  \big\langle \calJ{\bfu}\bfv,\,e^{-\ci\omega}\bfw\big\rangle,
\end{align*}
which finishes the proof.
\end{proof}

As a direct consequence of Lemma~\ref{lem:J-phase-relation}, we obtain:
\begin{lemma}[Phase identity for $\calJsigma{\bfu}^{-1}\mathcal{I}$]\label{lem:J-phase-relation-inverse}\quad\\
Assume~\ref{A1}--\ref{A3} and $\bfu\in \mathbb{S}$. If $\sigma \not\in \Lambda(\bfu)$, then
\begin{align*}
  \calJsigma{e^{\ci\omega}\bfu}^{-1}\,\mathcal{I}\,[e^{\ci\omega}\bfu]
  \;=\;
  e^{\ci\omega}\,\calJsigma{\bfu}^{-1}\,\mathcal{I}\bfu
  \qquad \mbox{for all } \,\,\omega\in\R.
\end{align*}
\end{lemma}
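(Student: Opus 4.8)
The natural approach is to characterize both sides through the variational problems they solve and then invoke uniqueness. Set $\bfz := \calJsigma{\bfu}^{-1}\mathcal{I}\bfu \in \bfH^1_0(\D)$, i.e.\ the unique solution of
\begin{align*}
\langle \calJsigma{\bfu}\bfz,\bfv\rangle = (\bfu,\bfv)_{\bfL^2(\D)} \qquad \text{for all } \bfv\in\bfH^1_0(\D).
\end{align*}
The claim $\calJsigma{e^{\ci\omega}\bfu}^{-1}\mathcal{I}[e^{\ci\omega}\bfu] = e^{\ci\omega}\bfz$ will follow once we show that the phase-rotated function $e^{\ci\omega}\bfz$ solves the corresponding problem with $\bfu$ replaced by $e^{\ci\omega}\bfu$, namely
\begin{align*}
\langle \calJsigma{e^{\ci\omega}\bfu}[e^{\ci\omega}\bfz],\bfw\rangle = (e^{\ci\omega}\bfu,\bfw)_{\bfL^2(\D)} \qquad \text{for all } \bfw\in\bfH^1_0(\D),
\end{align*}
together with the fact that this latter problem is well-posed.

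For the identity, I would split $\calJsigma{e^{\ci\omega}\bfu} = \calJ{e^{\ci\omega}\bfu} - \sigma\mathcal{I}$ and treat the two contributions separately. On the $\calJ$-part, Lemma~\ref{lem:J-phase-relation} gives $\langle \calJ{e^{\ci\omega}\bfu}[e^{\ci\omega}\bfz],\bfw\rangle = \langle \calJ{\bfu}\bfz,e^{-\ci\omega}\bfw\rangle$. On the shift part, using that the inner product is the real part of a Hermitian pairing, $(e^{\ci\omega}\bfz,\bfw)_{\bfL^2(\D)} = \Re\int_\D \bfz\cdot\overline{e^{-\ci\omega}\bfw}\,\dx = (\bfz,e^{-\ci\omega}\bfw)_{\bfL^2(\D)}$. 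Combining these,
\begin{align*}
\langle \calJsigma{e^{\ci\omega}\bfu}[e^{\ci\omega}\bfz],\bfw\rangle
= \langle \calJ{\bfu}\bfz,e^{-\ci\omega}\bfw\rangle - \sigma(\bfz,e^{-\ci\omega}\bfw)_{\bfL^2(\D)}
= \langle \calJsigma{\bfu}\bfz,e^{-\ci\omega}\bfw\rangle
= (\bfu,e^{-\ci\omega}\bfw)_{\bfL^2(\D)}
= (e^{\ci\omega}\bfu,\bfw)_{\bfL^2(\D)},
\end{align*}
where the last step again uses the same Hermitian-symmetry manipulation. This is exactly the asserted variational identity.

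It remains to justify that $\calJsigma{e^{\ci\omega}\bfu}$ is invertible, so that the solution of the rotated problem is unique and the above computation actually identifies $\calJsigma{e^{\ci\omega}\bfu}^{-1}\mathcal{I}[e^{\ci\omega}\bfu]$. Since $|e^{\ci\omega}|=1$ we have $e^{\ci\omega}\bfu\in\mathbb{S}$, so Lemma~\ref{lem:spectral-structure-J} applies and guarantees invertibility whenever $\sigma\notin\Lambda(e^{\ci\omega}\bfu)$. The remaining small point — and the only thing that needs its own short argument — is that $\Lambda(e^{\ci\omega}\bfu)=\Lambda(\bfu)$. This follows directly from Lemma~\ref{lem:J-phase-relation}: if $\calJ{\bfu}\bfv=\mu\mathcal{I}\bfv$ then, testing against arbitrary $\bfw$ and using \eqref{eq:dual-phase-J} together with the phase behaviour of $\mathcal{I}$ as above, one gets $\calJ{e^{\ci\omega}\bfu}[e^{\ci\omega}\bfv]=\mu\mathcal{I}[e^{\ci\omega}\bfv]$, so $\mu\in\Lambda(e^{\ci\omega}\bfu)$; applying this with $\omega$ replaced by $-\omega$ gives the reverse inclusion. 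Hence $\sigma\notin\Lambda(\bfu)$ implies $\sigma\notin\Lambda(e^{\ci\omega}\bfu)$, the rotated problem has a unique solution, and we conclude $\calJsigma{e^{\ci\omega}\bfu}^{-1}\mathcal{I}[e^{\ci\omega}\bfu] = e^{\ci\omega}\bfz = e^{\ci\omega}\calJsigma{\bfu}^{-1}\mathcal{I}\bfu$. I do not anticipate any real obstacle here; the proof is essentially a bookkeeping exercise in propagating the phase factor through $\calJ{\cdot}$, through $\mathcal{I}$, and through the shift, with Lemma~\ref{lem:J-phase-relation} doing the substantive work.
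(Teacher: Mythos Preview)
Your proof is correct and follows essentially the same route as the paper: define the solution $\bfz$ of the unrotated problem, use Lemma~\ref{lem:J-phase-relation} to move the phase through $\calJ{\cdot}$, move it through the shift via the Hermitian symmetry of the $\bfL^2$ inner product, and argue invertibility of $\calJsigma{e^{\ci\omega}\bfu}$ via the phase invariance $\Lambda(e^{\ci\omega}\bfu)=\Lambda(\bfu)$. The only cosmetic difference is that the paper tests against $e^{\ci\omega}\bfw$ and then invokes bijectivity of $\bfw\mapsto e^{\ci\omega}\bfw$, whereas you test against $\bfw$ directly; these are equivalent by the same change of variable.
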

\begin{proof}
Let $\bfy := \calJsigma{\bfu}^{-1}\,\mathcal{I}\bfu$, i.e., \,$\calJsigma{\bfu}\,\bfy \;=\; \mathcal{I}\bfu$\, in $\bfH^{-1}(\D)$. Equivalently, we have
\begin{align*}
  \langle \calJsigma{\bfu}\,\bfy,\,\bfw\rangle
  \;=\;
  \langle \mathcal{I}\bfu,\,\bfw\rangle
  \qquad \mbox{for all } \bfw\in\bfH^1_0(\D).
\end{align*}  
By Lemma~\ref{lem:J-phase-relation}, we have
\begin{align*}
  \langle \calJ{e^{\ci\omega}\bfu}[e^{\ci\omega}\bfy],\,e^{\ci\omega}\bfw\rangle
  \;=\;
  \langle \calJ{\bfu}\bfy,\,\bfw\rangle \qquad \mbox{for all } \bfw\in\bfH^1_0(\D).
\end{align*}
Together with the trivial identity $\big\langle \mathcal{I}\,[e^{\ci\omega}\bfy],\,e^{\ci\omega} \bfw\big\rangle\;=\;\big\langle \mathcal{I}\bfy,\,\bfw\big\rangle$ and $\calJsigma{\bfu} = \calJ{\bfu} - \sigma\,\mathcal{I}$ we obtain
\begin{align*}
  \langle \calJsigma{e^{\ci\omega}\bfu}[e^{\ci\omega}\bfy],\,e^{\ci\omega}\bfw\rangle
  \;=\;
  \langle \calJsigma{\bfu}\bfy,\,\bfw\rangle
  \;=\;
  \langle \mathcal{I}\bfu,\,\bfw\rangle
  \;=\;
  \langle \mathcal{I}\,[e^{\ci\omega}\bfu],\,e^{\ci\omega}\bfw\rangle.
\end{align*}
The map $\bfw\mapsto e^{\ci\omega}\bfw$ is a real-linear bijection of $\bfH^1_0(\D)$, hence the last identity implies in $\bfH^{-1}(\D)$ that
\begin{align*}
  \calJsigma{e^{\ci\omega}\bfu}\,[e^{\ci\omega}\bfy]
  \;=\;
  \mathcal{I}\,[e^{\ci\omega}\bfu].
\end{align*}
Note that by Lemma~\ref{lem:J-phase-relation}, the generalized eigenvalue set is invariant under phase shifts, i.e., it holds $\sigma\in\Lambda(\bfu)\,\Leftrightarrow\, \sigma\in\Lambda(e^{\ci\omega}\bfu)$. Hence, the assumption $\sigma\notin\Lambda(\bfu)$ implies that $\calJsigma{e^{\ci\omega}\bfu}$ is invertible, and the above equation admits a unique solution in $\bfH^1_0(\D)$.  We conclude that
\begin{align*}
  \calJsigma{e^{\ci\omega}\bfu}^{-1}\,\mathcal{I}\,[e^{\ci\omega}\bfu]
  \;=\;
  e^{\ci\omega}\bfy
  \;=\;
  e^{\ci\omega}\,\calJsigma{\bfu}^{-1}\,\mathcal{I}\bfu,
\end{align*}
which proves the claim.
\end{proof}

\section{Proof of local convergence}
\label{sec:proof-loc-convergence}

Our goal is to prove locally linear convergence of the $J$-method for a fixed spectral shift $\sigma \in \R$. For this, we will exploit the well-known Ostrowski theorem (cf. \cite{AHP20,Shi81}) which we recall as follows in our setting:
\begin{lemma}[Ostrowski's theorem]\label{lemma:ostrowski}
	Let \(\Phi: \bfH_0^1(\D)\to \bfH_0^1(\D)\) define a map that is real Fr\'echet differentiable in a point \(\bfu\in \bfH^1_0(\D)\). Assume furthermore that the Fr\'echet derivative \(\Phi'(\bfu):\bfH^1_0(\D)\to \bfH^1_0(\D)\) is a bounded real-linear operator with spectral radius
	\begin{align*}\rho := \rho(\Phi'(\bfu))<1.\end{align*}
	Then there exists an open neigbourhood \(U\) of \(\bfu\) such that the fixed-point iteration given by
	\begin{align*}\bfu^{n+1} := \Phi(\bfu^n)\end{align*}
	 converges strongly to \(\bfu\) in \(\bfH^1(\D)\) for all initial starting values \(\bfu^0\in U\), i.e., \(\|\bfu^n - \bfu\|_{\bfH^1(\D)} \to 0\) for \(n\to \infty\). Additionally, there exists a neighbourhood \(U_\eps\) of \(\bfu\) for any \(\eps>0\) such that
	 	\begin{align*}\|\bfu^n - \bfu\|_{\bfH^1(\D)} \leq C_\eps|\rho + \eps|^n\|\bfu^0 - \bfu\|_{\bfH^1(\D)}\, \quad \, \text{ for all }\, \bfu^0\in U_\eps \, \text{ and }\, n\geq 1.\end{align*}
	 	The spectral radius $\rho$ defines the asymptotic linear convergence rate for the fixed-point iteration.
\end{lemma}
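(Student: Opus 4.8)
The plan is to reduce this infinite-dimensional statement to the classical Ostrowski contraction argument by replacing the $\bfH^1$-norm with an adapted, equivalent norm in which $\Phi'(\bfu)$ acts as a strict contraction, and then to run an elementary invariant-ball induction. Throughout I would write $\bfu$ for the fixed point (so $\Phi(\bfu)=\bfu$), set $T:=\Phi'(\bfu)\in\mathcal L(\bfH^1_0(\D))$, and $\rho:=\rho(T)<1$. Given $\eps>0$ I may assume $\rho+\eps<1$ (otherwise the bound is vacuous), and I would invoke the spectral radius formula $\rho=\lim_{k\to\infty}\|T^k\|^{1/k}$ (valid for bounded real-linear operators) to produce, for the choices $\eta:=\rho+\tfrac{\eps}{2}$ and some fixed $\theta\in(\rho,\eta)$, a constant $C_T\ge1$ with $\|T^k\|_{\bfH^1\to\bfH^1}\le C_T\,\theta^k$ for all $k\ge0$.

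Next I would introduce the adapted norm $\|\bfv\|_*:=\sum_{k\ge0}\eta^{-k}\|T^k\bfv\|_{\bfH^1(\D)}$. The routine checks are that the series converges with $\|\bfv\|_{\bfH^1(\D)}\le\|\bfv\|_*\le C_*\|\bfv\|_{\bfH^1(\D)}$ for $C_*:=C_T/(1-\theta/\eta)$, so that $\|\cdot\|_*$ is equivalent to $\|\cdot\|_{\bfH^1(\D)}$, and that an index shift gives $\|T\bfv\|_*=\eta(\|\bfv\|_*-\|\bfv\|_{\bfH^1(\D)})\le\eta\|\bfv\|_*$, i.e. $T$ has operator norm $\le\eta$ in $\|\cdot\|_*$. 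Then, using Fréchet differentiability of $\Phi$ at $\bfu$, I would write $\Phi(\bfv)-\bfu=T(\bfv-\bfu)+\mathbf r(\bfv-\bfu)$ with $\|\mathbf r(\bfh)\|_{\bfH^1(\D)}=o(\|\bfh\|_{\bfH^1(\D)})$, hence also $\|\mathbf r(\bfh)\|_*=o(\|\bfh\|_*)$, and fix $\delta>0$ so that $\|\mathbf r(\bfh)\|_*\le\tfrac{\eps}{2}\|\bfh\|_*$ whenever $\|\bfh\|_*\le\delta$. Taking $U_\eps:=\{\bfv:\|\bfv-\bfu\|_*<\delta\}$, an induction shows that for $\bfu^0\in U_\eps$ the iterate stays in $U_\eps$ and
\[
\|\bfu^{n+1}-\bfu\|_*\le\big(\eta+\tfrac{\eps}{2}\big)\|\bfu^n-\bfu\|_*=(\rho+\eps)\,\|\bfu^n-\bfu\|_*,
\]
so $\|\bfu^n-\bfu\|_*\le(\rho+\eps)^n\|\bfu^0-\bfu\|_*$; converting back with the norm equivalence yields $\|\bfu^n-\bfu\|_{\bfH^1(\D)}\le C_*(\rho+\eps)^n\|\bfu^0-\bfu\|_{\bfH^1(\D)}$, i.e. $C_\eps:=C_*$ works after shrinking $U_\eps$ to an $\bfH^1$-ball inside it. Strong convergence without a rate follows by fixing any admissible $\eps$, and the identification of $\rho$ as the sharp asymptotic rate follows from the standard lower-bound argument, taking $\bfu^0-\bfu$ close to an approximate eigenvector of $T$ for a spectral value of modulus $\rho$.

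The step I expect to be the main obstacle is the adapted-norm construction: since $T$ need not be diagonalizable in infinite dimensions, the implication ``$\rho(T)<1\Rightarrow T$ is a strict contraction in some norm'' has to go through Gelfand's formula and the geometric-series norm above, and one must carefully verify both the equivalence of $\|\cdot\|_*$ with $\|\cdot\|_{\bfH^1(\D)}$ and that the linearization remainder $\mathbf r$ retains its $o(\cdot)$ estimate in $\|\cdot\|_*$. Once that is in place, everything else is the textbook contraction-mapping induction together with a change of norms.
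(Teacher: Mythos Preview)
The paper does not prove this lemma at all: it is stated as a well-known result with the citation ``(cf.\ \cite{AHP20,Shi81})'' and then used as a black box in the analysis of the auxiliary iteration. So there is no proof in the paper to compare against.

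Your argument is a correct and standard proof of the Banach-space Ostrowski theorem. The key step---replacing the original norm by the adapted norm $\|\bfv\|_*=\sum_{k\ge0}\eta^{-k}\|T^k\bfv\|_{\bfH^1}$ so that $T$ becomes a strict contraction with constant $\eta=\rho+\tfrac{\eps}{2}$---is exactly the right way to proceed when $T$ is not assumed diagonalizable; the equivalence of norms, the index-shift identity $\|T\bfv\|_*=\eta(\|\bfv\|_*-\|\bfv\|_{\bfH^1})$, and the transfer of the $o(\cdot)$ remainder to $\|\cdot\|_*$ are all correctly handled. The only implicit hypothesis you add (and which the paper also tacitly assumes) is $\Phi(\bfu)=\bfu$, without which the conclusion is meaningless. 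Your remark about the lower bound for the sharp rate is more than the lemma statement actually claims, but it does no harm.
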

To exploit Ostrowski's theorem we first express the $J$-method as a fixed-point iteration. However, as we will see, the canonical map has the property $\rho(\Phi'(\bfu))=1$ due to the invariance under global phase shifts. For that reason, the fixed-point maps need to be modified to make the theorem applicable. 

We start with the canonical formulation in the next subsection. 

\subsection{Canonical fixed-point formulation}
Let us fix a shift $\sigma\in \mathbb{R}$ and consider the operator $\calJ{\bfu}-\sigma \mathcal{I} =:\calJsigma{\bfu}:\bfH^1_0(\D)\to \bfH^{-1}(\D)$ for some  $(\bfu,\lambda)\in \mathbb{\S} \times \R$ of the SO-GPE $\calA{\bfu} \bfu = \lambda \mathcal{I} \bfu$. According to Lemma \ref{lem:spectral-structure-J}, $\calJsigma{\bfu}$ has a compact inverse $\calJsigma{\bfu}^{-1}\mathcal I:\bfL^2(\D)\to\bfH_0^1(\D)$ for all $\sigma \not\in \Lambda(\bfu)$. Hence, we can
define the compact operator $\Psi_{\sigma} : \bfL^2(\D) \to \bfH^1_0(\D)$ by
\begin{align}
\label{eq:psi}
\Psi_{\sigma}(\bfv) = (\lambda - \sigma) \calJsigma{\bfv}^{-1}\mathcal{I}\bfv
\end{align}
and obtain the corresponding fixed-point function $\Phi_{\sigma} : \bfH^1_0(\D)\to \bfH^1_0(\D)$ as
\begin{equation}
	\label{eq:phi}
	\Phi_{\sigma}(\bfv) := \frac{\hspace{-24pt}\Psi_{\sigma}(\bfv)}{\|\Psi_{\sigma}(\bfv)\|_{\bfL^2(\D)}} 
	= \operatorname{sgn}(\lambda - \sigma)\frac{\hspace{-22pt}
		\calJsigma{\bfv}^{-1}\mathcal{I}\bfv}{\|
		\calJsigma{\bfv}^{-1} \mathcal{I} \bfv
		\|_{\bfL^2(\D)}}. 
\end{equation}
With this, the iterations of the $J$-method \eqref{J-iteration-shift} (for some initial value $\bfu^0\in\S$ and $n\geq0$) are given by
\begin{equation}
	\label{eq:iteration}
	\bfu^{n+1} = \Phi_{\sigma}(\bfu^n) = \operatorname{sgn}(\lambda - \sigma) \frac{\hspace{-24pt}
		\calJsigma{\bfu^n}^{-1}\mathcal{I}\bfu^n
	}{
	\|
	\calJsigma{\bfu^n}^{-1}\mathcal{I}\bfu^n
	\|_{\bfL^2(\D)}
	}.
\end{equation}
Note that for any eigenpair $(\bfu,\lambda)\in \mathbb{\S} \times \R$ with $\calJ{\bfu} \bfu = \calA{\bfu} \bfu = \lambda \mathcal{I} \bfu$, we have
\begin{align*}
\Psi_{\sigma}(\bfu) &= 	(\lambda - \sigma)\calJsigma{\bfu}^{-1}\mathcal{I}\bfu=\calJsigma{\bfu}^{-1}\lambda\mathcal{I}\bfu - \sigma \calJsigma{\bfu}^{-1}\mathcal{I}\bfu
	\\
	&= \calJsigma{\bfu}^{-1}\calJ{\bfu}\bfu - \sigma \calJsigma{\bfu}^{-1}\mathcal{I}\bfu
	=\calJsigma{\bfu}^{-1}\calJsigma{\bfu}\bfu=\bfu.
\end{align*}
Hence, $\bfu$ is indeed a fixed point of $\Psi_{\sigma}$, as well as of $\Phi_{\sigma}$. 

We also recall that, practically speaking, the value of $\lambda$ is not known at the beginning of the iteration and hence, a statement on the sign of $\lambda - \sigma$ cannot be made in general. Mathematically speaking, the sign is important to ensure that a ground state $\bfu\in\S$ is a fixed point of $\Phi_{\sigma}$, as this is a necessary condition for making use of Ostrowski's theorem. However, computationally speaking, the sign does not pose a problem to finding ground states, as ignoring it will at most result in sign oscillations of $\bfu^{n}$ which will neither influence the convergence to the limiting density $|\bfu|^2$ nor to the target eigenvalue $\lambda$.

In order to understand the spectrum of $\Phi_{\sigma}'(\bfu)$, which is crucial for the local convergence of the fixed-point iteration, the next lemma provides expressions for $\Phi_{\sigma}'(\bfu)$ and $\Psi_{\sigma}'(\bfu)$, together with the critical observation that $\rho(\Phi_{\sigma}'(\bfu)) \not< 1$.
\begin{lemma}\label{lem:derivatives:psi-phi-prime}
Assume \ref{A1}-\ref{A3} and let $(\bfu,\lambda) \in \S \times \R$ denote an eigenpair to $\calA{\bfu}\bfu = \lambda\, \mathcal{I} \bfu$ and $\sigma \not\in \Lambda(\bfu)$. Then, the real Fr\'echet derivatives of the maps $\Psi_{\sigma}$ and $\Phi_{\sigma}$ (see \eqref{eq:psi} and \eqref{eq:phi}) at $\bfu$ in direction $\bfv \in \bfH^1_0(\D)$ are given by 
\begin{eqnarray}
 \label{eq:psi_prime}      \Psi_{\sigma}'(\bfu)[\bfv] &=& (\lambda - \sigma)\calJsigma{\bfu}^{-1}\mathcal{I}\bfv, \\
 \label{eq:phi_prime}	     \Phi_{\sigma}'(\bfu)[\bfv] &=& (\lambda - \sigma)\big(\calJsigma{\bfu}^{-1}\mathcal{I}\bfv - (\calJsigma{\bfu}^{-1}\mathcal{I}\bfv,\bfu)_{\bfL^2(\D)}\bfu\big).
\end{eqnarray}
Furthermore, it holds $\Phi_{\sigma}'(\bfu) [\ci \bfu ]=\ci \bfu$, hence $\rho(\Phi_{\sigma}'(\bfu)) \ge 1$.
\end{lemma}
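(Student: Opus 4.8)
The statement to be proved is Lemma~\ref{lem:derivatives:psi-phi-prime}: formulas \eqref{eq:psi_prime}, \eqref{eq:phi_prime} for the Fréchet derivatives, plus the identity $\Phi_\sigma'(\bfu)[\ci\bfu]=\ci\bfu$ (hence $\rho(\Phi_\sigma'(\bfu))\geq 1$). I would structure the proof in three steps: (i) differentiate $\Psi_\sigma$, (ii) differentiate the normalization to get $\Phi_\sigma$, and (iii) evaluate at the gauge direction.

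Let me think through each step.

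**Step (i): differentiating $\Psi_\sigma$.** We have $\Psi_\sigma(\bfv)=(\lambda-\sigma)\calJsigma{\bfv}^{-1}\mathcal I\bfv$. The key structural fact is the defining identity $\calJ{\bfv}\bfv = \calA{\bfv}\bfv$ from \eqref{identity-A-J}, which by the very definition $\calJ{\bfv}=\frac{d}{d\bfv}[\calA{\bfv}\bfv]$ means $\calJ{\bfv}$ is the Fréchet derivative of the map $\bfv\mapsto\calA{\bfv}\bfv$. Write $\bfy(\bfv):=\calJsigma{\bfv}^{-1}\mathcal I\bfv$, i.e. $\calJsigma{\bfv}\bfy(\bfv)=\mathcal I\bfv$, equivalently $\calJ{\bfv}\bfy(\bfv)=\sigma\mathcal I\bfv+\mathcal I\bfv\cdot(\text{...})$ — actually cleaner: $\calA{\bfv}\bfv - \sigma\mathcal I\bfv$... no. The clean route: differentiate $\calJsigma{\bfv}\bfy(\bfv)=\mathcal I\bfv$ implicitly in direction $\bfw$. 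The subtlety is that $\bfv\mapsto\calJ{\bfv}$ itself depends on $\bfv$, so the product rule gives a term $(D_\bfv\calJsigma{\bfv})[\bfw]\,\bfy(\bfv) + \calJsigma{\bfv}\,\bfy'(\bfv)[\bfw]=\mathcal I\bfw$.

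**The main obstacle.** The crux is showing that, \emph{when evaluated at the fixed point} $\bfu$ (where $\bfy(\bfu)=\bfu$), the troublesome term $(D_\bfv\calJsigma{\bfv})[\bfw]\big|_{\bfv=\bfu}\,\bfu$ vanishes. Here is where the real-scaling invariance of $\calA{\bfu}$ — hence the special structure of $\calJ{}$ — is essential. Indeed $(D_\bfv\calJ{\bfv})[\bfw]\,\bfu$ evaluated at $\bfv=\bfu$ is the second Fréchet derivative of $\bfv\mapsto\calA{\bfv}\bfv$ applied to $(\bfw,\bfu)$; by homogeneity (degree-one-ish scaling of $\calA{\bfv}\bfv$ in the nonlinear part, constant in the linear part) one expects $\calJ{\bfv}'[\bfw]\bfv$ to relate back to $\calJ{\bfv}\bfw - \calA{\bfv}\bfw$ or similar, and at the eigenfunction this collapses. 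I would make this precise by writing $\calA{\bfv}\bfv = L\bfv + N(\bfv)$ with $L$ the (scaling-independent) linear part and $N$ the cubic nonlinearity divided by $\|\bfv\|^2$; then $\calJ{\bfv}\bfw = L\bfw + N'(\bfv)[\bfw]$ and $(D_\bfv\calJ{\bfv})[\bfw]\bfv = N''(\bfv)[\bfw,\bfv]$, and Euler's identity for the homogeneous map $N$ together with the explicit form should kill the relevant contraction against $\bfu$. Concretely: since $\calA{\bfv}\bfv$ is (real-)homogeneous of degree $1$ in $\bfv$ (both $L\bfv$ and $N(\bfv)$ are, by the artificial $\|\bfv\|^{-2}$ normalization), we get $N''(\bfv)[\bfv,\bfv]=0$ and more usefully $\calJ{\bfv}'[\bfw]\bfv + \calJ{\bfv}\bfw = \calJ{\bfv}\bfw + (\text{homogeneity-degree-}0\text{ correction})$... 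I'd verify this by the direct computation differentiating \eqref{eq:J} once more, or — cleaner — by using $\frac{d}{dt}\big[\calJ{\bfv+t\bfv}(\bfv+t\bfv)\big]\big|_{t=0}=\calA{\bfv}\bfv$ (from \eqref{identity-A-J} applied at each scale) and expanding. This yields $\Psi_\sigma'(\bfu)[\bfv]=(\lambda-\sigma)\calJsigma{\bfu}^{-1}\mathcal I\bfv$, as claimed.

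**Step (ii): the normalization.** With $\Phi_\sigma=\Psi_\sigma/\|\Psi_\sigma\|_{\bfL^2}$, use the standard quotient rule for the map $\bfp\mapsto\bfp/\|\bfp\|_{\bfL^2}$, whose derivative at a point $\bfp$ with $\|\bfp\|_{\bfL^2}=1$ is $\bfq\mapsto\bfq-(\bfq,\bfp)_{\bfL^2}\bfp$. Since $\Psi_\sigma(\bfu)=\bfu$ and $\|\bfu\|_{\bfL^2}=1$ (as $\bfu\in\S$), chain rule with Step (i) gives exactly \eqref{eq:phi_prime}. One must note the $\operatorname{sgn}(\lambda-\sigma)$ factor is locally constant near $\bfu$, so it contributes nothing to the derivative beyond the overall sign already absorbed in $\Psi_\sigma$.

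**Step (iii): the gauge direction.** Plug $\bfv=\ci\bfu$ into \eqref{eq:phi_prime}. By Lemma~\ref{lem:J-mult-2-nonsym} (or directly from Lemma~\ref{lem:J-gauge} / the computation $\calJ{\bfu}[\ci\bfu]=\lambda\mathcal I[\ci\bfu]$), $\ci\bfu$ is an eigenfunction of $\calJ{\bfu}$ with eigenvalue $\lambda$, hence $\calJsigma{\bfu}^{-1}\mathcal I[\ci\bfu]=(\lambda-\sigma)^{-1}\ci\bfu$. Therefore $\Phi_\sigma'(\bfu)[\ci\bfu]=(\lambda-\sigma)\big((\lambda-\sigma)^{-1}\ci\bfu - ((\lambda-\sigma)^{-1}\ci\bfu,\bfu)_{\bfL^2}\bfu\big)=\ci\bfu - (\ci\bfu,\bfu)_{\bfL^2}\bfu=\ci\bfu$, using $(\ci\bfu,\bfu)_{\bfL^2(\D)}=\Re\int_\D\ci|\bfu|^2\,\dx=0$. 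So $\ci\bfu$ is an eigenvector of $\Phi_\sigma'(\bfu)$ with eigenvalue $1$, whence $\rho(\Phi_\sigma'(\bfu))\geq 1$, and Ostrowski's theorem cannot be applied directly — which is precisely the motivation for the phase-frozen auxiliary iteration in the next subsection.

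**Summary of difficulty.** Steps (ii) and (iii) are routine; the entire weight of the lemma sits in Step (i), specifically in proving that the implicit-differentiation term $(D_\bfv\calJsigma{\bfv})[\bfw]\,\bfu$ vanishes at the eigenfunction. I'd prove this by exploiting that $\bfv\mapsto\calA{\bfv}\bfv$ is real-homogeneous of degree one (thanks to the artificial scaling normalization in \eqref{def-Lu}), so that its second derivative $(D_\bfv\calJ{\bfv})[\bfw]$, contracted against the base point $\bfv=\bfu$ in its "directional" slot, is forced by Euler's relation to cancel; the explicit formula \eqref{eq:J} makes this transparent term-by-term if a coordinate computation is preferred.
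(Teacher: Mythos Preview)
Your proposal is correct and follows essentially the same route as the paper. In particular, the paper handles your Step~(i) cancellation by the explicit limit computation
\[
(\mathcal J_\sigma'(\bfu)[\bfv])\bfu
=\lim_{\varepsilon\to0}\frac{\calJ{\bfu+\varepsilon\bfv}(\bfu+\varepsilon\bfv)-\calJ{\bfu}\bfu}{\varepsilon}-\calJ{\bfu}\bfv
\overset{\eqref{identity-A-J},\eqref{def-J-op}}{=}\calJ{\bfu}\bfv-\calJ{\bfu}\bfv=0,
\]
which is precisely your degree-one homogeneity/Euler argument spelled out as a limit; Steps~(ii) and~(iii) and the appeal to Lemma~\ref{lem:J-gauge} match the paper exactly.
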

\begin{proof}
The proof essentially follows the same arguments as in \cite{AHP20}.

Since $\Psi_{\sigma}(\bfu)= \Phi_{\sigma}(\bfu) =\bfu$ and $ \| \bfu \|_{\bfL^2(\D)}= 1$, we can compute the derivative of \(\Phi_{\sigma}\) in direction \(\bfv\in\bfH_0^1(\D)\) with the product rule to obtain
\begin{eqnarray}
\label{eq:phi-prime-1}
\nonumber
	\Phi_{\sigma}'(\bfu)[\bfv] &=& \frac{\hspace{-17pt}\Psi_{\sigma}'(\bfu)[\bfv]}{\|\Psi_{\sigma}(\bfu)\|_{\bfL^2(\D)}}  -  \frac{\hspace{-25pt}\Psi_{\sigma}(\bfu)}{\|\Psi_{\sigma}(\bfu)\|_{\bfL^2(\D)}^2}(\Psi_{\sigma}'(\bfu)[\bfv], \Phi_{\sigma}(\bfu))_{\bfL^2(\D)} \\[0.5em]
	&=& \Psi_{\sigma}'(\bfu)[\bfv]  -  (\Psi_{\sigma}'(\bfu)[\bfv], \bfu )_{\bfL^2(\D)} \bfu.
\end{eqnarray}
To determine \(\Psi_{\sigma}'(\bfu)[\bfv]\), recall that \(\mathcal{I}\bfv = (\bfv, \cdot)_{\bfL^2(\D)}\) denotes the identity operator. With this we have 
\begin{eqnarray*}
\mathcal{I}\bfv &=&\frac{\diff}{\diff \varepsilon}\mathcal{I}(\bfu+\varepsilon \bfv)\vert_{\varepsilon=0} 
\,\overset{\eqref{eq:psi}}{=}\, (\lambda - \sigma)^{-1}\frac{\diff}{\diff \varepsilon} \calJsigma{\bfu + \varepsilon \bfv}\Psi_{\sigma}(\bfu + \varepsilon \bfv)\vert_{\varepsilon=0} \\
&=& (\lambda - \sigma)^{-1}\left(\, ( \mathcal{J}_{\sigma}'(\bfu)[\bfv])\Psi_{\sigma}(\bfu)  \,+\, \calJsigma{\bfu}\,(\Psi_{\sigma}'(\bfu)[\bfv])\,\right).
\end{eqnarray*}
A simple rearrangement shows
\begin{eqnarray*}
\Psi_{\sigma}'(\bfu)[\bfv] \,\,=\,\, \calJsigma{\bfu}^{-1}\big((\lambda - \sigma)\mathcal{I}\bfv - ( \mathcal{J}_{\sigma}'(\bfu)[\bfv]) \Psi_{\sigma}(\bfu)\big)  \,\,=\,\, \calJsigma{\bfu}^{-1}\big((\lambda - \sigma)\mathcal{I}\bfv - ( \mathcal{J}_{\sigma}'(\bfu)[\bfv])\bfu\big).
\end{eqnarray*}
It remains to determine $( \mathcal{J}_{\sigma}'(\bfu)[\bfv])\bfu$, for which we obtain
\begin{eqnarray*}
	( \mathcal{J}_{\sigma}'(\bfu)[\bfv])\bfu &=& \lim_{\varepsilon\to 0}\frac{ \calJsigma{\bfu + \varepsilon \bfv}\bfu - \calJsigma{\bfu}\bfu}{\varepsilon} 
	\,\,\,=\,\,\, \lim_{\varepsilon\to 0}\frac{ \calJ{\bfu + \varepsilon \bfv}\bfu - \calJ{\bfu}\bfu}{\varepsilon} \\
	&=& \lim_{\varepsilon \to 0} \frac{
		\calJ{\bfu + \varepsilon \bfv}(\bfu + \varepsilon \bfv) - \calJ{\bfu}\bfu - \calJ{\bfu + \varepsilon \bfv} \varepsilon \bfv
	}{\varepsilon}\\
	&\overset{\eqref{identity-A-J}}{=}& \lim_{\varepsilon \to 0} \frac{
		\calA{\bfu + \varepsilon \bfv}(\bfu + \varepsilon \bfv) - \calA{\bfu}\bfu  
	}{\varepsilon} \,\, - \,\, \calJ{\bfu} \bfv  
	\,\,\, \overset{\eqref{def-J-op}}{=}\,\,\, 0.
\end{eqnarray*}
Hence, the formula reduces to \(\Psi_{\sigma}'(\bfu)[\bfv] = (\lambda - \sigma)\calJsigma{\bfu}^{-1}\mathcal{I}\bfv\), proving \eqref{eq:psi_prime}. Together with \eqref{eq:phi-prime-1} we also obtain \eqref{eq:phi_prime}.

It remains to show that \(\bfv=\ci \bfu\) is an eigenfunction with eigenvalue \(1\) to \(\Phi_{\sigma}'(\bfu)\). Here we can directly apply Lemma \ref{lem:J-gauge} which yields for arbitrary  $\bfw\in\bfH^1_0(\D)$:
\begin{align*}
\langle \calJ{\bfu}[\ci\bfu],\bfw\rangle
\,\,&=\,\, \langle \calJ{\bfu}\bfu,-\ci\,\bfw\rangle 
\,\,=\,\,  \langle \calA{\bfu}\bfu,-\ci\,\bfw\rangle \\
\,\,&=\,\,  \lambda \, ( \bfu , -\ci\,\bfw)_{\bfL^2(\D)}
\,\,=\,\,  \lambda \, ( \ci  \bfu , \bfw)_{\bfL^2(\D)}
\,\,=\,\, \lambda \, \langle \mathcal{I}[\ci  \bfu] , \bfw \rangle.
\end{align*}
Consequently, we have from \eqref{eq:phi_prime}
\begin{eqnarray}
\nonumber \Phi_{\sigma}'(\bfu) [\ci \bfu]&=& (\lambda - \sigma)\big(\calJsigma{\bfu}^{-1}\mathcal{I}[\ci \bfu] - (\calJsigma{\bfu}^{-1}\mathcal{I}[\ci \bfu],\bfu)_{\bfL^2(\D)}\bfu\big) \\
\nonumber &=& (\lambda - \sigma)\big( \tfrac{1}{\lambda - \sigma} \ci \bfu - \tfrac{1}{\lambda - \sigma} (\ci \bfu,\bfu)_{\bfL^2(\D)}\bfu\big) \\
 &=& \ci \bfu. \label{phiprimasigma-ev-iu}
\end{eqnarray}
We conclude that $\ci \bfu$ is an eigenfunction of $ \Phi_{\sigma}'(\bfu) $ with eigenvalue $1$.
\end{proof}

As we just saw, it holds \(\rho(\Phi_{\sigma}'(\bfu))\not < 1\) and Ostrowski's theorem is hence not directly applicable. This issue arises due to the missing uniqueness of the ground state. As \(\ci \bfu \in T_{\bfu}\S\), where \(T_{\bfu}\S\) denotes the tangent space in \(\bfu\), taking a step in direction \(\ci \bfu\) just means circling around the gauge orbit without affecting the energy at all. However, this issue can be solved by fixing the phase, i.e., blocking the problematic direction \(\ci \bfu\) of the iteration. We will now define an auxiliary iteration \(\tilde{\Phi}_{\sigma}\) with a fixed phase that converges to the same solution as the iteration defined by \(\Phi_{\sigma}\).

\subsection{Auxiliary fixed-point formulation}

Before defining the aforementioned auxiliary iteration \(\tilde{\Phi}_{\sigma}\), let us highlight which properties should be fulfilled. We still require a ground state \(\bfu\in \S\) to be a fixed point, i.e., \(\tilde{\Phi}_{\sigma}(\bfu)=\bfu\). Of course, the auxiliary iteration should be related to the original iteration given by \eqref{eq:iteration}. In our case, the auxiliary iteration given by \(\tilde \bfu^{n+1}=\tilde{\Phi}_{\sigma}(\tilde \bfu^n)\) should produce an output that is merely a complex phase shift of the original iteration as long as both iterations are initialized with the same starting values. We would then like to establish a convergence rate for this auxiliary iteration, using Ostrowski's theorem. By the relationship between the two iterations, we will then be able to determine the convergence rate of \eqref{eq:iteration}. In order to use Ostrowski's theorem, we evidently require \(\rho(\tilde{\Phi}_{\sigma}'(\bfu))<1\), implying that \(\ci \bfu\) is no longer an eigenfunction with eigenvalue $1$ to $\tilde{\Phi}_{\sigma}'(\bfu)$. The following phase-lock strategy was first suggested in \cite{PHMY242} for gradient descent methods in a single-component setting.

\begin{definition}[Auxiliary iteration]
	For fixed \(\bfu \in \S\) and an initial value \(\tilde \bfu^0\in \S \),
	 the auxiliary iterates \(\tilde \bfu^n\in \bfH^1_0(\D)\) (for \(n\geq 0\)) are given by
	\begin{equation}\label{eq:aux-iteration}
		\tilde \bfu^{n+1} =\tilde{\Phi}_{\sigma}(\tilde \bfu^n),
	\end{equation}
	where \(\tilde{\Phi}_{\sigma}: \bfH^1_0(\D) \to \bfH^1_0(\D)\) is defined by
	\begin{align*}
	\tilde{\Phi}_{\sigma}(\bfv) \,:=\, \Theta_{\sigma}(\bfv) \, {\Phi}_{\sigma}(\bfv)
	\end{align*}
	 with the $\bfu$-dependent phase map $\Theta_{\sigma} : \bfH^1_0(\D) \mapsto S^1$ given as
	 \begin{align*}
	 \Theta_{\sigma}(\bfv):= \begin{cases}
	\frac{\int_{\D} \bfu \cdot \overline{\Psi_{\sigma}(\bfv)} d x}
	{\left| \int_{\D} \bfu \cdot \overline{\Psi_{\sigma}(\bfv)} dx\right|} \qquad &\mbox{for } \int_{\D} \bfu \cdot \overline{\Psi_{\sigma}(\bfv)} \dx \not= 0, \\[0.5em]
	\qquad 1 \qquad &\mbox{for } \int_{\D} \bfu \cdot \overline{\Psi_{\sigma}(\bfv)} \dx = 0.
	\end{cases}
	\end{align*}
\end{definition}
Multiplying \(\Phi_{\sigma}(\bfv)\) by \(\Theta_{\sigma}(\bfv)\) aligns the phase of \( \tilde{\Phi}_{\sigma}(\bfv)\) with the phase of \(\bfu\). Let us observe a few properties of \(\tilde{\Phi}_{\sigma}'\): Recall that any eigenfunction $\bfu \in \mathbb{S}$ to the eigenvalue $\lambda$ is a fixed point of \(\Psi_{\sigma}\). Since \(\bfu\in \S\), it follows \(\Theta_{\sigma}(\bfu)=1\) and thus \(\tilde{\Phi}_{\sigma}(\bfu)=\bfu\). The key observation is that for arbitrary \(\omega\in[-\pi,\pi)\setminus\{0\}\), a phase shift of \(\bfu\) is still a fixed point of the original iteration, i.e., \(\Phi_{\sigma}( e^{\ci \omega}\bfu)=e^{\ci \omega} \bfu\), but no longer a fixed point of the auxiliary iteration. In fact, we have \(\Theta_{\sigma}(e^{\ci \omega} \bfu)=e^{-\ci \omega} \) and therefore 
\begin{align*}
\tilde{\Phi}_{\sigma}(e^{\ci \omega} \bfu) 
= \Theta_{\sigma}(e^{\ci \omega} \bfu) \, {\Phi}_{\sigma}(e^{\ci \omega} \bfu)
= e^{-\ci \omega}  e^{\ci \omega} \bfu
= \bfu\neq e^{\ci\omega}\bfu.
\end{align*} 

To clarify how a phase shift of the ground state affects the inverse of the shifted \(J\)-operator and to verify that the auxiliary iteration indeed just results in a phase shift of the original iteration, recall the statement of Lemma \ref{lem:J-phase-relation-inverse} which ensures for arbitrary $\bfv \in \mathbb{S}$ and $\sigma \not\in \Lambda(\bfv)$ that
\begin{align*}
  \calJsigma{e^{\ci\omega}\bfv}^{-1}\,\mathcal{I}\,[e^{\ci\omega}\bfv]
  \;=\;
  e^{\ci\omega}\,\calJsigma{\bfv}^{-1}\,\mathcal{I}\bfv
  \qquad \mbox{for all } \,\,\omega\in\R.
\end{align*}
In particular, this provides us with the equation
\begin{align*}
\Psi_{\sigma}( e^{\ci\omega}\bfv)=e^{\ci\omega}\Psi_{\sigma}(\bfv)
\end{align*}
and consequently also
\begin{align}
\label{phase-identity-Phi-sigma}
\Phi_{\sigma}( e^{\ci\omega}\bfv) = \frac{\hspace{-24pt}\Psi_{\sigma}( e^{\ci\omega}\bfv)}{\| \Psi_{\sigma}( e^{\ci\omega}\bfv) \|_{\bfL^2(\D)}}
= \frac{e^{\ci\omega}\Psi_{\sigma}(\bfv)}{\| \Psi_{\sigma}(\bfv)\|_{\bfL^2(\D)}}
= e^{\ci\omega} \Phi_{\sigma}( \bfv).
\end{align}
With this in mind, we are now ready to relate the auxiliary iteration to the original iteration.

\begin{lemma}\label{lem:aux-it}
Assume \ref{A1}--\ref{A3}, let $(\bfu,\lambda)\in\mathbb{S}\times\R$ be an SO-GPE eigenpair and $\sigma\not\in\Lambda(\bfu)$.
Consider the iterates of the $J$-method
\begin{align*}
\bfu^{n+1}=\Phi_{\sigma}(\bfu^n), \qquad n\ge 0,
\end{align*}
and the auxiliary iteration
\begin{align*}
\tilde \bfu^{\,n+1}
  =\tilde{\Phi}_{\sigma}(\tilde \bfu^{\,n})
  :=\Theta_{\sigma}(\tilde \bfu^{\,n})\Phi_{\sigma}(\tilde \bfu^{\,n}),
  \qquad n\ge 0,
\end{align*}
with the same initial value $\bfu^0=\tilde \bfu^{\,0}\in\S$.
If the iterations are well-defined for all $n\in \mathbb{N}$, then there exists a sequence $(\omega_n)_{n \in \N_0}\subset\R$ such that
\begin{align*}
  \tilde \bfu^{\,n} = e^{\ci\omega_n}\,\bfu^n
  \qquad\text{for all }n\ge 0.
\end{align*}
In particular, $\tilde \bfu^{\,n}$ and $\bfu^n$
only differ by a complex phase factor for every $n\ge 0$ and hence
\begin{align*}
|\tilde \bfu^{\,n}(x)| = |\bfu^n(x)| 
\quad\text{for a.e.\ }x\in\D.
\end{align*}
\end{lemma}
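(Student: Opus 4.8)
I will prove the relation $\tilde\bfu^{\,n}=e^{\ci\omega_n}\bfu^n$ by induction on $n$, the engine being the phase identity \eqref{phase-identity-Phi-sigma} for $\Phi_\sigma$ together with the equivariance $\Psi_\sigma(e^{\ci\omega}\bfv)=e^{\ci\omega}\Psi_\sigma(\bfv)$ recorded just above the lemma, and the resulting transformation law for the phase map $\Theta_\sigma$. The content of the lemma is precisely that the phase-lock multiplier $\Theta_\sigma$ absorbs exactly the global phase that the unmodified iteration $\Phi_\sigma$ accumulates, so no deep obstacle is expected; the only point requiring care is the case split in the definition of $\Theta_\sigma$.

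\textbf{Set-up and base case.} First I would record that all iterates stay on $\mathbb{S}$. Indeed $\bfu^{n+1}=\Phi_\sigma(\bfu^n)$ is normalised by construction, and $\tilde\bfu^{\,n+1}=\Theta_\sigma(\tilde\bfu^{\,n})\,\Phi_\sigma(\tilde\bfu^{\,n})$ is the element $\Phi_\sigma(\tilde\bfu^{\,n})\in\mathbb{S}$ multiplied by the unit-modulus scalar $\Theta_\sigma(\tilde\bfu^{\,n})$, which leaves the $\bfL^2(\D)$-norm invariant; since $\bfu^0=\tilde\bfu^{\,0}\in\mathbb{S}$, an immediate induction gives $\bfu^n,\tilde\bfu^{\,n}\in\mathbb{S}$ for all $n$. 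This membership is exactly what is needed to invoke Lemma~\ref{lem:J-phase-relation-inverse} and \eqref{phase-identity-Phi-sigma}, which are stated for arguments on $\mathbb{S}$. The base case is trivial: $\tilde\bfu^{\,0}=\bfu^0$, so $\omega_0:=0$ works.

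\textbf{Inductive step.} Assume $\tilde\bfu^{\,n}=e^{\ci\omega_n}\bfu^n$ for some $\omega_n\in\R$. Applying \eqref{phase-identity-Phi-sigma} with $\bfv=\bfu^n\in\mathbb{S}$ gives $\Phi_\sigma(\tilde\bfu^{\,n})=\Phi_\sigma(e^{\ci\omega_n}\bfu^n)=e^{\ci\omega_n}\Phi_\sigma(\bfu^n)=e^{\ci\omega_n}\bfu^{n+1}$. Next I would compute $\Theta_\sigma(\tilde\bfu^{\,n})=\Theta_\sigma(e^{\ci\omega_n}\bfu^n)$: from $\Psi_\sigma(e^{\ci\omega_n}\bfu^n)=e^{\ci\omega_n}\Psi_\sigma(\bfu^n)$ one gets $\int_\D\bfu\cdot\overline{\Psi_\sigma(e^{\ci\omega_n}\bfu^n)}\dx=e^{-\ci\omega_n}\int_\D\bfu\cdot\overline{\Psi_\sigma(\bfu^n)}\dx$, so the two defining integrals of $\Theta_\sigma$ vanish simultaneously. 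If $\int_\D\bfu\cdot\overline{\Psi_\sigma(\bfu^n)}\dx=0$, then $\Theta_\sigma(\tilde\bfu^{\,n})=1$ and hence $\tilde\bfu^{\,n+1}=e^{\ci\omega_n}\bfu^{n+1}$, so $\omega_{n+1}:=\omega_n$ works. Otherwise the definition of $\Theta_\sigma$ yields $\Theta_\sigma(\tilde\bfu^{\,n})=e^{-\ci\omega_n}\Theta_\sigma(\bfu^n)$, and therefore $\tilde\bfu^{\,n+1}=\Theta_\sigma(\tilde\bfu^{\,n})\,\Phi_\sigma(\tilde\bfu^{\,n})=\Theta_\sigma(\bfu^n)\,\bfu^{n+1}$; since $\Theta_\sigma(\bfu^n)\in S^1$ we may write $\Theta_\sigma(\bfu^n)=e^{\ci\omega_{n+1}}$ for a suitable $\omega_{n+1}\in\R$. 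In both cases $\tilde\bfu^{\,n+1}=e^{\ci\omega_{n+1}}\bfu^{n+1}$, closing the induction, and $|\tilde\bfu^{\,n}(x)|=|e^{\ci\omega_n}|\,|\bfu^n(x)|=|\bfu^n(x)|$ for a.e.\ $x\in\D$.

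\textbf{Remark on well-definedness.} One should also note that the auxiliary iteration is well-defined whenever the $J$-iteration is: if $\sigma\notin\Lambda(\bfu^n)$, then by the phase-invariance of the generalized eigenvalue set (a consequence of Lemma~\ref{lem:J-phase-relation}) also $\sigma\notin\Lambda(e^{\ci\omega_n}\bfu^n)=\Lambda(\tilde\bfu^{\,n})$, so $\calJsigma{\tilde\bfu^{\,n}}$ is invertible, and $\|\Psi_\sigma(\tilde\bfu^{\,n})\|_{\bfL^2(\D)}=\|\Psi_\sigma(\bfu^n)\|_{\bfL^2(\D)}\neq0$, making the normalisation in $\Phi_\sigma$ meaningful. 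The only genuinely delicate point in the whole argument is keeping the two branches of $\Theta_\sigma$ consistent, but since the relevant integrals differ by the nonvanishing factor $e^{-\ci\omega_n}$ they are triggered simultaneously, and the proof goes through without further difficulty.
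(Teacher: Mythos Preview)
Your proof is correct and follows the same inductive strategy as the paper, hinging on the phase identity \eqref{phase-identity-Phi-sigma} and the fact that $\Theta_\sigma$ is $S^1$-valued. The paper's version is slightly leaner in the inductive step: rather than computing $\Theta_\sigma(\tilde\bfu^{\,n})$ explicitly and handling the case split, it simply writes $\Theta_\sigma(\tilde\bfu^{\,n})=e^{\ci\alpha_n}$ for some $\alpha_n\in\R$ and sets $\omega_{n+1}:=\omega_n+\alpha_n$, which is all that is needed for the conclusion.
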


\begin{proof}
We use induction on $n$. Since $\tilde \bfu^{\,0}=\bfu^0$, the claim holds
for $n=0$ with $\omega_0:=0$.

Assume that for some $n\ge 0$ there exists $\omega_n\in\R$ such that
\begin{align}\label{eq:aux-it-ind-hyp}
  \tilde \bfu^{\,n} = e^{\ci\omega_n}\,\bfu^n.
\end{align}
We show that the same holds for $n+1$. By definition of the auxiliary iteration,
\begin{align*}
\tilde \bfu^{\,n+1}
  =\Theta_{\sigma}(\tilde \bfu^{\,n})\,
    \Phi_{\sigma}(\tilde \bfu^{\,n}).
\end{align*}
Using the induction hypothesis \eqref{eq:aux-it-ind-hyp} and the
phase identity \eqref{phase-identity-Phi-sigma},
\begin{align*}
\Phi_{\sigma}(\tilde \bfu^{\,n})
 =\Phi_{\sigma}(e^{\ci\omega_n}\bfu^n)
 =e^{\ci\omega_n}\Phi_{\sigma}(\bfu^n)
 =e^{\ci\omega_n}\bfu^{n+1}.
 \end{align*}
Hence,
\begin{align*}
  \tilde \bfu^{\,n+1}
  &=\Theta_{\sigma}(\tilde \bfu^{\,n})\,
    \Phi_{\sigma}(\tilde \bfu^{\,n})
   =\Theta_{\sigma}(\tilde \bfu^{\,n})\,
     e^{\ci\omega_n}\bfu^{n+1}.
\end{align*}
By definition, the phase map $\Theta_{\sigma}(\cdot)$ takes values in
$S^1=\{z\in\C:|z|=1\}$, so we can write
\begin{align*}
\Theta_{\sigma}(\tilde \bfu^{\,n}) = e^{\ci\alpha_n}
\quad\text{for some }\alpha_n\in\R.
\end{align*}
Setting $\omega_{n+1}:=\omega_n+\alpha_n$, we obtain
\begin{align*}
\tilde \bfu^{\,n+1}
  =e^{\ci(\omega_n+\alpha_n)}\bfu^{n+1}
  =e^{\ci\omega_{n+1}}\bfu^{n+1},
\end{align*}
which is the desired relation for $n+1$. This completes the induction.\\[0.4em]
The pointwise modulus identity follows immediately from
$\tilde \bfu^{\,n} = e^{\ci\omega_n}\bfu^n$ and $|e^{\ci\omega_n}|=1$.
\end{proof}

This lemma will be used later in the final proof of convergence to connect the convergence rate from the auxiliary iteration to that of the original iteration. 

Next, we are interested in the spectral radius of \(\tilde{\Phi}_{\sigma}'(\bfu)\) which characterizes the convergence rate of the auxiliary iteration. For this, we need to show that \(\tilde{\Phi}_{\sigma}'(\bfu) : \bfH^1_0(\D) \rightarrow \bfH^1_0(\D)\) is a compact operator and hence the spectrum is discrete and only comprises of eigenvalues and \(0\). Thus, we only need to consider eigenvalues for the spectrum. In the following, we will analyze \(\tilde{\Phi}_{\sigma}'(\bfu)[\ci \bfu]\) and show that \(\ci \bfu\) is an eigenfunction with eigenvalue \(0\). This means that \(\ci \bfu\) no longer poses a problem in our spectral radius. We will then show that there is no other eigenfunction with an eigenvalue of absolute value larger than \(1\) provided that the shift $\sigma$ is sufficiently close to $\lambda$. 
Calculating the derivative \(\tilde{\Phi}_{\sigma}'( \bfu) [\bfw]\) for arbitrary \( \bfw \in \bfH_0^1(\D)\) requires determining the derivatives of \(\Phi_{\sigma}( \bfu)\) (which we established in Lemma \ref{lem:derivatives:psi-phi-prime}) and of \(\Theta_{\sigma}( \bfu)\). It holds
\begin{align}
\label{eq:phi-aux-prime-v1}
\tilde{\Phi}_{\sigma}'( \bfu) [\bfw] = (\Theta_{\sigma}'( \bfu) [\bfw])\Phi_{\sigma}( \bfu) + \Theta_{\sigma}( \bfu)\Phi_{\sigma}'( \bfu) [\bfw] = (\Theta_{\sigma}'( \bfu) [\bfw]) \bfu + \Phi_{\sigma}'(\bfu) [\bfw]. 
\end{align}

\begin{lemma}\label{lem:aux-derivative-real}
Assume \ref{A1}--\ref{A3}. 
Let $(\bfu,\lambda)\in\S \times \R$ be an eigenpair of the SO-GPE and $\sigma\notin\Lambda(\bfu)$.
Then $\Theta_{\sigma}$ is Fr\'echet differentiable at $\bfu$ and, for every 
$\bfw\in\bfH_0^1(\D)$, it holds
\begin{align*}
  \Theta_{\sigma}'(\bfu)[\bfw]
  =  -\ci\,\bigl(\Psi_{\sigma}'(\bfu)[\bfw],\ci\bfu\bigr)_{\bfL^2(\D)}. 
\end{align*}
\end{lemma}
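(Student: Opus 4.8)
The plan is to identify $\Theta_\sigma$ with a smooth composition in a neighbourhood of $\bfu$ and then differentiate via the chain rule. First I would remove the nonsmooth case from the case distinction: since $(\bfu,\lambda)$ is an eigenpair, we showed earlier that $\Psi_\sigma(\bfu)=\bfu$, hence
\[
  g(\bfu):=\int_\D\bfu\cdot\overline{\Psi_\sigma(\bfu)}\,\dx=\|\bfu\|_{\bfL^2(\D)}^2=1\neq 0 .
\]
Because $\Psi_\sigma:\bfH_0^1(\D)\to\bfH_0^1(\D)$ is real Fréchet differentiable at $\bfu$ by Lemma~\ref{lem:derivatives:psi-phi-prime} (in particular continuous there), the scalar map $g(\bfv):=\int_\D\bfu\cdot\overline{\Psi_\sigma(\bfv)}\,\dx\in\C$ is continuous at $\bfu$ with $g(\bfu)=1$, so $g(\bfv)\neq 0$ on a neighbourhood of $\bfu$, on which $\Theta_\sigma(\bfv)=g(\bfv)/|g(\bfv)|$ — i.e. only the first branch is active.

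Next I would record that $g$ is real Fréchet differentiable at $\bfu$ with $g'(\bfu)[\bfw]=\int_\D\bfu\cdot\overline{\Psi_\sigma'(\bfu)[\bfw]}\,\dx$; this follows from the chain rule together with the real-linearity of complex conjugation and the boundedness of the real-linear functional $\bfz\mapsto\int_\D\bfu\cdot\overline{\bfz}\,\dx$ on $\bfL^2(\D)$. Then I would compute the derivative of the normalisation map $N:\C\setminus\{0\}\to S^1$, $N(z)=z/|z|$, at $z=1$: writing $z=1+h$ and expanding $|z|=1+\Re h+O(|h|^2)$ gives $N(1+h)=1+\ci\,\Im h+O(|h|^2)$, hence $N'(1)[h]=\ci\,\Im h$. (The point worth flagging is that $N$ is differentiable only as a \emph{real} map, not holomorphically, which is precisely why only the imaginary part of $h$ survives — the target $S^1$ is one-real-dimensional.) Applying the chain rule to $\Theta_\sigma=N\circ g$ near $\bfu$ yields
\[
  \Theta_\sigma'(\bfu)[\bfw]
  = N'(g(\bfu))\bigl[g'(\bfu)[\bfw]\bigr]
  = \ci\,\Im\!\bigl(g'(\bfu)[\bfw]\bigr),
\]
which in particular establishes the claimed Fréchet differentiability of $\Theta_\sigma$ at $\bfu$.

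It then remains to translate this into the real $\bfL^2$-inner product. Using $(\cdot,\cdot)_{\bfL^2(\D)}=\Re\int_\D\cdot\,\overline{(\cdot)}\,\dx$, the identity $\overline{\ci\bfu}=-\ci\,\overline{\bfu}$, and $\int_\D\Psi_\sigma'(\bfu)[\bfw]\cdot\overline{\bfu}\,\dx=\overline{g'(\bfu)[\bfw]}$, a short computation gives
\[
  \bigl(\Psi_\sigma'(\bfu)[\bfw],\ci\bfu\bigr)_{\bfL^2(\D)}
  = \Re\!\left(-\ci\!\int_\D\Psi_\sigma'(\bfu)[\bfw]\cdot\overline{\bfu}\,\dx\right)
  = \Re\!\bigl(-\ci\,\overline{g'(\bfu)[\bfw]}\bigr)
  = -\,\Im\!\bigl(g'(\bfu)[\bfw]\bigr).
\]
Combining the two displays yields $\Theta_\sigma'(\bfu)[\bfw]=-\ci\,\bigl(\Psi_\sigma'(\bfu)[\bfw],\ci\bfu\bigr)_{\bfL^2(\D)}$, as asserted. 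The only genuine subtlety throughout is the bookkeeping between the $\C$-valued derivative $g'(\bfu)[\bfw]$ and the real inner product — i.e. remembering that all derivatives are real Fréchet derivatives and that $N$ is not holomorphic — while everything else is routine.
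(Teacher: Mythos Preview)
Your proof is correct and follows essentially the same approach as the paper: both write $\Theta_\sigma$ near $\bfu$ as the composition of the normalisation map $z\mapsto z/|z|$ with the complex-valued functional $\bfv\mapsto\int_\D\bfu\cdot\overline{\Psi_\sigma(\bfv)}\,\dx$, differentiate the normalisation at $z=1$ to obtain $h\mapsto\ci\,\Im h$, and then convert the imaginary part into the real $\bfL^2$ inner product via the identity $\Im(g'(\bfu)[\bfw])=-\bigl(\Psi_\sigma'(\bfu)[\bfw],\ci\bfu\bigr)_{\bfL^2(\D)}$. The only differences are notational (your $g,N$ are the paper's $G,g$) and in the presentation of the derivative of the normalisation map.
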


\begin{proof}
As $\sigma\notin\Lambda(\bfu)$, $\Psi_{\sigma}(\bfv)$ is well-defined and Fr\'echet differentiable in a neighborhood of $\bfu$. Define the complex-valued functional
\begin{align*}
  G(\bfv)
  := \int_{\D} \bfu \cdot \overline{\Psi_{\sigma}(\bfv)} \dx.
\end{align*}
By construction, we have $G(\bfu)=1$ and
\begin{align*}
  \Theta_{\sigma}(\bfv) = \frac{G(\bfv)}{|G(\bfv)|}
\end{align*}
for $\bfv$ in a neighbourhood of $\bfu$ where $G(\bfv)\neq 0$. Thus $\Theta_{\sigma}=g\circ G$
with $g:\C\setminus\{0\}\to S^1$ being $g(z):=z/|z|$. Let $z_0:=G(\bfu)=1$
and $h:=G'(\bfu)\bfw$. A standard computation of the
real derivative of $g$ at $z_0\in S^1$ gives
\begin{align*}
  g'(z_0) h = h - \Re(\overline{z_0} h) \, z_0  = h - \Re(h) = \ci\,\Im(h).
\end{align*}
Using the chain rule,
\begin{align*}
  \Theta_{\sigma}'(\bfu)[\bfw]
  = g'(G(\bfu))[G'(\bfu)[\bfw]]
  = \ci\,\Im\big(G'(\bfu)[\bfw]\big).
\end{align*}
Now write
\begin{align*}
  z := \int_{\D} \Psi_{\sigma}'(\bfu)[\bfw] \cdot \overline{\bfu} \dx,
\end{align*}
so that \,\,$G'(\bfu)[\bfw] = \overline{z}$.\,\,Then
\begin{align*}
  \Im\big(G'(\bfu)[\bfw]\big)
  = \Im(\overline{z})
  = - \Im(z)
  = \Re(\ci z) 
  = - \bigl(\Psi_{\sigma}'(\bfu)[\bfw],\ci\bfu\bigr)_{\bfL^2(\D)}. 
\end{align*}
Hence,
\begin{align}\label{theta:prime}
  \Theta_{\sigma}'(\bfu)[\bfw]
  = \ci\,\Im\big(G'(\bfu)[\bfw]\big)
  = -\ci\,\bigl(\Psi_{\sigma}'(\bfu)[\bfw],\ci\bfu\bigr)_{\bfL^2(\D)}.
\end{align}
\end{proof}

\noindent
Combining \eqref{eq:phi-aux-prime-v1} with
\eqref{theta:prime} yields the simplified expression
\begin{align}
  \tilde{\Phi}_{\sigma}'(\bfu)[\bfw]
  = - \ci\,\bigl(\Psi_{\sigma}'(\bfu)[\bfw],\ci\bfu\bigr)_{\bfL^2(\D)}\,\bfu
    \;+\; \Phi_{\sigma}'(\bfu)[\bfw], 
  \label{eq:phi-aux-prime}
\end{align}
which holds for SO-GPE eigenfunctions $\bfu\in\mathbb{S}$. We now prove that 
\(\tilde{\Phi}_{\sigma}'(\bfu):\bfH_0^1(\D)\to\bfH_0^1(\D)\)
is a compact operator. 

\begin{lemma}[Compactness of $\tilde{\Phi}_{\sigma}'(\bfu)$]\label{lem:compact-tildePhi}\quad\\
Assume \ref{A1}-\ref{A3},\, let $(\bfu,\lambda)\in\S \times \R$ be an eigenpair of the SO-GPE and $\sigma\notin\Lambda(\bfu)$, then
$$
\tilde{\Phi}_{\sigma}'(\bfu):\bfH_0^1(\D)\to\bfH_0^1(\D)
$$
is compact. 
\end{lemma}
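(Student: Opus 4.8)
The strategy is to read off the claim from the explicit formula \eqref{eq:phi-aux-prime} for $\tilde{\Phi}_{\sigma}'(\bfu)$ and to show that every term appearing there is a composition of bounded operators with at least one compact factor, hence compact; the sum of compact operators is compact. The single compact factor throughout will be $\calJsigma{\bfu}^{-1}\mathcal{I}:\bfL^2(\D)\to\bfH^1_0(\D)$, whose compactness is guaranteed by Lemma~\ref{lem:spectral-structure-J} (equivalently Lemma~\ref{lem:infsup}) since $\sigma\notin\Lambda(\bfu)$.

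\textbf{Step 1: decompose the operator.} Starting from
\begin{align*}
  \tilde{\Phi}_{\sigma}'(\bfu)[\bfw]
  = - \ci\,\bigl(\Psi_{\sigma}'(\bfu)[\bfw],\ci\bfu\bigr)_{\bfL^2(\D)}\,\bfu
    \;+\; \Phi_{\sigma}'(\bfu)[\bfw],
\end{align*}
I insert the expressions from Lemma~\ref{lem:derivatives:psi-phi-prime}, namely $\Psi_{\sigma}'(\bfu)[\bfw]=(\lambda-\sigma)\calJsigma{\bfu}^{-1}\mathcal{I}\bfw$ and $\Phi_{\sigma}'(\bfu)[\bfw]=(\lambda-\sigma)\bigl(\calJsigma{\bfu}^{-1}\mathcal{I}\bfw-(\calJsigma{\bfu}^{-1}\mathcal{I}\bfw,\bfu)_{\bfL^2(\D)}\bfu\bigr)$. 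This writes $\tilde{\Phi}_{\sigma}'(\bfu)$ as $(\lambda-\sigma)$ times a sum of the operator $\bfw\mapsto K\bfw:=\calJsigma{\bfu}^{-1}\mathcal{I}\bfw$ and two rank-one operators of the form $\bfw\mapsto (K\bfw,\bfg)_{\bfL^2(\D)}\,\bfu$ with $\bfg\in\{\bfu,\ci\bfu\}$.

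\textbf{Step 2: compactness of each term.} The map $K:\bfH^1_0(\D)\to\bfH^1_0(\D)$ is compact: it factors as $\bfH^1_0(\D)\hookrightarrow\bfL^2(\D)\xrightarrow{\calJsigma{\bfu}^{-1}\mathcal{I}}\bfH^1_0(\D)$, where the embedding is compact by Rellich--Kondrachov (\ref{A1}) and the second map is bounded; alternatively $\calJsigma{\bfu}^{-1}\mathcal{I}:\bfL^2(\D)\to\bfH^1_0(\D)$ is itself compact by Lemma~\ref{lem:spectral-structure-J}, and precomposition with the continuous embedding keeps it compact. Each rank-one term $\bfw\mapsto (K\bfw,\bfg)_{\bfL^2(\D)}\,\bfu$ is a composition of the bounded linear functional $\bfz\mapsto(\bfz,\bfg)_{\bfL^2(\D)}$ with the compact map $K$, followed by the bounded map $c\mapsto c\,\bfu$; hence it has finite rank (rank one) and is compact. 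A finite linear combination of compact operators is compact, which yields the claim.

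\textbf{Main obstacle.} There is no serious analytic difficulty here; the only point requiring a little care is bookkeeping about \emph{real}-linearity. All operators are real-Fr\'echet derivatives and the inner products are the real $\bfL^2$-inner product, so one must make sure the "rank-one" maps are real-linear (they are, being compositions of real-linear maps) and that the notion of compactness is applied in the real-Banach-space sense, which is entirely standard. Thus the proof is essentially a one-line structural argument once the formula \eqref{eq:phi-aux-prime} and the compactness of $\calJsigma{\bfu}^{-1}\mathcal{I}$ are in hand.
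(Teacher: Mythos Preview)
Your proposal is correct and rests on exactly the same two ingredients as the paper's proof: the explicit representation \eqref{eq:phi-aux-prime} (together with Lemma~\ref{lem:derivatives:psi-phi-prime}) and the compactness of $\calJsigma{\bfu}^{-1}\mathcal{I}$ from Lemma~\ref{lem:spectral-structure-J}/\ref{lem:infsup}. The only difference is in packaging: you argue term-by-term, writing $\tilde{\Phi}_{\sigma}'(\bfu)$ as $(\lambda-\sigma)$ times the compact operator $K=\calJsigma{\bfu}^{-1}\mathcal{I}$ plus two bounded rank-one corrections, and conclude that a finite sum of compact operators is compact; the paper instead bundles everything into the single estimate $\|\tilde{\Phi}_{\sigma}'(\bfu)[\bfw]\|_{\bfH^1(\D)}\lesssim\|\calJsigma{\bfu}^{-1}\mathcal{I}\bfw\|_{\bfH^1(\D)}\lesssim\|\bfw\|_{\bfL^2(\D)}$ and then applies Rellich--Kondrachov directly to a bounded sequence. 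Your structural decomposition is arguably slightly cleaner and makes the finite-rank nature of the correction terms transparent, while the paper's estimate has the minor advantage of yielding an explicit $\bfL^2\!\to\!\bfH^1$ bound on the whole operator in one line; either way the argument is essentially the same.
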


\begin{proof}
Using \eqref{eq:phi-aux-prime} we estimate, for arbitrary 
\(\bfw\in\bfH_0^1(\D)\),
\begin{eqnarray*}
  \lefteqn{ \|\tilde{\Phi}_{\sigma}'(\bfu)[\bfw]\|_{\bfH^1(\D)}
  \,\,\,\le\,\,\, 
  \|\Phi_{\sigma}'(\bfu)[\bfw]\|_{\bfH^1(\D)}
  \;+\;
  \bigl|
    (\Psi_{\sigma}'(\bfu)[\bfw],\ci\bfu)_{\bfL^2(\D)}
  \bigr|
  \,\|\bfu\|_{\bfH^1(\D)} } \\[0.3em]
  &\lesssim&  \|\Phi_{\sigma}'(\bfu)[\bfw]\|_{\bfH^1(\D)}
  \;+\;
  \| \Psi_{\sigma}'(\bfu)[\bfw]\|_{\bfL^2(\D)} \\
  &\overset{\eqref{eq:psi_prime},\eqref{eq:phi_prime}}{=}&
    |\lambda - \sigma| \left( \| \calJsigma{\bfu}^{-1}\mathcal{I}\bfw - (\calJsigma{\bfu}^{-1}\mathcal{I}\bfw,\bfu)_{\bfL^2(\D)}\bfu \|_{\bfH^1(\D)}
  \;+\;
  \| \calJsigma{\bfu}^{-1}\mathcal{I}\bfw \|_{\bfL^2(\D)} \right) \\
  &\overset{\lambda \not= \sigma}{\lesssim}& \| \calJsigma{\bfu}^{-1}\mathcal{I}\bfw \|_{\bfH^1(\D)}.
\end{eqnarray*}
By Lemma~\ref{lem:Fredholm-alt-J} (Fredholm alternative), we know that the operator $\calJsigma{\bfu}^{-1}:\bfL^2(\D)\to\bfH_0^1(\D)$ is bounded for $\sigma\notin\Lambda(\bfu)$. Hence,
\begin{align*}
\|\tilde{\Phi}_{\sigma}'(\bfu)[\bfw]\|_{\bfH^1(\D)} \;\lesssim\;
\| \calJsigma{\bfu}^{-1}\mathcal{I}\bfw \|_{\bfH^1(\D)}
  \;\lesssim\;
  \|\bfw\|_{\bfL^2(\D)} .
\end{align*}
In particular, we have 
$\|\tilde{\Phi}_{\sigma}'(\bfu)[\bfw]\|_{\bfH^1(\D)}\lesssim \|\bfw\|_{\bfH^1(\D)}$. 
Now let \((\bfw_n)_{n\in\mathbb{N}}\) be a bounded sequence in \(\bfH_0^1(\D)\).
By the Rellich--Kondrachov theorem, there exists a subsequence (for simplicity still denoted by $\bfw_n$) 
such that
\[
  \bfw_n \rightharpoonup \bfw
  \quad\text{in } \bfH^1(\D),
  \qquad 
  \bfw_n \to \bfw
  \quad\text{in } \bfL^2(\D) .
\]
Since \(\tilde{\Phi}_{\sigma}'(\bfu)\) is continuous from \(\bfL^2(\D)\) to 
\(\bfH^1(\D)\) (i.e., $\|\tilde{\Phi}_{\sigma}'(\bfu)[\bfw]\|_{\bfH^1(\D)} \lesssim \|\bfw\|_{\bfL^2(\D)}$), we obtain
\[
  \tilde{\Phi}_{\sigma}'(\bfu)[\bfw_n]
  \;\longrightarrow\;
  \tilde{\Phi}_{\sigma}'(\bfu)[\bfw]
  \quad\text{in } \bfH^1(\D).
\]
Hence the image of every bounded sequence under 
\(\tilde{\Phi}_{\sigma}'(\bfu)\)
contains a strongly convergent subsequence in \(\bfH^1(\D)\).
This proves that $\tilde{\Phi}_{\sigma}'(\bfu) :\bfH_0^1(\D)\to\bfH_0^1(\D)$ is compact.
\end{proof}

We have shown that, for each eigenfunction $\bfu\in\S$, the operator $\tilde{\Phi}_\sigma'(\bfu):\bfH_0^1(\D)\to\bfH_0^1(\D)$ is compact. Therefore, its spectrum consists of eigenvalues of $\tilde{\Phi}_\sigma'(\bfu)$ and $0$ (with $0$ being the only possible accumulation point). Hence, in the spectral analysis of $\tilde{\Phi}_\sigma'(\bfu)$ it suffices to consider eigenvalues.

Recall that for the local convergence analysis of the $J$-method we require the spectral radius to fulfill
\begin{align*}
\rho(\tilde{\Phi}_{\sigma}'(\bfu))<1 
\end{align*}
at the eigenfunction $\bfu$. For the original iteration \eqref{eq:iteration}, this condition could \emph{not} be satisfied (cf. \eqref{phiprimasigma-ev-iu}) because $\Phi_{\sigma}'(\bfu)[\ci\bfu]=\ci\bfu$, i.e.,\ $\ci\bfu$ was an eigenfunction with eigenvalue \(1\).  
This eigenvalue arises from the gauge invariance of the SO--GPE.

On the contrary, using the identity \eqref{eq:psi_prime},
\[
\Psi_{\sigma}'(\bfu)[\bfw]
  = (\lambda-\sigma)\,\calJsigma{\bfu}^{-1}\mathcal{I}\bfw,
\]
we evaluate the real $\bfL^2$--inner product appearing in \eqref{eq:phi-aux-prime}.  
For $\bfw=\ci\bfu$ we obtain
\begin{align}
\bigl(\Psi_{\sigma}'(\bfu)[\ci\bfu],\,\ci\bfu\bigr)_{\bfL^2(\D)}
&= (\lambda-\sigma)\bigl(\calJsigma{\bfu}^{-1}\mathcal{I}[\ci\bfu],\,\ci\bfu\bigr)_{\bfL^2(\D)} .
\label{eq:psi-ci-u}
\end{align}
The phase identity in Lemma \ref{lem:J-phase-relation-inverse} yields
\begin{align*}
  \calJsigma{\ci \bfu}^{-1}\,\mathcal{I}\,[\ci \bfu]
  =
  \ci\,\calJsigma{\bfu}^{-1}\,\mathcal{I}\bfu = (\lambda-\sigma)^{-1} \ci\bfu
\end{align*}
Hence, \eqref{eq:psi-ci-u} becomes
\begin{align*}
\bigl(\Psi_{\sigma}'(\bfu)[\ci\bfu],\,\ci\bfu\bigr)_{\bfL^2(\D)} = \frac{ \lambda-\sigma}{ \lambda-\sigma} \, \| \ci \bfu \|_{\bfL^2(\D)}^2
  = 1.
\end{align*}
Now we use \eqref{eq:phi-aux-prime}, i.e., $\tilde{\Phi}_{\sigma}'(\bfu)[\bfw] = - \ci\,\bigl(\Psi_{\sigma}'(\bfu)[\bfw],\ci\bfu\bigr)_{\bfL^2(\D)}\,\bfu + \Phi_{\sigma}'(\bfu)[\bfw] $, 
to obtain
\begin{align*}
\tilde{\Phi}_{\sigma}'(\bfu)[\ci \bfu]
  &= - \ci\,\bigl(\Psi_{\sigma}'(\bfu)[\ci \bfu],\ci\bfu\bigr)_{\bfL^2(\D)}\,\bfu
    \;+\; \Phi_{\sigma}'(\bfu)[\ci \bfu ]\\
  &= - \ci \bfu
    \;+\; \Phi_{\sigma}'(\bfu)[\ci \bfu ]\overset{\Phi_{\sigma}'(\bfu)[\ci\bfu]=\ci\bfu}{=}  \mathbf0.
\end{align*}
Thus,
\begin{align*}
\tilde{\Phi}_{\sigma}'(\bfu)[\ci\bfu]=\mathbf 0,
\end{align*}
i.e.,\ $\ci\bfu$ is now an eigenfunction with eigenvalue \(0\).  
Hence the problematic eigenvalue \(1\) no longer appears in the derivative of the iteration map, which is essential for achieving $\rho(\tilde{\Phi}_{\sigma}'(\bfu))<1$ in Ostrowski's theorem.

Let us summarize the spectral consequences. 
By Lemma~\ref{lem:compact-tildePhi}, the operator $\tilde{\Phi}_{\sigma}'(\bfu)$ 
is compact on $\bfH_0^1(\D)$; moreover, since 
$\calJsigma{\bfu}^{-1}:\bfH^{-1}(\D)\to\bfH_0^1(\D)$ is bounded and 
$\mathcal{I}:\bfH_0^1(\D)\to\bfH^{-1}(\D)$ is compact, the composition 
$\calJsigma{\bfu}^{-1}\mathcal{I}$ is compact as well. 
Hence (cf.\ \cite{HuR11}):
\begin{itemize}
\item their spectra consist solely of eigenvalues of finite multiplicity;
\item the only possible accumulation point is \(0\);
\item their (nonzero) spectra agree with those of their Hilbert-space adjoints.
\end{itemize}
In what follows, we show that adjoint eigenfunctions of 
$\calJsigma{\bfu}^{-1}\mathcal{I}$ and $\tilde{\Phi}_{\sigma}'(\bfu)$ 
are orthogonal to $\bfu$ and to $\ci\bfu$.  
These orthogonality relations are crucial for establishing the correspondence 
between the spectra of
\[
\calJ{\bfu}
\qquad\text{and}\qquad
\tilde{\Phi}_{\sigma}'(\bfu),
\]
and for proving that all nonzero eigenvalues of $\tilde{\Phi}_{\sigma}'(\bfu)$ 
lie strictly inside the unit disk for a suitable choice of~$\sigma$.

\begin{lemma}\label{lem:J-adjoint}
Assume \ref{A1}-\ref{A3}, let $(\bfu,\lambda)\in\S \times \R$ be an SO-GPE eigenpair and $\sigma\notin\Lambda(\bfu)$. Let \(\mu\neq\lambda\) be a (real) eigenvalue of \(\calJ{\bfu}\), i.e., there exists
\(\bfz \neq \mathbf 0\) such that
\[
  \calJ{\bfu}\bfz = \mu\,\mathcal{I}\bfz.
\]
Then \((\mu - \sigma)^{-1}\) is an eigenvalue of the compact operator
\[
  \calJsigma{\bfu}^{-1}\mathcal{I}:\bfL^2(\D)\to \bfL^2(\D)
\]
and of its Hilbert-space adjoint 
\(\bigl(\calJsigma{\bfu}^{-1}\mathcal{I}\bigr)^\ast:\bfL^2(\D)\to \bfL^2(\D)\).
Let \(\bfz^\ast\in\bfL^2(\D)\setminus\{\mathbf 0\}\) be a corresponding adjoint eigenfunction, i.e.,
\[
  \bigl(\calJsigma{\bfu}^{-1}\mathcal{I}\bfv,\bfz^\ast\bigr)_{\bfL^2(\D)}
  = (\mu - \sigma)^{-1}\,(\bfv,\bfz^\ast)_{\bfL^2(\D)}
  \qquad\mbox{for all }\,\bfv\in L^2(\D).
\]
Then \(\bfz^\ast\in \bfH_0^1(\D)\) and
\(\bfz^\ast\in T_{\bfu}\S\cap T_{\ci\bfu}\S\).
\end{lemma}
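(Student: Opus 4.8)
The plan is to reduce the statement to standard spectral theory for compact operators, then identify the relevant adjoint explicitly, and finally read off the orthogonality from a weak ``left eigenproblem''. I would proceed in three steps.

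\emph{Step 1: reduction to a compact operator.} Starting from $\calJ{\bfu}\bfz=\mu\,\mathcal I\bfz$, subtracting $\sigma\,\mathcal I\bfz$ gives $\calJsigma{\bfu}\bfz=(\mu-\sigma)\,\mathcal I\bfz$. Since $\mu\in\Lambda(\bfu)$ while $\sigma\notin\Lambda(\bfu)$, we have $\mu\neq\sigma$, so by Lemma~\ref{lem:spectral-structure-J} the operator $\calJsigma{\bfu}$ is boundedly invertible and $\calJsigma{\bfu}^{-1}\mathcal I\bfz=(\mu-\sigma)^{-1}\bfz$, with $(\mu-\sigma)^{-1}\neq 0$. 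By Lemma~\ref{lem:spectral-structure-J} (or Lemma~\ref{lem:infsup}), $T:=\calJsigma{\bfu}^{-1}\mathcal I$ is compact from $\bfL^2(\D)$ to $\bfL^2(\D)$ (composing the bounded map into $\bfH_0^1(\D)$ with the embedding $\bfH_0^1(\D)\hookrightarrow\bfL^2(\D)$), so $(\mu-\sigma)^{-1}$ is a nonzero eigenvalue of $T$; by the Riesz--Schauder theory for compact operators it is then also an eigenvalue of the Hilbert-space adjoint $T^\ast$ on $\bfL^2(\D)$. This yields the first assertion.

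\emph{Step 2: identifying $T^\ast$ and the regularity of $\bfz^\ast$.} Let $\calJsigma{\bfu}^\ast:\bfH_0^1(\D)\to\bfH^{-1}(\D)$ denote the adjoint of $\calJsigma{\bfu}$, i.e.\ the operator of the transposed bilinear form; it is boundedly invertible because $\calJsigma{\bfu}$ is. For $\bfv,\bfw\in\bfL^2(\D)$, set $\bfy:=\calJsigma{\bfu}^{-1}\mathcal I\bfv$ and $\bfx:=(\calJsigma{\bfu}^\ast)^{-1}\mathcal I\bfw$; then $\langle\calJsigma{\bfu}\bfy,\bfx\rangle=\langle\calJsigma{\bfu}^\ast\bfx,\bfy\rangle$ and a short duality computation gives $(T\bfv,\bfw)_{\bfL^2(\D)}=(\bfv,(\calJsigma{\bfu}^\ast)^{-1}\mathcal I\bfw)_{\bfL^2(\D)}$, hence $T^\ast=(\calJsigma{\bfu}^\ast)^{-1}\mathcal I$. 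In particular $T^\ast$ maps $\bfL^2(\D)$ into $\bfH_0^1(\D)$, so from $\bfz^\ast=(\mu-\sigma)\,T^\ast\bfz^\ast$ we obtain $\bfz^\ast\in\bfH_0^1(\D)$, and the relation $\calJsigma{\bfu}^\ast\bfz^\ast=(\mu-\sigma)\,\mathcal I\bfz^\ast$ unpacks to the weak left eigenproblem
\begin{align*}
  \langle\calJ{\bfu}\bfv,\bfz^\ast\rangle=\mu\,(\bfv,\bfz^\ast)_{\bfL^2(\D)}
  \qquad\text{for all }\bfv\in\bfH_0^1(\D).
\end{align*}

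\emph{Step 3: orthogonality.} Recall that $\bfu$ is a right eigenfunction of $\calJ{\bfu}$ to $\lambda$, since $\calJ{\bfu}\bfu=\calA{\bfu}\bfu=\lambda\,\mathcal I\bfu$, and that $\ci\bfu$ is likewise a right eigenfunction to $\lambda$ by Lemma~\ref{lem:J-gauge} (cf.\ Lemma~\ref{lem:J-mult-2-nonsym}). Testing the weak left eigenproblem of Step~2 with $\bfv=\bfu$ gives $\mu\,(\bfu,\bfz^\ast)_{\bfL^2(\D)}=\langle\calJ{\bfu}\bfu,\bfz^\ast\rangle=\lambda\,(\bfu,\bfz^\ast)_{\bfL^2(\D)}$, so $(\lambda-\mu)(\bfu,\bfz^\ast)_{\bfL^2(\D)}=0$ and hence $(\bfu,\bfz^\ast)_{\bfL^2(\D)}=0$ because $\lambda\neq\mu$; thus $\bfz^\ast\in T_{\bfu}\S$. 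The identical computation with $\bfv=\ci\bfu$ yields $(\ci\bfu,\bfz^\ast)_{\bfL^2(\D)}=0$, i.e.\ $\bfz^\ast\in T_{\ci\bfu}\S$, and together $\bfz^\ast\in T_{\bfu}\S\cap T_{\ci\bfu}\S$, completing the proof.

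I expect Step~2 to be the main obstacle: one must carefully track the three dual pairings among $\bfL^2(\D)$, $\bfH_0^1(\D)$ and $\bfH^{-1}(\D)$ and verify that the $\bfL^2$-Hilbert adjoint of $T$ is represented by $(\calJsigma{\bfu}^\ast)^{-1}\mathcal I$, whose range lies in $\bfH_0^1(\D)$. This is exactly the point that upgrades the a priori merely $\bfL^2$ eigenfunction $\bfz^\ast$ to an element of $\bfH_0^1(\D)$ and produces the weak left eigenproblem on which Step~3 rests; the remaining arguments are routine.
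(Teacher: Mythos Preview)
Your proof is correct. The paper reaches the same conclusion more directly: rather than identifying $T^\ast$ explicitly and passing to a weak left eigenproblem for $\calJ{\bfu}$, it simply inserts $\bfv=\bfu$ and $\bfv=\ci\bfu$ into the defining adjoint relation $(T\bfv,\bfz^\ast)_{\bfL^2}=(\mu-\sigma)^{-1}(\bfv,\bfz^\ast)_{\bfL^2}$ and uses $T\bfu=(\lambda-\sigma)^{-1}\bfu$, $T[\ci\bfu]=(\lambda-\sigma)^{-1}\ci\bfu$ to obtain $(\lambda-\sigma)^{-1}(\bfu,\bfz^\ast)_{\bfL^2}=(\mu-\sigma)^{-1}(\bfu,\bfz^\ast)_{\bfL^2}$ (and similarly for $\ci\bfu$), whence $\mu\neq\lambda$ forces the orthogonality. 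This entirely bypasses what you flag as the main obstacle (your Step~2): the paper never needs the formula $T^\ast=(\calJsigma{\bfu}^\ast)^{-1}\mathcal I$, and it dispatches the $\bfH_0^1$-regularity of $\bfz^\ast$ as a one-line remark via the Fredholm alternative. Your route buys an explicit description of $T^\ast$ and a clean left-eigenproblem characterization of $\bfz^\ast$, which is conceptually informative and reusable; the paper's route buys economy.
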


\begin{proof}
Canonically (cf. Lemma~\ref{lem:Fredholm-alt-J}), eigenfunctions of \(\calJsigma{\bfu}^{-1}\mathcal{I}\) and of its
adjoint belong to \(\bfH_0^1(\D)\), hence \(\bfz^\ast\in\bfH_0^1(\D)\).
Note that since we work on a real Hilbert space, the spectrum of 
\(\calJsigma{\bfu}^{-1}\mathcal{I}\) is understood in the sense of the
complexification; see, e.g. \cite{HuR11}. In particular, nonzero spectral
values are eigenvalues, and the nonzero spectra of a compact operator and of
its Hilbert-space adjoint coincide up to complex conjugation. As we only use
real eigenvalues, this distinction is not relevant in what follows.

Taking \(\bfv=\bfu\) in the adjoint eigenrelation gives
\[
  \bigl(\calJsigma{\bfu}^{-1}\mathcal{I}\bfu,\bfz^\ast\bigr)_{\bfL^2(\D)}
  = (\mu - \sigma)^{-1}(\bfu,\bfz^\ast)_{\bfL^2(\D)}.
\]
Since \((\bfu,\lambda)\) is an eigenpair of \(\calJ{\bfu}\), we have
\[
  \calJ{\bfu}\bfu = \lambda\,\mathcal{I}\bfu
  \quad\Longrightarrow\quad
  \calJsigma{\bfu}\bfu = (\lambda - \sigma)\,\mathcal{I}\bfu
  \quad\Longrightarrow\quad
  \calJsigma{\bfu}^{-1}\mathcal{I}\bfu = (\lambda - \sigma)^{-1}\bfu.
\]
Inserting this into the previous identity yields
\[
  (\lambda - \sigma)^{-1}(\bfu,\bfz^\ast)_{\bfL^2(\D)}
  = (\mu - \sigma)^{-1}(\bfu,\bfz^\ast)_{\bfL^2(\D)}.
\]
Since \(\mu\neq\lambda\), it follows that
\((\bfu,\bfz^\ast)_{\bfL^2(\D)}=0\), i.e., \(\bfz^\ast\in T_{\bfu}\S\).

By the gauge-direction identity for $\calJ{\bfu}$ (cf. Lemma \ref{lem:J-gauge}), \(\ci\bfu\) is also an eigenfunction of
\(\calJ{\bfu}\) with the same eigenvalue \(\lambda\), hence
\[
  \calJ{\bfu}[\ci\bfu ]= \lambda\,\mathcal{I}[\ci\bfu]
  \quad\Longrightarrow\quad
  \calJsigma{\bfu}[\ci\bfu ]= (\lambda - \sigma)\,\mathcal{I}[\ci\bfu]
  \quad\Longrightarrow\quad \calJsigma{\bfu}^{-1}\mathcal{I}[\ci\bfu]
  = (\lambda - \sigma)^{-1}\ci\bfu.
\]
Choosing \(\bfv=\ci\bfu\) in the adjoint eigenrelation gives
\[
  (\lambda - \sigma)^{-1}(\ci\bfu,\bfz^\ast)_{\bfL^2(\D)}
  = (\mu - \sigma)^{-1}(\ci\bfu,\bfz^\ast)_{\bfL^2(\D)}.
\]
Again using \(\mu\neq\lambda\), we obtain
\((\ci\bfu,\bfz^\ast)_{\bfL^2(\D)}=0\), i.e.,
\(\bfz^\ast\in T_{\ci\bfu}\S\). Thus \(\bfz^\ast\in T_{\bfu}\S\cap T_{\ci\bfu}\S\).
\end{proof}

As a side note, the previous result implies that $\lambda$ is the smallest eigenvalue of $\calJ{\bfu}$ if $\bfu$ is a minimizer of $E$.

\begin{conclusion}\label{concl:lambda-smallest-eigenvalue}
Assume \ref{A1}-\ref{A3} and let $(\bfu,\lambda)\in\S \times \R$ be (local) minimizer of $E$ on $\mathbb{S}$. Then $\lambda$ is the smallest eigenvalue of $\calJ{\bfu}$ in the following sense:\\[0.3em]
It holds $\calJ{\bfu}\bfu =  \lambda\,\mathcal{I}\bfu$ and 
if $\mu\in\R$ is another eigenvalue of $\calJ{\bfu}$, i.e.,
there exists $\bfz\in \mathbb{S}$ such that
\begin{equation}
\label{eq:J-eig-real}
  \calJ{\bfu}\bfz = \mu\,\mathcal{I}\bfz,
\end{equation}
then $\mu\ge\lambda$.
\end{conclusion}

\begin{proof}
Let $\mu\in\R$ be a real eigenvalue of $\calJ{\bfu}$ with
$\mu\neq\lambda$, i.e., WLOG there exists $\bfz\in\mathbb{S}$ fulfilling \eqref{eq:J-eig-real}. By Lemma~\ref{lem:J-adjoint}, there is a corresponding adjoint
eigenfunction $\bfz^\ast\in\mathbb{S}$ satisfying
\[
  \bigl(\calJ{\bfu}^{-1}\mathcal{I}\bfv,\bfz^\ast\bigr)_{\bfL^2(\D)}
  = \mu^{-1}\,(\bfv,\bfz^\ast)_{\bfL^2(\D)}
  \qquad\forall\,\bfv\in\bfH^1_0(\D),
\]
and moreover,
\[
  \bfz^\ast\in T_{\bfu}\S\cap T_{\ci\bfu}\S \subset T_{\bfu}\S.
\]
Recalling from \eqref{identity-Ju-Eprimeprime} that $\langle \calJ{\bfu}\bfv,\bfv\rangle = \langle E''(\bfu)\bfv,\bfv\rangle$ for all $\bfv\in T_{\bfu}\S$, we can select $\bfv=\bfz^\ast \in T_{\bfu}\S$ to obtain
\begin{eqnarray*}
 \mu =  \mu\,(\bfz^\ast,\bfz^\ast)_{\bfL^2(\D)}
 = \langle \bfz^\ast,\calJ{\bfu}^\ast\bfz^\ast \rangle
 = \langle \calJ{\bfu}\bfz^\ast,\bfz^\ast\rangle
   = \langle E''(\bfu)\bfz^\ast,\bfz^\ast\rangle \overset{\eqref{necessary-cond-min}}{\ge} \lambda\,(\bfz^\ast,\bfz^\ast)_{\bfL^2(\D)} = \lambda,
\end{eqnarray*}
where we used the necessary condition for minimizers.
\end{proof}
Next, we state how the spectrum of $\tilde{\Phi}_{\sigma}'(\bfu)$ is related to $\bfu$ and $\ci \bfu$.
\begin{lemma}\label{lem:aux-phi-prime-adjoint}
Assume \ref{A1}--\ref{A3}, and let $(\bfu,\lambda)\in\S \times \R$ be an SO-GPE eigenpair with $\sigma\notin\Lambda(\bfu)$.
Let \(\mu\neq0\) be a (real) eigenvalue of the compact operator
\(\tilde{\Phi}_{\sigma}'(\bfu):\bfL^2(\D)\to \bfL^2(\D)\), and hence also of its
Hilbert-space adjoint \((\tilde{\Phi}_{\sigma}'(\bfu))^\ast:\bfL^2(\D)\to \bfL^2(\D)\).
Let \(\bfw^\ast\in \bfL^2(\D)\setminus\{\boldsymbol 0\}\) be an adjoint eigenfunction, i.e.,
\begin{equation}
  \label{eq:aux-phi-prime-adjoint}
  \bigl(\tilde{\Phi}_{\sigma}'(\bfu)[\bfv],\bfw^\ast\bigr)_{\bfL^2(\D)}
  = \mu\,(\bfv,\bfw^\ast)_{\bfL^2(\D)}
  \qquad\mbox{for all }\,\bfv\in \bfL^2(\D).
\end{equation}
Then \(\bfw^\ast\in \bfH^1_0(\D)\) and \(\bfw^\ast\in T_{\bfu}\S\cap T_{\ci \bfu}\S\).
Moreover,
\[
  \tilde{\Phi}_{\sigma}'(\bfu)[\bfu] = 0
  \quad\text{and}\quad
  \tilde{\Phi}_{\sigma}'(\bfu)[\ci\bfu] = 0.
\]
\end{lemma}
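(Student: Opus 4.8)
The plan is to work throughout with the explicit formula for $\tilde{\Phi}_{\sigma}'(\bfu)$ obtained by combining \eqref{eq:phi-aux-prime} with the derivative formulas \eqref{eq:psi_prime}--\eqref{eq:phi_prime} of Lemma~\ref{lem:derivatives:psi-phi-prime}. Write $K:=\calJsigma{\bfu}^{-1}\mathcal I$, which by Lemma~\ref{lem:spectral-structure-J} (resp.\ Lemma~\ref{lem:infsup}) is compact from $\bfL^2(\D)$ to $\bfH^1_0(\D)$, and let $P$ denote the $\bfL^2$-orthogonal projection onto the real span $\operatorname{span}_{\R}\{\bfu,\ci\bfu\}$; here $\bfu$ and $\ci\bfu$ are $\bfL^2$-orthonormal because $\|\bfu\|_{\bfL^2(\D)}=1$ and $(\bfu,\ci\bfu)_{\bfL^2(\D)}=0$. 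Substituting $\Psi_{\sigma}'(\bfu)[\bfv]=(\lambda-\sigma)K\bfv$ and $\Phi_{\sigma}'(\bfu)[\bfv]=(\lambda-\sigma)(K\bfv-(K\bfv,\bfu)_{\bfL^2(\D)}\bfu)$ into \eqref{eq:phi-aux-prime} and noting that $(K\bfv,\bfu)_{\bfL^2(\D)}\bfu+\ci(K\bfv,\ci\bfu)_{\bfL^2(\D)}\bfu=(K\bfv,\bfu)_{\bfL^2(\D)}\bfu+(K\bfv,\ci\bfu)_{\bfL^2(\D)}(\ci\bfu)=P(K\bfv)$, one collapses \eqref{eq:phi-aux-prime} into $\tilde{\Phi}_{\sigma}'(\bfu)[\bfv]=(\lambda-\sigma)(I-P)K\bfv$. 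Since the right-hand side is meaningful for every $\bfv\in\bfL^2(\D)$, this identity simultaneously supplies the extension of $\tilde{\Phi}_{\sigma}'(\bfu)$ to a compact operator on $\bfL^2(\D)$ (composition of the compact $K$ with the bounded $(I-P)$ and the scalar $\lambda-\sigma$).

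Next I would record the two neutral directions. From $\calJsigma{\bfu}\bfu=(\lambda-\sigma)\mathcal I\bfu$ and, via the gauge identity of Lemma~\ref{lem:J-gauge}, $\calJsigma{\bfu}[\ci\bfu]=(\lambda-\sigma)\mathcal I[\ci\bfu]$, one obtains $K\bfu=(\lambda-\sigma)^{-1}\bfu$ and $K[\ci\bfu]=(\lambda-\sigma)^{-1}\ci\bfu$, both lying in $\operatorname{range}(P)$. Hence $(I-P)K\bfu=(I-P)K[\ci\bfu]=\mathbf 0$, so the formula from the previous step gives $\tilde{\Phi}_{\sigma}'(\bfu)[\bfu]=\tilde{\Phi}_{\sigma}'(\bfu)[\ci\bfu]=\mathbf 0$; the vanishing on $\ci\bfu$ is exactly what was already observed before the lemma, and here it drops out of the same computation.

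The statements about the adjoint eigenfunction then follow cleanly. For the orthogonality I would simply insert $\bfv=\bfu$ and $\bfv=\ci\bfu$ into the adjoint eigenrelation \eqref{eq:aux-phi-prime-adjoint}: the left-hand sides vanish by the previous paragraph, while the right-hand sides equal $\mu(\bfu,\bfw^\ast)_{\bfL^2(\D)}$ and $\mu(\ci\bfu,\bfw^\ast)_{\bfL^2(\D)}$, so $\mu\neq 0$ forces $(\bfu,\bfw^\ast)_{\bfL^2(\D)}=(\ci\bfu,\bfw^\ast)_{\bfL^2(\D)}=0$, i.e.\ $\bfw^\ast\in T_{\bfu}\S\cap T_{\ci\bfu}\S$. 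For the regularity $\bfw^\ast\in\bfH^1_0(\D)$ I would compute the $\bfL^2$-Hilbert-space adjoint: since $P$ is self-adjoint, $(\tilde{\Phi}_{\sigma}'(\bfu))^\ast=(\lambda-\sigma)K^\ast(I-P)$, and an elementary duality argument through the $\bfH^{-1}$--$\bfH^1_0$ pairing identifies $K^\ast=(\calJsigma{\bfu}^\ast)^{-1}\mathcal I$, which maps $\bfL^2(\D)$ into $\bfH^1_0(\D)$ because $\sigma\notin\Lambda(\bfu)$ makes $\calJsigma{\bfu}^\ast$ boundedly invertible (as used in the proof of Lemma~\ref{lem:infsup}). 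Then $\mu\,\bfw^\ast=(\tilde{\Phi}_{\sigma}'(\bfu))^\ast\bfw^\ast$ lies in $\bfH^1_0(\D)$, and $\mu\neq 0$ yields $\bfw^\ast\in\bfH^1_0(\D)$.

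The only point requiring genuine care is this adjoint computation: one must be precise about working over \emph{real} Hilbert spaces — so that ``eigenvalue of the adjoint'' is understood in the complexification, exactly as in Lemma~\ref{lem:J-adjoint}, although only real eigenvalues are actually used — and about identifying the Hilbert-space adjoint of $\calJsigma{\bfu}^{-1}\mathcal I$ with $(\calJsigma{\bfu}^\ast)^{-1}\mathcal I$ via the duality pairing. Everything else is bookkeeping with the projection $P$ and the two neutral directions $\bfu,\ci\bfu$.
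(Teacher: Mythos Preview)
Your proof is correct and follows essentially the same route as the paper: compute $\tilde{\Phi}_{\sigma}'(\bfu)$ on $\bfu$ and $\ci\bfu$ using that both are eigenfunctions of $K=\calJsigma{\bfu}^{-1}\mathcal I$ to the eigenvalue $(\lambda-\sigma)^{-1}$, then test the adjoint eigenrelation with $\bfv=\bfu,\ci\bfu$ to force the orthogonalities. Your packaging via the projection identity $\tilde{\Phi}_{\sigma}'(\bfu)=(\lambda-\sigma)(I-P)K$ is a clean compression of what the paper does term by term, and your explicit computation of the adjoint as $(\lambda-\sigma)K^\ast(I-P)$ with $K^\ast=(\calJsigma{\bfu}^\ast)^{-1}\mathcal I$ spells out the regularity step that the paper dispatches with a one-line reference to Lemma~\ref{lem:J-adjoint}.
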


\begin{proof}
As in Lemma~\ref{lem:J-adjoint},
eigenfunctions of \(\tilde{\Phi}_{\sigma}'(\bfu)\) and of its adjoint belong
naturally to \(\bfH_0^1(\D)\), hence \(\bfw^\ast\in\bfH_0^1(\D)\).

\medskip\noindent
\emph{Step 1: \(\tilde{\Phi}_{\sigma}'(\bfu)[\bfu] = 0\) and
\(\tilde{\Phi}_{\sigma}'(\bfu)[\ci\bfu]=0\):}\\[0.4em]
From the definition of \(\Psi_{\sigma}\) and 
\(\calJ{\bfu}\bfu=\lambda\,\mathcal{I}\bfu\), one has $\Psi_{\sigma}'(\bfu)[\bfw]
  = (\lambda-\sigma)\,\calJsigma{\bfu}^{-1}\mathcal{I}\bfw$,  so in particular it holds $\Psi_{\sigma}'(\bfu)[\bfu] = \bfu$ and $\Psi_{\sigma}'(\bfu)[\ci\bfu ]= \ci\bfu$.
Since \((\cdot,\cdot)_{\bfL^2(\D)}\) is real, we obtain
\[
  \bigl(\Psi_{\sigma}'(\bfu)[\bfu],\ci\bfu\bigr)_{\bfL^2(\D)}
  = (\bfu,\ci\bfu)_{\bfL^2(\D)} = 0,
  \qquad
  \bigl(\Psi_{\sigma}'(\bfu)[\ci\bfu],\ci\bfu\bigr)_{\bfL^2(\D)}
  = (\ci\bfu,\ci\bfu)_{\bfL^2(\D)} = 1.
\]
Moreover, from the formula for \(\Phi_{\sigma}'(\bfu)\) we have
\[
  \Phi_{\sigma}'(\bfu)[\bfw]
  = (\lambda-\sigma)\,\calJsigma{\bfu}^{-1}\mathcal{I}\bfw
    - \bigl((\lambda-\sigma)\,\calJsigma{\bfu}^{-1}\mathcal{I}\bfw,\bfu\bigr)_{\bfL^2(\D)}\,\bfu,
\]
so
\[
  \Phi_{\sigma}'(\bfu)[\bfu] = \bfu - (\bfu,\bfu)_{\bfL^2(\D)}\,\bfu = \mathbf0,
\]
and, using the phase invariance $\Phi_{\sigma}(e^{\ci\omega}\bfu)=e^{\ci\omega}\bfu$ (cf. \eqref{phase-identity-Phi-sigma}) we get
\[
  \Phi_{\sigma}'(\bfu)[\ci\bfu ]= \ci\bfu.
\]
Now insert \(\bfw=\bfu\) into \eqref{eq:phi-aux-prime} to get 
\[
  \tilde{\Phi}_{\sigma}'(\bfu)[\bfu]
  = -\ci\,\bigl(\Psi_{\sigma}'(\bfu)[\bfu],\ci\bfu\bigr)_{\bfL^2(\D)}\,\bfu
    + \Phi_{\sigma}'(\bfu)[\bfu]
  = -\ci\cdot 0\cdot\bfu + \mathbf0 = \mathbf0.
\]
and inserting \(\bfw=\ci\bfu\) in \eqref{eq:phi-aux-prime} gives
\[
  \tilde{\Phi}_{\sigma}'(\bfu)[\ci\bfu]
  = -\ci\,\bigl(\Psi_{\sigma}'(\bfu)[\ci\bfu],\ci\bfu\bigr)_{\bfL^2(\D)}\,\bfu
    + \Phi_{\sigma}'(\bfu)[\ci\bfu]
  = -\ci\cdot 1\cdot\bfu + \ci\bfu
  = \mathbf0.
\]

\medskip\noindent
\emph{Step 2: Orthogonality to \(\bfu\) and \(\ci\bfu\).}\\[0.4em]
With \(\bfv=\bfu\) in \eqref{eq:aux-phi-prime-adjoint}, we obtain
\[
  \bigl(\tilde{\Phi}_{\sigma}'(\bfu)[\bfu],\bfz^\ast\bigr)_{\bfL^2(\D)}
  = \mu\,(\bfu,\bfz^\ast)_{\bfL^2(\D)}.
\]
The left-hand side vanishes by Step~1, so
\[
  \mu\,(\bfu,\bfz^\ast)_{\bfL^2(\D)} = 0.
\]
Since \(\mu\neq 0\), it follows that
\((\bfu,\bfz^\ast)_{\bfL^2(\D)} = 0\), i.e.,\ \(\bfz^\ast\in T_{\bfu}\S\).

Similarly, choosing \(\bfv=\ci\bfu\) in \eqref{eq:aux-phi-prime-adjoint} and
using \(\tilde{\Phi}_{\sigma}'(\bfu)[\ci\bfu]=\mathbf0\) from Step~1 gives
\[
  0 = \mu\,(\ci\bfu,\bfz^\ast)_{\bfL^2(\D)},
\]
so again \(\mu\neq 0\) implies
\((\ci\bfu,\bfz^\ast)_{\bfL^2(\D)}=0\), i.e.,\ \(\bfz^\ast\in T_{\ci\bfu}\S\).

Thus \(\bfz^\ast\in T_{\bfu}\S\cap T_{\ci\bfu}\S\), and the lemma is proved.
\end{proof}
We are now ready to establish a relation between the spectra of the \(J\)-operator
and \(\tilde{\Phi}_{\sigma}'(\bfu)\).

\begin{lemma}\label{lem:spectral-relation}
Assume \ref{A1}--\ref{A3}, and let $(\bfu,\lambda)\in\S\times\R$ be an
SO--GPE eigenpair. Let $\sigma\notin\Lambda(\bfu)$ with $\sigma\neq\lambda$,
so that $\calJsigma{\bfu}$ is invertible. Then the following holds:

\begin{itemize}
\item[(i)] If $\mu\in\R$, $\mu\neq\lambda$, is an eigenvalue of $\calJ{\bfu}$,
then
\[
  \tilde\mu := \frac{\lambda-\sigma}{\mu-\sigma}\in\R
\]
is an eigenvalue of the compact operator
$\tilde{\Phi}_{\sigma}'(\bfu): \bfL^2(\D)\to \bfL^2(\D)$.

\item[(ii)] Conversely, if $\tilde\mu\in\R$, $\tilde\mu\neq 0$, is an eigenvalue
of $\tilde{\Phi}_{\sigma}'(\bfu)$, then there exists a real eigenvalue
$\mu\neq\lambda$ of $\calJ{\bfu}$ such that
\[
  \tilde\mu = \frac{\lambda-\sigma}{\mu-\sigma}.
\]
\end{itemize}
Moreover, the eigenvalue $\lambda$ of $\calJ{\bfu}$ corresponds to the
eigenvalue $0$ of $\tilde{\Phi}_{\sigma}'(\bfu)$ in the sense that
$\bfu,\ci\bfu\in\ker\tilde{\Phi}_{\sigma}'(\bfu)$.
\end{lemma}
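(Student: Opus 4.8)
The plan is to carry the whole argument on the compact operator $K:=\calJsigma{\bfu}^{-1}\mathcal I:\bfL^2(\D)\to\bfL^2(\D)$ and its $\bfL^2$--adjoint $K^\ast$, passing freely between $\calJ{\bfu}$, $K$, $\tilde{\Phi}_\sigma'(\bfu)$ and the respective adjoints, and using repeatedly that a compact operator and its adjoint have the same nonzero eigenvalues. The step I would isolate first is a \emph{bridge identity}: by Lemma~\ref{lem:derivatives:psi-phi-prime} one has $\Psi_\sigma'(\bfu)[\bfv]=(\lambda-\sigma)K\bfv$ and $\Phi_\sigma'(\bfu)[\bfv]=\Psi_\sigma'(\bfu)[\bfv]-(\Psi_\sigma'(\bfu)[\bfv],\bfu)_{\bfL^2(\D)}\bfu$, while \eqref{eq:phi-aux-prime} gives $\tilde{\Phi}_\sigma'(\bfu)[\bfv]=\Phi_\sigma'(\bfu)[\bfv]-\ci\,(\Psi_\sigma'(\bfu)[\bfv],\ci\bfu)_{\bfL^2(\D)}\bfu$. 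Both corrections are multiples of $\bfu$, so testing against any $\bfz^\ast\in T_{\bfu}\S\cap T_{\ci\bfu}\S$ removes them and yields
\begin{align*}
\bigl(\tilde{\Phi}_\sigma'(\bfu)[\bfv],\bfz^\ast\bigr)_{\bfL^2(\D)}
=(\lambda-\sigma)\,\bigl(K\bfv,\bfz^\ast\bigr)_{\bfL^2(\D)}
\qquad\text{for all }\bfv\in\bfL^2(\D),\ \bfz^\ast\in T_{\bfu}\S\cap T_{\ci\bfu}\S.
\end{align*}
This converts adjoint eigenrelations for $K$ into adjoint eigenrelations for $\tilde{\Phi}_\sigma'(\bfu)$ and back.

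For part~(i): given $\mu\in\R$, $\mu\ne\lambda$, with $\calJ{\bfu}\bfz=\mu\mathcal I\bfz$, I would invoke Lemma~\ref{lem:J-adjoint}, by which $(\mu-\sigma)^{-1}$ is a nonzero eigenvalue of $K^\ast$ with an adjoint eigenfunction $\bfz^\ast\in\bigl(T_{\bfu}\S\cap T_{\ci\bfu}\S\bigr)\setminus\{\mathbf0\}$ obeying $(K\bfv,\bfz^\ast)_{\bfL^2(\D)}=(\mu-\sigma)^{-1}(\bfv,\bfz^\ast)_{\bfL^2(\D)}$ for all $\bfv$. Inserting this into the bridge identity gives $\bigl(\tilde{\Phi}_\sigma'(\bfu)[\bfv],\bfz^\ast\bigr)_{\bfL^2(\D)}=\tilde\mu\,(\bfv,\bfz^\ast)_{\bfL^2(\D)}$ for all $\bfv$, with $\tilde\mu=(\lambda-\sigma)/(\mu-\sigma)$ real and---since $\lambda\ne\sigma$ and $\mu\ne\sigma$ (as $\mu\in\Lambda(\bfu)$, $\sigma\notin\Lambda(\bfu)$)---nonzero. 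Hence $\tilde\mu$ is an eigenvalue of $\tilde{\Phi}_\sigma'(\bfu)^\ast$, and by compactness of $\tilde{\Phi}_\sigma'(\bfu)$ (Lemma~\ref{lem:compact-tildePhi}) also of $\tilde{\Phi}_\sigma'(\bfu)$ itself.

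For part~(ii): given a real eigenvalue $\tilde\mu\ne0$ of $\tilde{\Phi}_\sigma'(\bfu)$, hence of $\tilde{\Phi}_\sigma'(\bfu)^\ast$, Lemma~\ref{lem:aux-phi-prime-adjoint} provides an adjoint eigenfunction $\bfw^\ast\in\bigl(T_{\bfu}\S\cap T_{\ci\bfu}\S\bigr)\setminus\{\mathbf0\}$ with $(\tilde{\Phi}_\sigma'(\bfu)[\bfv],\bfw^\ast)_{\bfL^2(\D)}=\tilde\mu\,(\bfv,\bfw^\ast)_{\bfL^2(\D)}$. The bridge identity rewrites this as $(K\bfv,\bfw^\ast)_{\bfL^2(\D)}=\nu\,(\bfv,\bfw^\ast)_{\bfL^2(\D)}$ with $\nu:=\tilde\mu/(\lambda-\sigma)\ne0$, so $\bfw^\ast$ is a nonzero adjoint eigenfunction of $K$ and $\nu$ is an eigenvalue of $K$. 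By the equivalence $\calJ{\bfu}\bfz=\mu\mathcal I\bfz\Leftrightarrow K\bfz=(\mu-\sigma)^{-1}\bfz$ (Lemma~\ref{lem:spectral-structure-J}), $\mu:=\sigma+\nu^{-1}$ is then a real eigenvalue of $\calJ{\bfu}$ with $\tilde\mu=(\lambda-\sigma)/(\mu-\sigma)$.

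The remaining---and, I expect, only delicate---point is to exclude $\mu=\lambda$, i.e.\ $\nu=(\lambda-\sigma)^{-1}$. Here I would use that $(\lambda-\sigma)^{-1}$ is a \emph{semisimple} eigenvalue of the compact operator $K$: the absence of Jordan chains of $\calJ{\bfu}$ at $\lambda$ (Lemma~\ref{lem:J-mult-2-nonsym}) transfers to $K$ through the equivalence above, which preserves Jordan-chain structure, and the corresponding eigenspace is $N:=\operatorname{span}\{\bfu,\ci\bfu\}$ by the same lemma. Semisimplicity yields the $K$--invariant topological direct sum $\bfL^2(\D)=N\oplus R$ with $R:=\operatorname{ran}\!\bigl(K-(\lambda-\sigma)^{-1}\,\mathrm{id}\bigr)$ closed and $R^\perp=\ker\!\bigl(K^\ast-(\lambda-\sigma)^{-1}\,\mathrm{id}\bigr)$. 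If $\nu=(\lambda-\sigma)^{-1}$, then $K^\ast\bfw^\ast=(\lambda-\sigma)^{-1}\bfw^\ast$, so $\bfw^\ast\in R^\perp$; since Lemma~\ref{lem:aux-phi-prime-adjoint} also gives $\bfw^\ast\in N^\perp$, the vector $\bfw^\ast$ is orthogonal to both summands of $\bfL^2(\D)=N\oplus R$ and hence $\bfw^\ast=\mathbf0$, a contradiction. Therefore $\mu\ne\lambda$. Finally, the closing assertion---that $\lambda$ corresponds to the eigenvalue $0$ with $\bfu,\ci\bfu\in\ker\tilde{\Phi}_\sigma'(\bfu)$---is exactly Step~1 of Lemma~\ref{lem:aux-phi-prime-adjoint}, which I would simply quote.
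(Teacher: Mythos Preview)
Your proof follows essentially the same route as the paper's: both directions pass through adjoint eigenfunctions of $K=\calJsigma{\bfu}^{-1}\mathcal I$ and of $\tilde{\Phi}_\sigma'(\bfu)$, invoke the orthogonality Lemmas~\ref{lem:J-adjoint} and~\ref{lem:aux-phi-prime-adjoint} to land in $T_{\bfu}\S\cap T_{\ci\bfu}\S$, and then use what you call the ``bridge identity'' (the paper carries out the same computation without naming it). The compactness transfers between operator and adjoint are also identical.

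Where you differ---and improve on the paper---is the exclusion of $\mu=\lambda$ in~(ii). The paper's sentence there is loose: it only observes that $\tilde\mu=1$ would force $\mu=\lambda$ and then remarks that this ``corresponds precisely to the already treated case of the eigenvalue~$0$'', which is not an argument that $\tilde\mu=1$ cannot occur. Your approach via semisimplicity of $(\lambda-\sigma)^{-1}$ for $K$ and the Riesz decomposition $\bfL^2(\D)=N\oplus R$ is a genuine completion of this step. Note, however, that you invoke Lemma~\ref{lem:J-mult-2-nonsym}, which requires~\ref{A4}, while Lemma~\ref{lem:spectral-relation} assumes only \ref{A1}--\ref{A3}. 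This is not really your oversight: without~\ref{A4} the conclusion $\mu\neq\lambda$ can in fact fail (an extra eigenfunction $\bfz\in T_{\bfu}\S\cap T_{\ci\bfu}\S$ with $\calJ{\bfu}\bfz=\lambda\,\mathcal I\bfz$ would satisfy $\tilde{\Phi}_\sigma'(\bfu)\bfz=\bfz$, producing $\tilde\mu=1$). So the lemma as stated has a small gap that both you and the paper's proof inherit; in the paper's flow it is harmless because the lemma is only ever applied under~\ref{A4}.

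One minor wording point: the two correction terms you strip are a real multiple of $\bfu$ and a real multiple of $\ci\bfu$ (not both real multiples of $\bfu$), which is exactly why you need $\bfz^\ast\in T_{\bfu}\S\cap T_{\ci\bfu}\S$, as you correctly impose.
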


\begin{proof}
By Lemma~\ref{lem:aux-phi-prime-adjoint}, we already know
\[
  \tilde{\Phi}_{\sigma}'(\bfu)[\bfu]
  = \tilde{\Phi}_{\sigma}'(\bfu)[\ci\bfu] = \mathbf 0,
\]
so $\lambda$ corresponds to the eigenvalue $0$ of $\tilde{\Phi}_{\sigma}'(\bfu)$.

\medskip\noindent
\emph{(i) From $\calJsigma{\bfu}$ to $\tilde{\Phi}_{\sigma}'(\bfu)$.}
Let $\mu\neq\lambda$ be a real eigenvalue of $\calJ{\bfu}$, so there exists
$\bfz\neq \mathbf 0$ with
\[
  \calJ{\bfu}\bfz = \mu\,\mathcal{I}\bfz.
\]
Then, as observed before, $\calJsigma{\bfu}^{-1}\mathcal{I}\bfz = (\mu-\sigma)^{-1}\bfz$, 
hence $(\mu-\sigma)^{-1}$ is an eigenvalue of the compact operator
$T:=\calJsigma{\bfu}^{-1}\mathcal{I}: \bfL^2(\D)\to \bfL^2(\D)$.
By compact-operator theory, $(\mu-\sigma)^{-1}$ is also an eigenvalue of the
adjoint $T^\ast$, so there exists an adjoint eigenfunction
$\bfz^\ast\neq \mathbf 0$ such that
\[
  \bigl(T\bfv,\bfz^\ast\bigr)_{\bfL^2(\D)}
  = (\mu-\sigma)^{-1}(\bfv,\bfz^\ast)_{\bfL^2(\D)}
  \qquad\mbox{for all }\,\bfv\in \bfL^2(\D).
\]
By Lemma~\ref{lem:J-adjoint}, we have $\bfz^\ast\in T_{\bfu}\S\cap T_{\ci\bfu}\S$,
so
\[
  (\bfu,\bfz^\ast)_{\bfL^2(\D)} = (\ci\bfu,\bfz^\ast)_{\bfL^2(\D)} = 0.
\]
Using \eqref{eq:phi-aux-prime} and the representation of $\Phi_{\sigma}'(\bfu)$
via $\calJsigma{\bfu}^{-1}\mathcal{I}$, we obtain for arbitrary
$\bfv\in\bfH_0^1(\D)$:
\begin{align*}
  \bigl(\tilde{\Phi}_{\sigma}'(\bfu)[\bfv],\bfz^\ast\bigr)_{\bfL^2(\D)}
  &= -\ci\,\bigl(\Psi_{\sigma}'(\bfu)[\bfv],\ci\bfu\bigr)_{\bfL^2(\D)}
       (\bfu,\bfz^\ast)_{\bfL^2(\D)}
     \;+\; \bigl(\Phi_{\sigma}'(\bfu)[\bfv],\bfz^\ast\bigr)_{\bfL^2(\D)} \\
  &= \bigl(\Phi_{\sigma}'(\bfu)[\bfv],\bfz^\ast\bigr)_{\bfL^2(\D)}
     \qquad\text{(since $(\bfu,\bfz^\ast)_{\bfL^2(\D)}=0$)}.
\end{align*}
As in the proof of Lemma~\ref{lem:aux-phi-prime-adjoint}, we have
\[
  \Phi_{\sigma}'(\bfu)[\bfv]
  = (\lambda-\sigma)\,\calJsigma{\bfu}^{-1}\mathcal{I}\bfv
    - \bigl((\lambda-\sigma)\,\calJsigma{\bfu}^{-1}\mathcal{I}\bfv,\bfu\bigr)_{\bfL^2(\D)}\,\bfu,
\]
and thus, using $(\bfu,\bfz^\ast)_{\bfL^2(\D)}=0$,
\[
  \bigl(\Phi_{\sigma}'(\bfu)[\bfv],\bfz^\ast\bigr)_{\bfL^2(\D)}
  = (\lambda-\sigma)\bigl(\calJsigma{\bfu}^{-1}\mathcal{I}\bfv,\bfz^\ast\bigr)_{\bfL^2(\D)}.
\]
By the adjoint eigenrelation for $\bfz^\ast$,
\[
  \bigl(\calJsigma{\bfu}^{-1}\mathcal{I}\bfv,\bfz^\ast\bigr)_{\bfL^2(\D)}
  = (\mu-\sigma)^{-1}(\bfv,\bfz^\ast)_{\bfL^2(\D)},
\]
and hence
\[
  \bigl(\tilde{\Phi}_{\sigma}'(\bfu)[\bfv],\bfz^\ast\bigr)_{\bfL^2(\D)}
  = \frac{\lambda-\sigma}{\mu-\sigma}(\bfv,\bfz^\ast)_{\bfL^2(\D)}
  \qquad\mbox{for all }\,\bfv\in\bfH_0^1(\D).
\]
Thus $\bfz^\ast$ is an adjoint eigenfunction of
$\tilde{\Phi}_{\sigma}'(\bfu)$ with eigenvalue
\(\tilde\mu = (\lambda-\sigma)/(\mu-\sigma)\). Since
$\tilde{\Phi}_{\sigma}'(\bfu)$ is compact on $\bfL^2(\D)$, $\tilde\mu\neq 0$
is also an eigenvalue of $\tilde{\Phi}_{\sigma}'(\bfu)$ itself.

\medskip\noindent
\emph{(ii) From $\tilde{\Phi}_{\sigma}'(\bfu)$ to $\calJsigma{\bfu}$.}
Let now $\tilde\mu\in\R\setminus\{0\}$ be an eigenvalue of
$\tilde{\Phi}_{\sigma}'(\bfu)$. By compactness, $\tilde\mu$ is also an eigenvalue
of the adjoint, so there exists an adjoint eigenfunction
$\bfz^\ast\neq \mathbf 0$ such that
\[
  \bigl(\tilde{\Phi}_{\sigma}'(\bfu)[\bfv],\bfz^\ast\bigr)_{\bfL^2(\D)}
  = \tilde\mu\,(\bfv,\bfz^\ast)_{\bfL^2(\D)}
  \qquad\mbox{for all }\,\bfv\in \bfL^2(\D).
\]
By Lemma~\ref{lem:aux-phi-prime-adjoint}, we have
$\bfz^\ast\in T_{\bfu}\S\cap T_{\ci\bfu}\S$, so again
$(\bfu,\bfz^\ast)_{\bfL^2(\D)}=(\ci\bfu,\bfz^\ast)_{\bfL^2(\D)}=0$, and the same computation as above yields
\[
  \tilde\mu\,(\bfv,\bfz^\ast)_{\bfL^2(\D)}
  = (\lambda-\sigma)\bigl(\calJsigma{\bfu}^{-1}\mathcal{I}\bfv,\bfz^\ast\bigr)_{\bfL^2(\D)}
  \qquad\mbox{for all }\,\bfv\in\bfH_0^1(\D).
\]
Hence
\[
  \bigl(\calJsigma{\bfu}^{-1}\mathcal{I}\bfv,\bfz^\ast\bigr)_{\bfL^2(\D)}
  = \frac{\tilde\mu}{\lambda-\sigma}\,(\bfv,\bfz^\ast)_{\bfL^2(\D)},
\]
so $\bfz^\ast$ is an adjoint eigenfunction of
$\calJsigma{\bfu}^{-1}\mathcal{I}$ with eigenvalue
\[
  \alpha := \frac{\tilde\mu}{\lambda-\sigma}.
\]
As in Lemma~\ref{lem:J-adjoint}, this implies that $\alpha$ is a nonzero
eigenvalue of $\calJsigma{\bfu}^{-1}\mathcal{I}$ itself, i.e.,\ there exists
$\mu\in\R$ with
\[
  \alpha = \frac{1}{\mu-\sigma}
  \quad\Longleftrightarrow\quad
  \mu = \sigma + \frac{\lambda-\sigma}{\tilde\mu},
\]
and $\mu$ is an eigenvalue of $\calJ{\bfu}$. Since $\tilde\mu\neq 0$, we have
$\mu\neq\sigma$, and by construction $\mu\neq\lambda$ whenever
$\tilde\mu\neq 1$; for $\tilde\mu=1$ one obtains $\mu=\lambda$, which corresponds
precisely to the already treated case of the eigenvalue $0$ of
$\tilde{\Phi}_{\sigma}'(\bfu)$.

This proves the claimed correspondence between the spectra of \(\calJsigma{\bfu}\)
and \(\tilde{\Phi}_{\sigma}'(\bfu)\).
\end{proof}
	
As a direct consequence of Lemma~\ref{lem:spectral-relation}, the
nonzero spectrum of $\tilde{\Phi}_{\sigma}'(\bfu)$ for $\sigma \not\in\Lambda(\bfu)$ consists precisely of
\[
  \tilde\mu
  = \frac{\lambda-\sigma}{\mu-\sigma},
  \qquad
  \mu\in\Lambda(\bfu) \setminus \{ \lambda\},
\]
while $0$ is an eigenvalue of algebraic multiplicity two corresponding to
the phase directions $\bfu$ and $\ci\bfu$.  
Since $\tilde{\Phi}_{\sigma}'(\bfu)$ is compact, its spectral radius is
\[
  \rho(\tilde{\Phi}_{\sigma}'(\bfu))
  = \max_{\mu\in\Lambda(\bfu),\,\mu\neq\lambda}
     \left|\frac{\lambda-\sigma}{\mu-\sigma}\right|
  = \frac{|\lambda-\sigma|}
         {\min\limits_{\mu\in\Lambda(\bfu),\,\mu\neq\lambda}|\mu-\sigma|}.
\]

We now formulate Ostrowski's theorem for the auxiliary iteration.

\begin{lemma}\label{lem:convergence-aux}
Assume \ref{A1}--\ref{A4}.  
Let $(\bfu,\lambda)\in\S\times\R$ be a quasi--unique ground state, and let
$\tilde{\Phi}_{\sigma}:\bfH_0^1(\D)\to\bfH_0^1(\D)$ be the auxiliary mapping
with $\sigma\notin\Lambda(\bfu)$.  
Then:

\begin{enumerate}[label={(\roman*)}]
\item The Fr\'echet derivative $\tilde{\Phi}_{\sigma}'(\bfu):\bfH_0^1(\D)\to\bfH_0^1(\D)$
is compact, and its spectral radius is
\[
  \rho
  := \rho(\tilde{\Phi}_{\sigma}'(\bfu))
  = \frac{|\lambda-\sigma|}
         {\min\limits_{\mu\in\Lambda(\bfu),\,\mu\neq\lambda}|\mu-\sigma|},
\]
with $0$ being an eigenvalue of multiplicity two corresponding to the eigenfunctions
$\bfu \text{ and }\ci\bfu$.
\item
If $\rho<1$, i.e., if $\sigma$ is selected such that
\[
  |\lambda-\sigma|
   < \min_{\mu\in\Lambda(\bfu),\,\mu\neq\lambda}|\mu-\sigma|,
\]
then there exists a neighbourhood $U$ of $\bfu$ in $\bfH_0^1(\D)$ such that the
fixed-point iteration
\[
  \tilde\bfu^{n+1} = \tilde{\Phi}_{\sigma}(\tilde\bfu^n)
\]
converges strongly to $\bfu$ for all $\tilde\bfu^0\in U$.
\item
For every $\varepsilon>0$ there exist a neighbourhood $U_\varepsilon$ of
$\bfu$ and a constant $C_\varepsilon>0$ such that for all
$\tilde\bfu^0\in U_\varepsilon$ and all $n\ge1$,
\[
  \|\tilde\bfu^{n}-\bfu\|_{\bfH^1(\D)}
  \;\le\;
  C_\varepsilon (\rho+\varepsilon)^{n}
  \|\tilde\bfu^{0}-\bfu\|_{\bfH^1(\D)}.
\]
\end{enumerate}
\end{lemma}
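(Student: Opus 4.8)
The plan is to obtain all three assertions from the structural properties of $\tilde{\Phi}_{\sigma}$ already established in this section, combined with Ostrowski's theorem (Lemma~\ref{lemma:ostrowski}) applied at the fixed point $\bfu$. The preliminary step is to confirm that $\bfu$ is indeed a fixed point and that $\tilde{\Phi}_{\sigma}$ is a well-defined, real Fr\'echet differentiable self-map of $\bfH^1_0(\D)$ near $\bfu$. Since $\bfu\in\S$ we have $\Psi_{\sigma}(\bfu)=\bfu$ (computed just before Lemma~\ref{lem:derivatives:psi-phi-prime}), so $G(\bfu)=\int_{\D}\bfu\cdot\overline{\bfu}\,\dx=1$, hence $\Theta_{\sigma}(\bfu)=1$ and $\tilde{\Phi}_{\sigma}(\bfu)=\Theta_{\sigma}(\bfu)\,\Phi_{\sigma}(\bfu)=\bfu$. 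Because $\sigma\notin\Lambda(\bfu)$ (which forces $\sigma\neq\lambda$, as $\lambda\in\Lambda(\bfu)$ by Lemma~\ref{lem:J-mult-2-nonsym}), the resolvent $\calJsigma{\bfv}^{-1}\mathcal I$ depends smoothly on $\bfv$ in a neighbourhood of $\bfu$, so that $\Psi_{\sigma}(\bfv)$ stays close to $\bfu$ and nonzero and $G(\bfv)$ stays close to $1$ and nonzero; thus the normalization in $\Phi_{\sigma}$ and the phase factor $\Theta_{\sigma}$ are well defined and differentiable there, with derivative given by~\eqref{eq:phi-aux-prime}. Its compactness (in particular, boundedness) on $\bfH^1_0(\D)$ is Lemma~\ref{lem:compact-tildePhi}, which already delivers the compactness part of (i).

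Next I would pin down the spectral radius in (i). By Lemma~\ref{lem:spectral-relation}, the nonzero spectrum of the compact operator $\tilde{\Phi}_{\sigma}'(\bfu)$ is \emph{exactly} $\{(\lambda-\sigma)/(\mu-\sigma):\mu\in\Lambda(\bfu)\setminus\{\lambda\}\}$, while $\bfu$ and $\ci\bfu$ lie in $\ker\tilde{\Phi}_{\sigma}'(\bfu)$; under~\ref{A4}, Lemma~\ref{lem:J-mult-2-nonsym} gives $\ker(\calJ{\bfu}-\lambda\mathcal I)=\operatorname{span}\{\bfu,\ci\bfu\}$ and the absence of Jordan chains, which via the spectral correspondence fixes the algebraic multiplicity of the eigenvalue $0$ to be two. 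For a compact operator the spectral radius equals the largest modulus of an eigenvalue; since $\Lambda(\bfu)$ is discrete without finite accumulation point (Lemma~\ref{lem:spectral-structure-J}), we have $|\mu-\sigma|\to\infty$ along $\Lambda(\bfu)$, so the maximum of $|(\lambda-\sigma)/(\mu-\sigma)|$ over $\mu\neq\lambda$ is attained at the eigenvalue $\mu_0$ of $\calJ{\bfu}$ closest to $\sigma$ among those different from $\lambda$ (with the convention $\rho=0$ if $\Lambda(\bfu)=\{\lambda\}$). As $\sigma\notin\Lambda(\bfu)$ gives $|\mu_0-\sigma|>0$, this yields precisely $\rho=\rho(\tilde{\Phi}_{\sigma}'(\bfu))=|\lambda-\sigma|\big/\min_{\mu\in\Lambda(\bfu),\,\mu\neq\lambda}|\mu-\sigma|$.

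Finally, (ii) and (iii) follow by applying Ostrowski's theorem (Lemma~\ref{lemma:ostrowski}) with $\Phi:=\tilde{\Phi}_{\sigma}$ at the point $\bfu$. All of its hypotheses have just been verified: $\tilde{\Phi}_{\sigma}$ is real Fr\'echet differentiable at $\bfu$, $\tilde{\Phi}_{\sigma}'(\bfu)$ is a bounded real-linear operator, and the assumed gap condition $|\lambda-\sigma|<\min_{\mu\in\Lambda(\bfu),\,\mu\neq\lambda}|\mu-\sigma|$ is, by the formula just derived, exactly $\rho<1$. Ostrowski's theorem then furnishes a neighbourhood $U$ of $\bfu$ with $\|\tilde\bfu^n-\bfu\|_{\bfH^1(\D)}\to0$ for all $\tilde\bfu^0\in U$, which is (ii), and for every $\varepsilon>0$ a neighbourhood $U_\varepsilon$ and a constant $C_\varepsilon>0$ with $\|\tilde\bfu^n-\bfu\|_{\bfH^1(\D)}\le C_\varepsilon(\rho+\varepsilon)^n\|\tilde\bfu^0-\bfu\|_{\bfH^1(\D)}$ for $n\ge1$, which is (iii).

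The step requiring the most care is the spectral bookkeeping of the second paragraph: ensuring that $\tilde{\Phi}_{\sigma}'(\bfu)$ has \emph{no} eigenvalues beyond $0$ and the values $(\lambda-\sigma)/(\mu-\sigma)$ — this is precisely what the \emph{two-sided} correspondence of Lemma~\ref{lem:spectral-relation} guarantees — and that the supremum of their moduli is genuinely attained, so that the strict inequality $\rho<1$ is equivalent to the stated strict spectral-gap condition. Everything else is routine: the differentiability and compactness were already carried out in the preceding lemmas, and the conclusion is then a direct invocation of Ostrowski's theorem.
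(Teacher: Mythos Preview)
Your proposal is correct and follows essentially the same approach as the paper: compactness from Lemma~\ref{lem:compact-tildePhi}, the spectral-radius formula from the two-sided correspondence of Lemma~\ref{lem:spectral-relation} together with~\ref{A4} and Lemma~\ref{lem:J-mult-2-nonsym}, and then a direct application of Ostrowski's theorem for (ii) and (iii). You are somewhat more explicit than the paper in justifying why the supremum over $\mu\neq\lambda$ is attained (via the discreteness in Lemma~\ref{lem:spectral-structure-J}) and why $\sigma\notin\Lambda(\bfu)$ forces $\sigma\neq\lambda$, but these are welcome clarifications rather than a different route.
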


\begin{proof}
(i): Compactness follows from Lemma~\ref{lem:compact-tildePhi}.  
Lemma~\ref{lem:spectral-relation} shows that every nonzero eigenvalue of
$\tilde{\Phi}'_{\sigma}(\bfu)$ is of the form
$(\lambda-\sigma)/(\mu-\sigma)$ with $\mu\neq\lambda$.  
Because of \ref{A4}, Lemma \ref{lem:J-mult-2-nonsym} ensures that $\lambda$ has only the two phase directions
$\bfu,\ci\bfu$, and thus $\ker\tilde{\Phi}_{\sigma}'(\bfu)
=\mathrm{span}\{\bfu,\ci\bfu\}$.
Hence, the spectral-radius formula follows directly.\\[0.4em]
(ii): Since $\tilde{\Phi}_{\sigma}'(\bfu)$ is compact,
Ostrowski's theorem (Lemma \ref{lemma:ostrowski}) applies to the fixed point $\bfu$:  
if the spectral radius satisfies $\rho<1$, then the fixed point is locally
attractive.  
Thus, there exists a neighbourhood $U$ of $\bfu$ such that the iteration
converges strongly in $\bfH_0^1(\D)$.\\[0.4em]
(iii): Again, follows by Ostrowski's theorem (Lemma \ref{lemma:ostrowski}).
\end{proof}

\begin{proof}[Proof of Theorem~\ref{theo:local}]
Since $\sigma$ is chosen such that
\begin{align*}
  \rho
  =\frac{|\lambda-\sigma|}
         {\min\limits_{\mu\in\Lambda(\bfu),\,\mu\neq\lambda}|\mu-\sigma|}
  <1,
\end{align*}
Lemma~\ref{lem:convergence-aux}(i)-(iii) apply.  
Thus, for every $\varepsilon>0$ there exist $U_\varepsilon$ and $C_\varepsilon>0$
such that the auxiliary iteration
\[
  \tilde\bfu^{n+1}=\tilde\Phi_\sigma(\tilde\bfu^n),
  \qquad \tilde\bfu^0=\bfu^0,
\]
satisfies
\begin{equation}
  \|\tilde\bfu^{n}-\bfu\|_{\bfH^1(\D)}
  \;\le\;
  C_\varepsilon(\rho+\varepsilon)^n
  \|\bfu^0-\bfu\|_{\bfH^1(\D)}.
  \label{eq:aux-decay}
\end{equation}
By Lemma~\ref{lem:aux-it}, for all $n\ge1$ the auxiliary iteration is linked to the original iteration by  
\begin{align*}
  \tilde\bfu^n=e^{\ci \omega_n} \,\bfu^n,
  \qquad \mbox{for some }  \omega_n\in \R.
\end{align*}
Hence
\begin{align*}
  \dist_{S^1}^{\bfH^1}(\bfu,\bfu^n)
  = \inf_{\omega \in \R}\| e^{\ci \omega}\bfu^n-\bfu\|_{\bfH^1}
  \le \|\tilde\bfu^n-\bfu\|_{\bfH^1}.
\end{align*}
Together with \eqref{eq:aux-decay} this yields
\begin{align*}
  \dist_{S^1}^{\bfH^1}(\bfu,\bfu^n)
  \le
  C_\varepsilon(\rho+\varepsilon)^n
  \|\bfu^0-\bfu\|_{\bfH^1(\D)},
\end{align*}
which is the desired local linear convergence up to phase.

\medskip
Convergence of the densities follows because $|\tilde\bfu^n|=|\bfu^n|$, hence 
\[
  \||\bfu^n|^2-|\bfu|^2\|_{\bfL^2}
  =\||\tilde\bfu^n|^2-|\bfu|^2\|_{\bfL^2}
  \le C\,\|\tilde\bfu^n-\bfu\|_{\bfH^1},
\]
where the inequality follows from Sobolev embedding and boundedness of
$(\tilde\bfu^n)_{n\in \N}$ (as in Lemma~\ref{lem:convergence-aux}).  
Applying \eqref{eq:aux-decay} again gives
\begin{align*}
  \||\bfu^n|^2-|\bfu|^2\|_{\bfL^2(\D)}
  \le
  C_\varepsilon(\rho+\varepsilon)^n
  \|\bfu^0-\bfu\|_{\bfH^1(\D)}.
\end{align*}
This completes the proof.
\end{proof}

\section{Numerical experiments}\label{section:7}
In this section, we demonstrate the accelerated local convergence of the $J$-method in conjunction with a globally convergent method. We compare it to the (globally convergent) $A$-method for this two-component setting. All our calculations were carried out using \(\texttt{julia}\). 

In the following, we set our computational domain to be \(\D = [-1,1]^2\) with the parameters
 \[\beta_{11} = 10,\,\, \beta_{12}=\beta_{22} = 9,\, \,\Omega = 50 \,\text{   and  }\,\delta = 0.\] 
The zero boundary condition on $\partial \D$ can be interpreted as box trapping potentials that confine the condensate in $\D$, i.e., 
 \begin{align*}
 \tilde V_j(x) = \begin{cases}
  	0, &x\in\D\\
  	\infty,  &x\in \mathbb{R}^d\setminus \D
  \end{cases}\, \text{ for }\, j=1,2.
 \end{align*}
We will consider the cases \(k_0=10\) and \(k_0=50\). \\
To ensure that the \(A\)-operator induces an inner product, we shift the energy up by modifying the trapping potentials (according to assumption \ref{A2}) to be 
  \begin{align*}
  V_j(x) =\tilde V_j(x) +  \frac{\Omega + \delta + 2k_0^2}{2} \, \text{ for }\, j=1,2.
  \end{align*}
  The iterative method utilizes the following functions as initial values:
\begin{align*}
	u_1^0(x_1,x_2) &= \frac{1}{2}(x_1-1)^2(x_2-1)^2\exp(-\frac{x_1^2 + x_2^2}{2}\ci)\\
	u_2^0(x_1,x_2) &= (x_1-1)^2(x_2-1)^2\exp(-\frac{x_1^2 + x_2^2}{2}\ci).
\end{align*}
The same experiment setup for the \(A\)-method was already introduced in \cite{HHP25}. 
An accurate reference solution was computed with a combination of  $A$ and \(J\)-method and a very small tolerance (subject to the reduction of energy in two successive iterations of order \(10^{-17}\)). The reference energy is approximately \(E_\text{ref} = 38.2143142275451\), compared to an initial energy evaluation of \(E({\bf u}^0) = 74.97448979636732\). The reference ground state eigenvalue is computed with $\lambda = 78.3762227332673$. 
In the following, we will compare the results from the globally convergent $A$-method with several different combinations of \(A\)- and \(J\)-method. 
In the experiments conducted, we have discretized the \(\bfH_0^1(\D)\) space with Lagrange finite elements, where we subdivide the square domain into a uniform mesh with \(N=2^8\) subintervals on both the \(x\)- and \(y\)-axis. For \(\mathbb{P}^1\) finite elements, this yields \((N+1)^2\!-\!4N\) and for \(\mathbb{P}^2\) finite elements \((2N+1)^2\!-\!8N\) degrees of freedom. As already mentioned, the \(A\)-method will be used first to guarantee that the starting value for the \(J\)-method is sufficiently close to a ground state to ensure convergence. We will refer to the \(A\)-method applied in the \(\mathbb{P}^1\) and \(\mathbb{P}^2\) spaces as \emph{\(A1\)-} and \emph{\(A2\)-method} respectively. Similarly, the \(J\)-method in the \(\mathbb{P}^2\)-space will be referred to as \emph{\(J2\)-method}.
\subsection{The case \(k_0=10\)}

As a first step, we will compare a combination of \(A2\text{-}J2\) to simply using the \(A2\)-method. The switch from \(A2\) to \(J2\) occurs as soon as the energy difference between two subsequent iterations \(\varepsilon_{A2,\text{diff}}\) falls below the threshold of \(10^{-4}\). For the \(J\)-method, we consider both a fixed spectral shift \(\sigma\) and a spectral shift that is recomputed in every iteration. For both the \(A2\text{-}J2\)- and \(A2\)-method, the iterations are stopped when the difference \(\varepsilon\) of the energy evaluation compared to the reference energy  falls below \(10^{-12}\).\\
It is worth mentioning that for the \(J\)-method with an adaptively chosen spectral shift, the shift was fixed after two iterations - in this case the shift was \(\sigma = 78.37768735111166\), which is fairly close to the target eigenvalue. Fixing the shift mitigates the issue of inverting matrices that are too close to singular. In the case of the adaptively computed shift, the iteration finished after the third iteration.

Table \ref{table-energy-final-iteration} shows the final results for the different methods. After \(124\) iterations of the \(A2\)-method, we switch to the \(J2\)-method and compare the results for an adaptively chosen or fixed spectral shift. Evidently, the adaptive \(A2\text{-}J2\) not only requires fewer iterations by a significant margin, but it also yields the lowest energy out of all of the methods.\\
\begin{table}[H]
	\begin{tabular}{ |p{3cm}||p{3.5cm}|p{3.5cm}|p{3.5cm}|  }
		\hline
		Method& No. of iterations & Final energy & Final eigenvalue\\
		\hline
		A2 &1871 &38.2143142276216423&78.37622294541053\\
		A2-J2 (adaptive)        & 124 + 3   &38.214314227545884   &  78.3762227332673\\
		A2-J2 (fixed) & 124 +  6   & 38.21431422754608  &78.37622272798737\\
		\hline
	\end{tabular}%
	\caption {Comparison of final results for \(k_0=10\).}
	\label{table-energy-final-iteration}
\end{table}
 Due to memory constraints occuring while inverting the \(J\)-operator, each iteration of the \(J2\)-method takes longer compared to one iteration of the \(A2\)-method. A direct inversion can quickly lead the program to crash, even for a relatively small number of degree of freedoms. Instead of a direct inversion, we instead employ the Woodbury-formula, which requires each iteration to solve three linear systems of equations (as opposed to one system of linear equations for the \(A\)-method). Still, the drastic reduction in iterations means that the \(J2\)-method by far outperforms the sole usage of the \(A2\)-method, even in terms of time. A note on the implementation is included in the appendix.

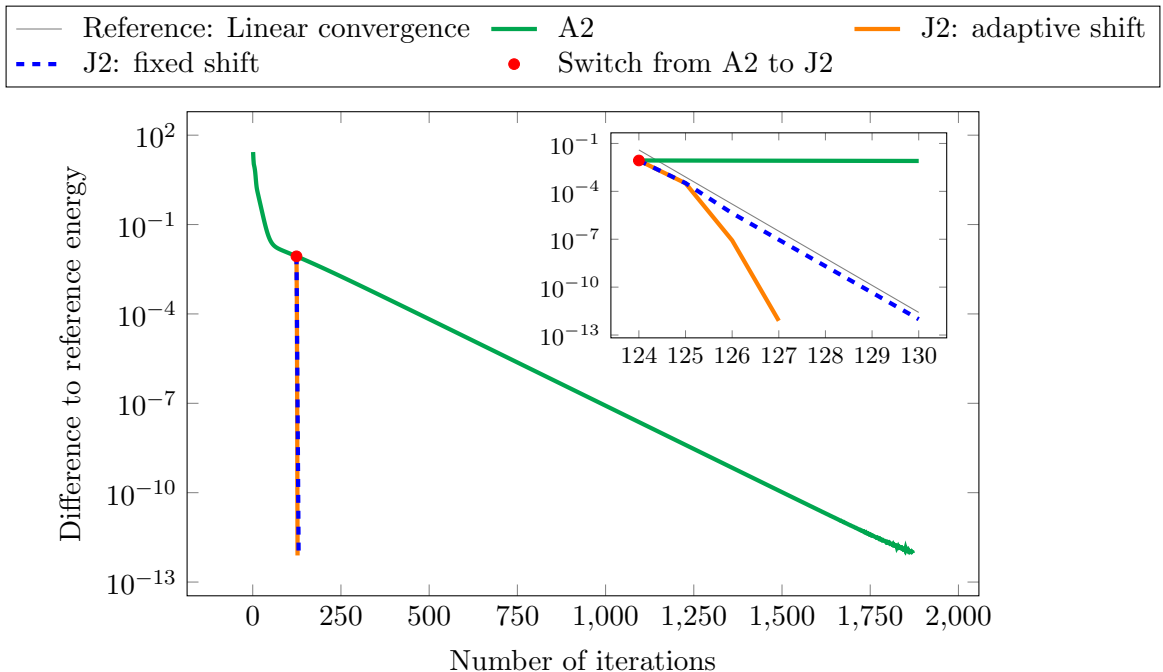
\begin{figure}[H]
	\begin{tikzpicture}
		\begin{axis}[ymode=log, 
			restrict x to domain = 0:1900, 
			xtick={0, 250, 500, 750, 1000, 1250, 1500, 1750,2000}, 
			width=12cm, height = 8cm,
			xlabel={Number of iterations},                     
			ylabel={Difference to reference energy},
					legend style={
				at={(0.5, 1.05)},
				anchor=south, 
				legend columns=3,		
				column sep=5pt,
				legend cell align=left,
			},
			legend entries={%
				Reference: Linear convergence,
				A2,
				J2: adaptive shift,
				J2: fixed shift,
				Switch from A2 to J2
			},
			]
			\addplot[solid, gray, thin, restrict x to domain=125:130,]  file {fig1_4_reference_rate.txt};
			\addplot[solid, green(pigment), ultra thick]  file {fig1_1_Energy_diff_A2.txt};			
			\addplot[solid, orange, ultra thick]  file {fig1_3_Energy_diff_adaptive_A2.txt};
			\addplot[dashed, blue,ultra thick]  file {fig1_2_Energy_diff_fixed_A2.txt};
			\addplot[red, only marks, mark=*] coordinates {(124, 0.008583659401807608)}; 
			\coordinate (insetPosition) at (rel axis cs:0.96,0.46);
		\end{axis}
		
		\begin{axis}[
			ymode=log,
			at={(insetPosition)},
			anchor={outer south east},
			footnotesize, width=6cm, height = 4.3cm,
			restrict x to domain=124:130,
			ytick={1e-1, 1e-4, 1e-7, 1e-10, 1e-13}
			]
			\addplot[solid, gray, thin]  file {fig1_4_reference_rate.txt};
			\addplot[solid, green(pigment), ultra thick]  file {fig1_1_Energy_diff_A2.txt};
			\addplot[solid, orange, ultra thick]  file {fig1_3_Energy_diff_adaptive_A2.txt};
			\addplot[dashed, blue, ultra thick]  file {fig1_2_Energy_diff_fixed_A2.txt};			
			
			\addplot[red, only marks, mark=*] coordinates {(124, 0.008583659401807608)}; 
		\end{axis}
	\end{tikzpicture}
	\caption{Difference to reference energy in each iteration for \(k_0=10\).}
	\label{fig2_diff_reference_energy_k0_10}
\end{figure}
Figure \ref{fig2_diff_reference_energy_k0_10} depicts the difference to the reference energy for all the methods. For a fixed \(\sigma\), we can observe the locally linear convergence. The adaptive shift demonstrates superlinear convergence.

\begin{figure}[H]
	\centering
	\includegraphics[width = 0.8\textwidth]{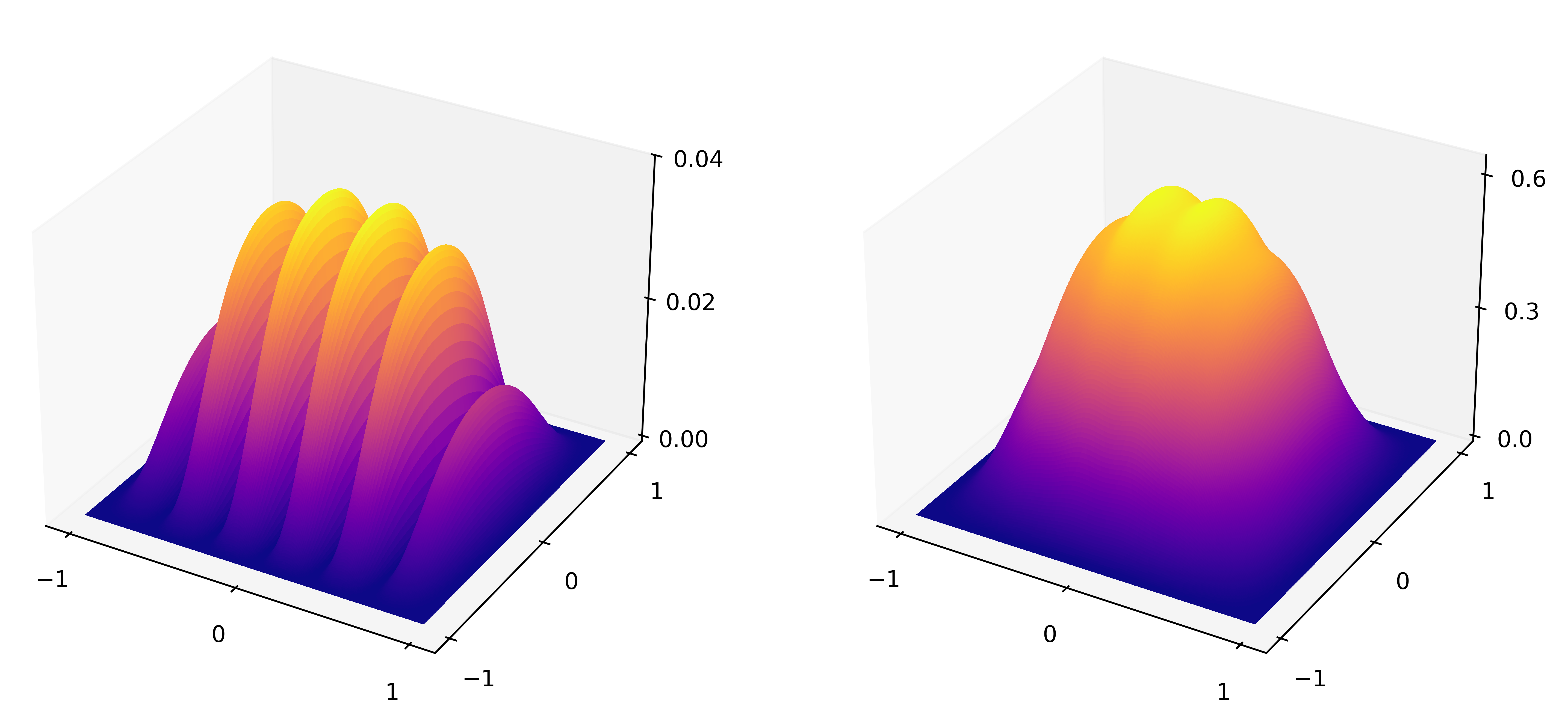}
	\caption{Density of first and second component of ground state for \(k_0=10\).}
	\label{fig3_density_k0_10}
\end{figure}
The final ground state density for each of the components can be observed in Figure \ref{fig3_density_k0_10}. The final eigenvalue approximation taken from the \(J\)-method evaluates to \(\lambda_J = 78.3762227332673\). 
According to the second order condition \ref{A4}, the ground state eigenvalue should appear at the bottom of the spectrum of \(E''(\bfu)_{\mid{T_{\bfu}\S}}\).
\begin{table}[h] 
	\begin{tabular}{ |p{4cm}||p{3.3cm}|p{3.3cm}|p{3.3cm}|  }
		\hline
		$i$& 1 & 2& 3\\
		\hline
		\(i\)-th smallest eigenvalue &78.37622273329991 &78.63666778751758&78.65117937375528\\
		\hline
	\end{tabular}
	\caption{The three smallest eigenvalues of \(E''(\bfu)|_{T_{\bfu}\S}\) sorted in ascending order for \(k_0=10\).}
	\label{table1_A2J2}
\end{table}
Table \ref{table1_A2J2}  shows the three smallest eigenvalues of \(E''(\bfu)|_{T_{\bfu}\S}\) respectively, which are in agreement with our assumption. 
\subsection{The case \(k_0=50\)}
We also performed the experiment with the same setup for \(k_0=50\). This parameter is responsible for the appearance of the "disks" in the density, which are typical for supersolids. The disks represent alternating regions of high and low density and for an increase in \(k_0\) (where we keep all the other parameters fixed), the number of disks in the density increases. This makes the computation of the ground state extremely challenging on fine meshes. For this example, we compare the performance of first using the \(A2\)-method and switching to the \(J2\)-method after \(\varepsilon_{A2,\text{diff}}\) falls below \(10^{-4}\) to first using the \(A1\)-method (as iterations are significantly cheaper in the $\mathbb{P}^1$-space), switching to \(A2\) when  \(\varepsilon_{A1,\text{diff}}\leq10^{-4}\) by interpolating the \(\mathbb{P}^1\) ground state approximation into the \(\mathbb{P}^2\) space and finally switching to the \(J2\)-method with adaptive shifting as soon as \(\varepsilon_{A2,\text{diff}}\leq10^{-4}\). The \(J2\)-iteration is stopped when the difference to the reference energy reaches \(10^{-12}\). As before, we fix the computation of the spectral shift after a few iterations -  in this case, after \(5\) iterations.  \\
The initial energy evaluates to \(E({\bf u}^0) = 1049.7835244948673790\). Figure \ref{fig4_diff_k0_50} shows the difference to the reference energy \(E_\text{ref} = 639.7825263242673\) in each iteration for both of the methods, while Table \ref{table_k0_50} again presents the final results.
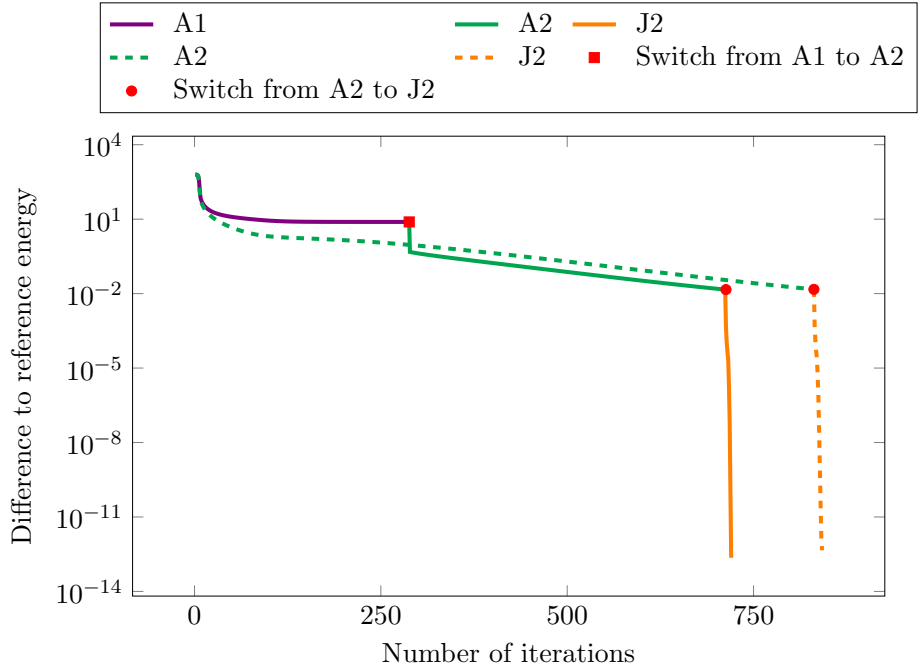
\begin{figure}[H]
	\begin{tikzpicture}[scale=0.95]
		\begin{axis}[ymode=log, 
			restrict x to domain = 0:1900, 
			xtick={0, 250, 500, 750, 1000, 1250, 1500, 1750,2000}, 
			width=12cm, height = 8cm,
			xlabel={Number of iterations},                     
			ylabel={Difference to reference energy},
		legend style={
	at={(0.5, 1.05)},
	anchor=south, 
	legend columns=3,		
	column sep=5pt,
	legend cell align=left,
},
			legend entries={%
				A1,
				A2,
				J2,
				A2,
				J2,
				Switch from A1 to A2,
		  		Switch from A2 to J2
			},
			]
			\addplot[solid, violet, ultra thick]  file {fig3_1_Energy_diff_A1_50.txt};			
			\addplot[solid, green(pigment), ultra thick]  file {fig3_2_Energy_diff_A2_50.txt};
			\addplot[solid, orange, ultra thick]  file {fig3_3_Energy_diff_J2_50.txt};
			\addplot[dashed, green(pigment), ultra thick]  file {fig3_4_Energy_diff_A2J2_A2_50.txt};
			\addplot[dashed, orange, ultra thick]  file {fig3_5_Energy_diff_A2J2_J2_50.txt};
				\addplot[red, only marks, mark=square*] coordinates {(288, 7.736979312055382)}; 
			\addplot[red, only marks, mark=* ] coordinates {(831, 0.014853246016173216)}; 
			\addplot[red, only marks, mark=* ] coordinates {(713, 0.014531632359876312)}; 
			\coordinate (insetPosition) at (rel axis cs:0.96,0.46);
		\end{axis}
	\end{tikzpicture}
	\caption{Difference to reference energy in each iteration for \(k_0=50\). The solid lines represent the iterations for the A1-A2-J2 version, while the dashed line represents the A2-J2 case.}
	\label{fig4_diff_k0_50}
\end{figure}

\begin{table}[h]
	\begin{tabular}{ |p{3cm}||p{3.5cm}|p{3.5cm}|p{3.5cm}|  }
		\hline
		Method& No. of iterations & Final energy & Final eigenvalue\\
		\hline
		A2-J2 & 831 + 12& 639.7825263242685&1281.531474207175\\
		A1-A2-J2      & 288 + 425 + 9  & 639.7825263242671  &  1281.5314742069918\\
		\hline
	\end{tabular}%
	\caption {Comparison of final results for \(k_0=50\)}
		\label{table_k0_50}
\end{table}
Surprisingly, the combination of \(A1\)-, \(A2\)- and \(J2\)-method outperforms the \(A2\)-\(J2\)-method by quite a few iterations, while also providing the smallest final energy. The final eigenvalue approximation of the \(A1\text{-}A2\text{-}J2\)-method is \(\lambda = 1281.5314742069918\). The three smallest eigenvalues of \(E''(\bfu)|_{T_{\bfu}\S}\) are again depicted in the following table:\\
\begin{table}[H] 
	\begin{tabular}{ |p{3.7cm}||p{3.6cm}|p{3.6cm}|p{3.6cm}|  }
		\hline
		$i$& 1 & 2& 3\\
		\hline
		\small{\(i\)-th smallest eigenvalue} &1281.53147420702635 &1281.53458849140316&1281.53987599385687\\
		\hline
	\end{tabular}
	\caption{The three smallest eigenvalues of \(E''(\bfu)|_{T_{\bfu}\S}\) sorted in ascending order for \(k_0=50\).}
\end{table}
\begin{figure}[H]
	\centering
	\includegraphics[width = 0.8\textwidth]{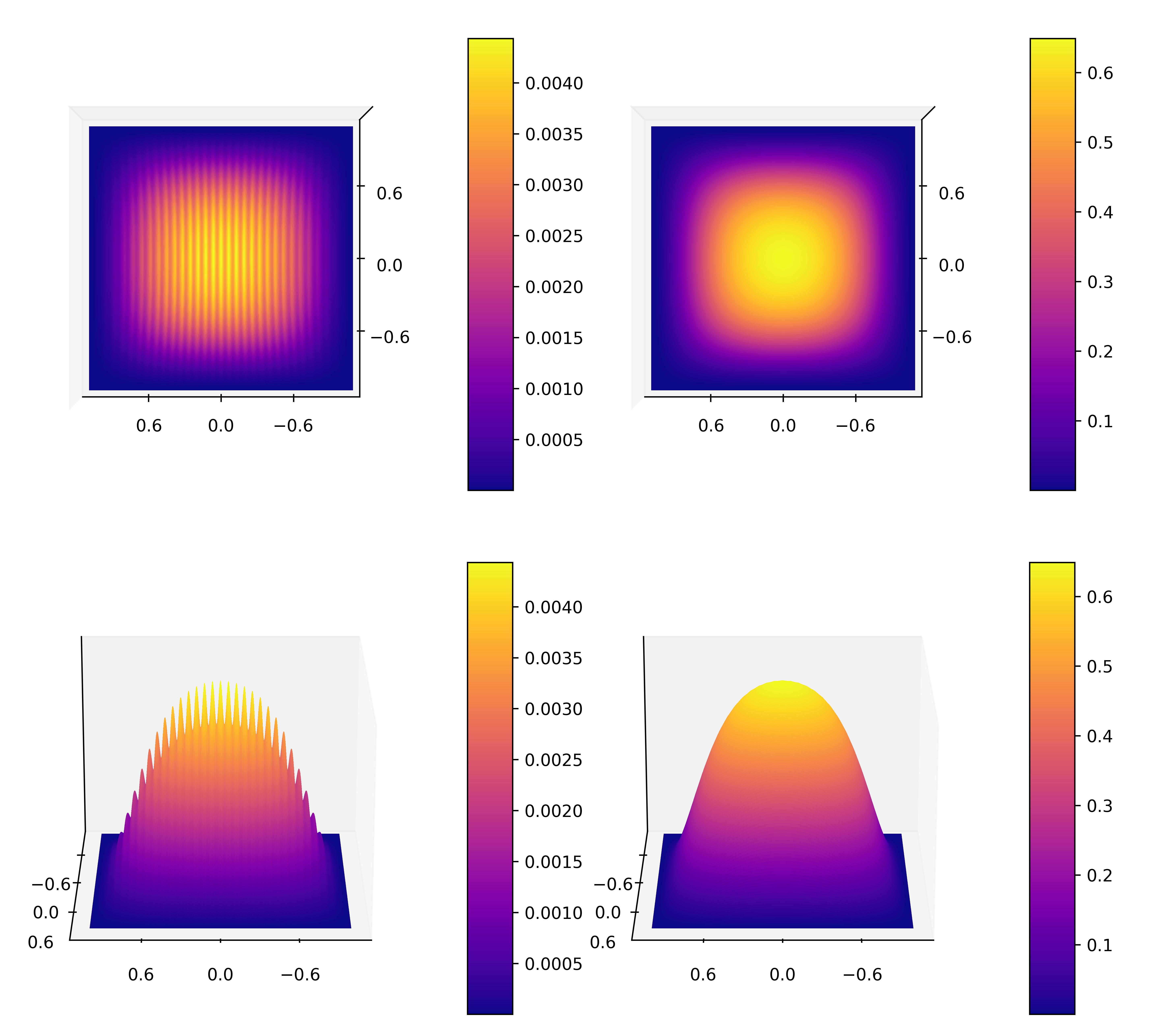}
	\caption{Density of first and second component of ground state for \(k_0=50\) from a bird's view perspective (top row) and side view (bottom row).}
	\label{fig5_density_k0_50}
\end{figure}
Figure \ref{fig5_density_k0_50} depicts the density of the ground state of both components. We observe a significant increase in disks in the first component, while the second component remains smooth.\\
We conclude that the \(J\)-method offers a significant speed-up in the computation of ground states when used in conjunction with a globally converging scheme. The findings verify linear convergence for a fixed spectral shift \(\sigma\). Additionally, choosing an adaptive spectral shift appears to gives us superlinear convergence. Proving superlinear convergence of the $J$-method with adaptive shifting is left for future works.\vspace{-0.3cm}

\begin{appendix}
\section{Notes on the implementation}
Recall that we discretize the \(\bfH_0^1(\D)\) space using either \(\mathbb{P}^1\) or \(\mathbb{P}^2\) triangular Lagrange finite elements with uniform mesh size. For the implementation, we evaluate the initial functions on the mesh and use this as the starting vector. For notational clarity, we will omit the superscript denoting the current iteration and will instead refer to the two components of the current iteration as \(u_j\) for \(j=1,2\) and set \(\bfu=(u_1,u_2)\), which represents the current ground state approximation.  Each component is split into its real and imaginary part respectively, i.e., \(u_j= (u_{j\text{R}}, u_{j{\text{I}}})\) for \(j=1,2\). Using \(\mathbb{P}^2\) finite elements where we have divided the domain into \(n\) equidistant subintervals on each axis, this yields \(N:=(2n+1)^2-8n\) degrees of freedom and thus \(u_{j\text{R}}, u_{j_\text{I}}\in \mathbb{R}^N\), hence \(\bfu\in\mathbb{R}^{4N}\). To discretize the \(\calJ{\bfu}\)-operator, let \(\varphi_i\) define the nodal basis functions of the triangulation for \(i=1,\dots,N\). We define the following block matrices:
\begin{itemize}
	\item Stiffness matrix \(S = \left(\int_\D \nabla \varphi_i \cdot \nabla \varphi_j\dx\right)_{1\leq i,j\leq N}\).
	\item Mass matrix \(M = \left(\int_\D  \varphi_i \varphi_j\dx\right)_{1\leq i,j\leq N}\).
	\item Potential matrix \(P_k = \left(\int_\D V_j \varphi_i \varphi_j\dx\right)_{1\leq i,j\leq N}\) for \(k=1,2\).
	\item Partial derivative term \(L = \left(\int_\D  \varphi_i \partial_{x_1}\varphi_j\dx\right)_{1\leq i,j\leq N}\).
	\item Weighted mass matrix, representing the nonlinear terms \(M_{|u_k|^2}= \left(\int_\D  |u_k|^2\varphi_i \varphi_j\dx\right)_{1\leq i,j\leq N}\) for \(k=1,2\).
\end{itemize}
Furthermore, to account for the splitting of the components into their real and imaginary parts, define the following matrices:
\begin{alignat*}{3}
	\boldsymbol{S} &= \begin{pmatrix}
		S &0\\ 0 &S
	\end{pmatrix}\in \mathbb{R}^{2N\times 2N}, \quad
	\boldM &&= \begin{pmatrix}
		M &0\\ 0 &M
	\end{pmatrix}, \quad 
	\boldsymbol{P}_k &&= \begin{pmatrix}
	P_k &0\\ 0 &P_k
	\end{pmatrix}\\
	\boldsymbol{L} &= \begin{pmatrix}
		0 &-L\\L &0
	\end{pmatrix},\quad \quad \quad
	\boldM_{|u_k|^2}  &&= \begin{pmatrix}
		M_{|u_k|^2} &0\\ 0 &M_{|u_k|^2}
	\end{pmatrix}.
\end{alignat*}
The discretization of the \(\langle\calA{u}v,w\rangle\)- operator then reads
\[\tiny\boldsymbol{A} = \begin{pmatrix}
	\dfrac{1}{2}\boldsymbol{S} + \boldsymbol{P}_1 + \dfrac{\delta}{2}\boldM + k_0\boldsymbol{L} + \beta_{11}\boldM_{|u_1|^2} + \beta_{12}\boldM_{|u_2|^2}
	&\dfrac{\Omega}{2}\boldM\\
	\dfrac{\Omega}{2}\boldM
	& \dfrac{1}{2}\boldsymbol{S} + \boldsymbol{P}_2 - \dfrac{\delta}{2}\boldM - k_0\boldsymbol{L} + \beta_{12}\boldM_{|u_1|^2} + \beta_{22}\boldM_{|u_2|^2}
\end{pmatrix}\in \mathbb{R}^{4N\times 4N}.\]
For the discretization of the term \(I_1 =\Re\!\int_\D 
\sum_{j=1}^2
\frac{\hspace{-23pt}\beta_{jj}}{\|\bfu\|_{\bfL^2(\D)}^2}
\big(u_j\conjv{j}+v_j\conju{j}\big)u_j\conjw{j}\, \dx \), we define the following weighted mass matrices:
\begin{alignat*}{2}
M_{u^2_{k\text{R}}} &= \left(\int_D u^2_{k\text{R}}\varphi_i\varphi_j\dx\right)_{1\leq i,j\leq N}, \quad
M_{u^2_{k\text{I}}} &&= \left(\int_D u^2_{k\text{I}}\varphi_i\varphi_j\dx\right)_{1\leq i,j\leq N}, \\
M_{u_{k\text{R}}u_{k\text{I}}} &= \left(\int_D u_{k\text{R}}u_{k\text{R}}\varphi_i\varphi_j\dx\right)_{1\leq i,j\leq N},
\end{alignat*}
for \(k=1,2\), as well as
\[
M_{u_{1\text{R}}u_{2\text{I}}} = \left(\int_D u_{1\text{R}}u_{2\text{R}}\varphi_i\varphi_j\dx\right)_{1\leq i,j\leq N}, \quad
M_{u_{2\text{R}}u_{1\text{I}}} = \left(\int_D u_{2\text{R}}u_{1\text{R}}\varphi_i\varphi_j\dx\right)_{1\leq i,j\leq N}.
\]
The matrix representation of \(I_1\) is then given by
\begin{align*}
	&\beta_{11}\,\begin{pmatrix}
		M_{u_{1\text{R}}^2} & M_{u_{1\text{R}}u_{1\text{I}}} & 0 & 0\\
		M_{u_{1\text{R}}u_{1\text{I}}} &	M_{u_{1\text{I}}^2} &0 &0\\
		0&0&0&0\\
		0&0&0&0
	\end{pmatrix}
	+\beta_{22}\,\begin{pmatrix}
		0 &0 &0 &0\\
		0&0&0&0\\
		0&0 &	M_{u_{2\text{R}}^2} & M_{u_{2\text{R}}u_{2\text{I}}}\\
		0&0&M_{u_{2\text{R}}u_{2\text{I}}} &	M_{u_{2\text{I}}^2} 
	\end{pmatrix}\\
	\,+&\beta_{12}\,\begin{pmatrix}
		0&0 &M_{u_{2\text{R}}u_{1\text{R}}} &M_{u_{1\text{R}}u_{2\text{I}}}\\
		0&0 &M_{u_{2\text{R}}u_{1\text{I}}} &M_{u_{2\text{I}}u_{1\text{I}}}\\
		M_{u_{1\text{R}}u_{2\text{R}}} &M_{u_{2\text{R}}u_{1\text{I}}} &0 &0\\
		M_{u_{1\text{R}}u_{2\text{I}}} &M_{u_{1\text{I}}u_{2\text{R}}} &0 &0
	\end{pmatrix}\in \mathbb{R}^{4N\times 4N}.
\end{align*}
The term \(I_2=	- 2\,\frac{(\bfu,\bfv)_{\bfL^2(\D)}}{\|\bfu\|_{\bfL^2(\D)}^4}\,
\Re\!\int_\D
\Big(
\sum_{j=1}^2 \beta_{jj}|u_j|^2 u_j\conjw{j}
+ \beta_{12}\big(|u_1|^2 u_2\conjw{2} + |u_2|^2 u_1\conjw{1}\big)
\Big)\dx\) presents some computational challenges.
 Let
 {\scriptsize \begin{align*}
 				\boldU&= -2\begin{pmatrix}
 						(\beta_{11}M_{|u_1|^2} + \beta_{12}M_{|u_2|^2})u_{1\text{R}} &0\\
 						(\beta_{11}M_{|u_1|^2} + \beta_{12}M_{|u_2|^2})u_{1\text{I}} &0\\
 						0 & (\beta_{22}M_{|u_2|^2} + \beta_{12}M_{|u_1|^2})u_{2\text{R}}\\\vspace{2mm}
 						0 & (\beta_{22}M_{|u_2|^2} + \beta_{12}M_{|u_1|^2})u_{2\text{I}}
 					\end{pmatrix}\in \mathbb{R}^{4N\times 2} ,\\
 					 \boldV&=
 				\begin{pmatrix}
 						u_{1\text{R}}^\top M & u_{1\text{I}}^\top M & u_{2\text{R}}^\top M &u_{2\text{I}}^\top M\\
 						u_{1\text{R}}^\top M & u_{1\text{I}}^\top M & u_{2\text{R}}^\top M &u_{2\text{I}}^\top M
 					\end{pmatrix}\in \mathbb{R}^{2\times 4N}.
 		\end{align*}}
 		The discretized form of \(I_2\) can then be represented by the rank-two update \(\boldU\cdot \boldV\). Let \(\boldsymbol{B}\) refer to the matrix representing the discretized \(J\)-operator without the inverse quartic terms and define \[\boldM_\text{padded} =
 		\begin{pmatrix}
 			\boldM &0\\ 0 &\boldM
 		\end{pmatrix}
 		\in\mathbb{R}^{4N\times 4N}.\]
 		Then the discretization of the shifted \(J\)-operator \(\calJsigma{\bfu}\) is \(\boldC+ \boldU\boldV\) for \(\boldC := \boldsymbol{B} - \sigma \boldM_\text{padded}\). This term is computationally  very expensive if treated in a direct way, even for a relatively small number of degree of freedoms. However, our algorithm only requires the computation of \( \calJsigma{\bfu}^{-1}\mathcal{I}\bfu\), or in discrete form \((\boldC+ \boldU\boldV)^{-1}\boldM\bfu\). This can be computed cheaply with the Woodbury formula, a generalization of the Sherman-Morrison formula \cite{LG46}. An application of said formula yields
 		\begin{align*}
 		(\boldC + \boldU\boldV)^{-1}\boldM_\text{padded}\bfu &= 
 	\boldC^{-1}\boldM_\text{padded}\bfu \,-\, 
 		\boldC^{-1}\boldU
 			\left(
 			\begin{pmatrix}
 				1 &0 \\ 0 &1
 			\end{pmatrix} + \boldV\boldC^{-1}\boldU
 			\right)^{-1}\boldV\boldC^{-1}\boldM_\text{padded}\bfu.
 		\end{align*}
This means that in each iteration, \textit{three} terms involving matrix inversions have to be calculated, namely: \,\,\(z_1 :=\boldC^{-1}\boldM_\text{padded}\bfu\), \,\,\, \(\boldsymbol{Z}:=\boldC^{-1}\boldU\), \,\,\, \(z_2 :=\left(
 \begin{pmatrix}
 	1 &0 \\ 0 &1
 \end{pmatrix} + \boldV z_2
 \right)^{-1}\boldV z_1\).\\[0.4em]  
This yields $(\boldC + \boldU\boldV)^{-1}\boldM_\text{padded}\bfu = z_1 - \boldsymbol{Z}z_2$. 
\end{appendix}
\end{document}